\newcommand{\abs}[1]{|#1|}
\newcommand{\Norm}[2]{\|#1\|_{#2}}
\newcommand{\ave}[1]{\langle #1\rangle}
\newcommand{\BMO}[0]{\operatorname{BMO}}
\newcommand{\CMO}[0]{\operatorname{CMO}}
\newcommand{\supp}[0]{\operatorname{supp}}
\DeclareMathOperator*{\esssup}{ess\;sup}
\DeclareMathOperator*{\essinf}{ess\;inf}
\newcommand{\R}{\mathbb{R}}
\newcommand{\C}{\mathbb{C}}
\newcommand{\N}{\mathbb{N}}
\newcommand{\Z}{\mathbb{Z}}
\newcommand{\dd}{\,\mathrm{d}}
\swapnumbers \numberwithin{equation}{section}
\theoremstyle{plain}
\newtheorem{theorem}[equation]{Theorem}
\newtheorem{proposition}[equation]{Proposition}
\newtheorem{lemma}[equation]{Lemma}
\theoremstyle{definition}
\newtheorem{definition}[equation]{Definition}
\newtheorem{remark}[equation]{Remark}
 \def\@textbottom{\vskip \z@ \@plus 1pt}
 \let\@texttop\relax
\begin{document}

\title[Extrapolation of compactness]{Extrapolation for bilinear compact operators in the variable exponent setting}

\author{Spyridon Kakaroumpas}
\address{Institut f\"{u}r Mathematik, Julius-Maximilians-Universit\"{a}t
W\"{u}rzburg, Emil-Fischer-Strasse 41, 97074 W\"{u}rzburg, Germany}
\email{spyridon.kakaroumpas@uni-wuerzburg.de}

\author{Stefanos Lappas}
\address{Department of Mathematical Analysis, Faculty of Mathematics and Physics, Charles University, Sokolovsk\'a 83, 186 75 Praha 8, Czech Republic}
\email{stefanos.lappas@matfyz.cuni.cz; vlappas@hotmail.com}

\thanks{S. L. was supported by the Primus research programme PRIMUS/21/SCI/002 of Charles University and the Foundation for Education and European Culture, founded by Nicos and Lydia Tricha.}


\keywords{Weighted extrapolation, multilinear variable weights, compact operators, Calder\'{o}n--Zygmund operators, fractional integral operators, commutators}
\subjclass[2020]{Primary: 42B20, 42B35; Secondary: 46B70, 47H60}



\begin{abstract}
We establish extrapolation of compactness for bilinear operators in the scale of weighted variable exponent Lebesgue spaces. First, we prove an abstract principle relying on the Cobos--Fern\'{a}ndez-Cabrera--Mart\'{i}nez theorem. Then, as an application we deduce new compactness results for the commutators of bilinear $\omega$-Calder\'{o}n--Zygmund operators, bilinear fractional integrals and bilinear Fourier multipliers acting on weighted variable exponent Lebesgue spaces. Our work extends and unifies among others earlier works of the second named author together with Hyt\"{o}nen as well as Oikari.
\end{abstract}

\maketitle


\section{Introduction}

This paper brings together several areas: weighted estimates for singular integrals, multilinear operators, variable exponent Lebesgue spaces, and extrapolation of boundedness, respectively compactness. Let us briefly recall the history of each of the involved areas.

It is a classical result that if $T$ is a Calder\'{o}n--Zygmund operator on $\R^n$, $1<p<\infty$, and $w$ is a weight (meaning an a.e.~positive function) on $\R^n$, such that the so called Muckenhoupt $A_p$ characteristic
\begin{equation*}
    [w]_{A_p} := \sup_{Q}\left(\frac{1}{|Q|}\int_{Q}w(x)^{p}\,\mathrm{d}x\right)^{1/p}\left(\frac{1}{|Q|}\int_{Q}w(x)^{-p'}\,\mathrm{d}x\right)^{1/p'}
\end{equation*}
is finite, where the supremum is taken over all cubes $Q\subset\R^n$ (with faces parallel to the coordinate hyperplanes) and $\frac{1}{p}+\frac{1}{p'}=1$, then $T$ acts boundedly from $L^{p}(w)$ into itself. Here we denote \begin{equation*}
    \Vert f\Vert_{L^{p}(w)} := \left(\int_{\R^n}|f(x)w(x)|^{p}\,\mathrm{d}x\right)^{1/p}.
\end{equation*}
Such boundedness results and quantitative versions of them are often proved more easily for $p=2$, among other reasons, because $L^2(w)$ is a Hilbert space. To extend them to the whole range $1<p<\infty$, one needs an \emph{extrapolation} result. In this case, the relevant method was introduced by Rubio de Francia \cite{Rubio_de_Francia_1984}. Several works followed further developing this method, until Duoandikoetxea \cite{Duoandikoetxea_2011} proved a sharp version of it. We refer to the book \cite{Book_Extrapolation} for a thorough historical overview as well as detailed proofs and generalizations.

There exist several directions for studying more general problems than the above mentioned boundedness. One consists in considering multilinear Calder\'{o}n--Zygmund operators. Lerner, Ombrosi, P\'erez, Torres and Trujillo-Gonz\'alez \cite{LOPTT09} showed that if $T$ is a $m$-linear Calder\'{o}n--Zygmund operator on $\R^n$, $\vec{p}=(p_1,\ldots,p_m)$ is a $m$-tuple of exponents from $(1,\infty)$, and $\vec{w}=(w_1,\ldots,w_m)$ is a $m$-tuple of weights such that
\begin{equation*}
    [\vec{w}]_{A_{\vec{p}}} :=
    \sup_{Q}\left(\frac{1}{|Q|}\int_{Q}\prod_{j=1}^{m}w_j(x)^{p}\,\mathrm{d}x\right)^{1/p}\prod_{j=1}^{m}\left(\frac{1}{|Q|}\int_{Q}w_j(x)^{-p_j'}\,\mathrm{d}x\right)^{1/p_j'}
\end{equation*}
is finite, where
\begin{equation*}
    \frac{1}{p} = \sum_{j=1}^{m}\frac{1}{p_j},
\end{equation*}
then $T$ maps $L^{p_1}(w_1)\times\cdots\times L^{p_m}(w_m)$ into $L^{p}(\prod_{j=1}^{m}w_j)$ boundedly. Extrapolation results of such boundedness have played here also a key role in the further development of the theory, see \cite{Nieraeth_2019}.

Another direction consists in considering weighted Lebesgue spaces $L^{p(\cdot)}(w)$ with a variable exponent $p(\cdot)$, that means $p(\cdot)$ is now a not necessarily constant function $p(\cdot):\R^n\to[0,\infty]$. For the precise definition of these spaces we refer to Section~\ref{sec: preliminaries}. An exhaustive presentation of the unweighted variable exponent Lebesgue spaces $L^{p(\cdot)}(\R^n)$ can be found in \cite{CF2013, DHHR2011}. Cruz-Uribe, Fiorenza and Neugebauer \cite{Cruz-Uribe_Firorenza_Neugebauer_2003} showed that for every $p(\cdot)\in \mathscr{P}\cap \mathrm{LH}$ (see Section~\ref{sec: preliminaries} for the notation), the Hardy--Littlewood maximal operator is bounded on $L^{p(\cdot)}(w)$ if any only if
\begin{equation*}
    [w]_{\mathcal{A}_{p(\cdot)}} := \sup_{Q} |Q|^{-1}\Vert w\chi_{Q}\Vert_{p(\cdot)}\Vert w^{-1}\chi_{Q}\Vert_{p'(\cdot)}
\end{equation*}
is finite. This was extended in \cite{CG2020} to the bilinear setting, where moreover analogous boundedness results for bilinear singular integrals were proved. Also, there is by now a rich extrapolation theory for variable exponent spaces both in the linear and the multilinear setting, see \cite{CW2017, Cao2022, Nieraeth_2023}. However, it is still unknown if one can prove the exact linear and multilinear analogue of Rubio de Francia's extrapolation of boundedness on weighted Lebesgue spaces with variable exponents (see also \cite[Remark 1.7]{LO2025}).    

Over the last few years, the extension of extrapolation results to compact operators has drawn the attention of many authors. In particular, the first work that dealt with this mathematical problem in the linear setting was due to Hyt\"{o}nen and the second named author \cite{HL2023}. Further extensions of this result in the setting of weighted Morrey spaces, two weight problems, Banach function spaces and one sided situations can be found in \cite{Lappas2022}, \cite{Liu2022}, \cite{Lorist2024} (see also \cite{LO2025} for an alternative proof in the context of weighted Lebesgue spaces with variable exponents) and \cite{MartinReyes2025}, respectively. Moreover, first Cao, Olivo and Yabuta \cite{CaoOlivoYabuta2022} established extrapolation results for multilinear compact operators (see also \cite{HL2022}).

\subsection*{Main results}

Motivated by the previously mentioned works, in this paper we establish the following abstract diagonal and off-diagonal extrapolation, full range and limited range theorems for bilinear compact operators in the setting of weighted variable exponent Lebesgue spaces (see Section \ref{sec: preliminaries} for detailed definitions and explanation of the notation). 

\begin{theorem}\label{thm:main_result}
Let $T$ be a bilinear operator such that there exist fixed constants $t\in(0,\infty)$ and $\gamma\in\left[0,\infty\right)$ satisfying both of the following two assumptions.
\begin{enumerate}
\item\label{eq:main1} For all proper $2$-admissible quadruples $(\vec{p}_0(\cdot),q_0(\cdot),\vec{1},\infty)$ with
\begin{equation*}
    (q_0)_{-} > 1,
\end{equation*}
\begin{equation*}
    t<\min\{(p_{0,j})_{-}:~j=1,2\}
\end{equation*}
and
\begin{equation*}
    \frac{1}{p_0(\cdot)}-\frac{1}{q_0(\cdot)}=\gamma,
\end{equation*}
the following holds. Whenever $\vec{w}_0$ is a $2$-tuple of weights such that $\vec{w}_0\in\mathcal{A}_{\vec{p}_0(\cdot),q_0(\cdot)}$ and $\vec{w}_0^{t}\in\mathcal{A}_{\frac{\vec{p}_0(\cdot)}{t},\frac{q_0(\cdot)}{t}}$, we have
\begin{equation*}
T:L^{p_{0,1}(\cdot)}(w_{0,1})\times L^{p_{0,2}(\cdot)}(w_{0,2})\to L^{q_0(\cdot)}(\nu_{\vec{w}_0})
\end{equation*}
boundedly.

\item\label{eq:main2} There exist some fixed proper $2$-admissible quadruple $(\vec{p}_1(\cdot), q_1(\cdot), \vec{1},\infty)$ with
\begin{equation*}
    (q_1)_{-} > 1,
\end{equation*}
\begin{equation*}
    t<\min\{(p_{1,j})_{-}:~j=1,2\}
\end{equation*}
and
\begin{equation*}
    \frac{1}{p_1(\cdot)}-\frac{1}{q_1(\cdot)}=\gamma,
\end{equation*}
and a fixed $2$-tuple of weights $\vec{w}_1$ with $\vec{w}_1\in\mathcal{A}_{\vec{p}_1(\cdot),q_1(\cdot)}$ and $\vec{w}_1^{t}\in\mathcal{A}_{\frac{\vec{p}_1(\cdot)}{t},\frac{q_1(\cdot)}{t}}$, such that
\begin{equation*}
    T:L^{p_{1,1}(\cdot)}(w_{1,1})\times L^{p_{1,2}(\cdot)}(w_{1,2})\to L^{q_1(\cdot)}(\nu_{\vec{w}_1})
\end{equation*}
        compactly.
\end{enumerate}
Then: For all proper $2$-admissible quadruples $(\vec{p}(\cdot), q(\cdot),\vec{1},\infty)$ such that
\begin{equation*}
    q_{-} > 1,
\end{equation*}
\begin{equation*}
    t<\min\{(p_{j})_{-}:~j=1,2\}
\end{equation*}
and
\begin{equation*}
    \frac{1}{p(\cdot)}-\frac{1}{q(\cdot)}=\gamma,
\end{equation*}
and for all $2$-tuples of weights $\vec{w}$ with $\vec{w}\in\mathcal{A}_{\vec{p}(\cdot),q(\cdot)}$ and $\vec{w}^{t}\in\mathcal{A}_{\frac{\vec{p}(\cdot)}{t},\frac{q(\cdot)}{t}}$, we have
\begin{equation*}
    T:L^{p_{1}(\cdot)}(w_{1})\times L^{p_{2}(\cdot)}(w_{2})\to L^{q(\cdot)}(\nu_{\vec{w}})
\end{equation*}
compactly.
\end{theorem}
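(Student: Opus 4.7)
The plan is to realize the target space as a complex interpolation space between the fixed space from hypothesis~(2), on which $T$ is compact, and an auxiliary space, on which $T$ is bounded by virtue of hypothesis~(1); then to invoke the Cobos--Fern\'{a}ndez-Cabrera--Mart\'{i}nez theorem on interpolation of bilinear compact operators to transfer compactness to the target.

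First, I fix a proper $2$-admissible target quadruple $(\vec{p}(\cdot), q(\cdot), \vec{1}, \infty)$ together with a weight tuple $\vec{w}$ as in the conclusion. For a small parameter $\theta \in (0,1)$ to be chosen later, I define an auxiliary quadruple $(\vec{p}_{\theta}(\cdot), q_{\theta}(\cdot))$ and weight tuple $\vec{w}_{\theta}$ by solving
\begin{equation*}
\frac{1}{p_{j}(\cdot)} = \frac{\theta}{p_{1,j}(\cdot)} + \frac{1-\theta}{p_{\theta,j}(\cdot)}, \quad \frac{1}{q(\cdot)} = \frac{\theta}{q_{1}(\cdot)} + \frac{1-\theta}{q_{\theta}(\cdot)},
\end{equation*}
\begin{equation*}
w_{j} = w_{1,j}^{\theta}\, w_{\theta,j}^{1-\theta}, \quad j=1,2.
\end{equation*}
The Calder\'{o}n complex interpolation identity for weighted variable exponent Lebesgue spaces then yields
\begin{equation*}
\bigl[L^{p_{1,j}(\cdot)}(w_{1,j}),\, L^{p_{\theta,j}(\cdot)}(w_{\theta,j})\bigr]_{\theta} = L^{p_{j}(\cdot)}(w_{j})
\end{equation*}
for $j=1,2$, and the analogous identity for the image side relating $L^{q_{1}(\cdot)}(\nu_{\vec{w}_{1}})$ and $L^{q_{\theta}(\cdot)}(\nu_{\vec{w}_{\theta}})$ to $L^{q(\cdot)}(\nu_{\vec{w}})$.

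Second, I verify that for all $\theta$ sufficiently close to $0$ the auxiliary quadruple is proper $2$-admissible and satisfies every condition needed to apply hypothesis~(1): namely $(q_{\theta})_{-}>1$, $t < \min_{j}(p_{\theta,j})_{-}$, the homogeneity relation $\tfrac{1}{p_{\theta}(\cdot)} - \tfrac{1}{q_{\theta}(\cdot)} = \gamma$, and the two joint weight conditions $\vec{w}_{\theta} \in \mathcal{A}_{\vec{p}_{\theta}(\cdot), q_{\theta}(\cdot)}$ and $\vec{w}_{\theta}^{t} \in \mathcal{A}_{\vec{p}_{\theta}(\cdot)/t, q_{\theta}(\cdot)/t}$. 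The pointwise exponent conditions hold by continuity, since at $\theta=0$ the auxiliary parameters coincide with the target ones and the target already satisfies them with strict inequality; the weight conditions follow from the openness of the bilinear variable exponent Muckenhoupt classes under small perturbation, a standard feature of the extrapolation theory. Hypothesis~(1) then delivers boundedness of $T$ from $L^{p_{\theta,1}(\cdot)}(w_{\theta,1}) \times L^{p_{\theta,2}(\cdot)}(w_{\theta,2})$ into $L^{q_{\theta}(\cdot)}(\nu_{\vec{w}_{\theta}})$.

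Third, I combine the boundedness from step~2 with the compactness from hypothesis~(2) by applying the Cobos--Fern\'{a}ndez-Cabrera--Mart\'{i}nez interpolation theorem for bilinear compact operators to the pairs $\bigl(L^{p_{1,j}(\cdot)}(w_{1,j}),\, L^{p_{\theta,j}(\cdot)}(w_{\theta,j})\bigr)$ for $j=1,2$ and to the corresponding pair on the image side, at interpolation parameter $\theta$. Together with the Calder\'{o}n identities from step~1, this produces compactness of $T$ on the target space, completing the argument.

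\textbf{Main obstacle.} The delicate part is step~2: the simultaneous verification of all admissibility and Muckenhoupt-type conditions for the auxiliary data under small perturbation, especially the coupled condition $\vec{w}_{\theta}^{t} \in \mathcal{A}_{\vec{p}_{\theta}(\cdot)/t, q_{\theta}(\cdot)/t}$ which intertwines the scaling by $t$ with the power factorization of $\vec{w}_{\theta}$. A secondary technical point is the precise form of the complex interpolation identity for weighted variable exponent Lebesgue spaces underlying the whole argument, which needs to be available in exactly the setup used here; this is where the assumption $(q_{0})_{-} > 1$, and correspondingly $(q_{1})_{-} > 1$ and $q_{-}>1$, becomes essential.
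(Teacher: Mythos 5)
Your overall strategy is exactly the paper's: realize the target data as the $\theta$-interpolation between the fixed compactness point of hypothesis~(2) and an auxiliary point that falls under hypothesis~(1), then combine the Cobos--Fern\'{a}ndez-Cabrera--Mart\'{i}nez theorem with the Calder\'{o}n identity $[L^{p_0(\cdot)}(w_0),L^{p_1(\cdot)}(w_1)]_\theta=L^{p_\theta(\cdot)}(w_\theta)$. Steps~1 and~3 are sound modulo two small points: with your defining relation $\tfrac{1}{p_j(\cdot)}=\tfrac{\theta}{p_{1,j}(\cdot)}+\tfrac{1-\theta}{p_{\theta,j}(\cdot)}$ the bracket recovering the target is $[L^{p_{\theta,j}(\cdot)}(w_{\theta,j}),L^{p_{1,j}(\cdot)}(w_{1,j})]_\theta$ (or your ordering at parameter $1-\theta$), and one must also record that the spaces involved are Banach function spaces with the Fatou property and absolutely continuous norm, and that compactness on $A_1\times B_1$ restricts to $A_1^{\circ}\times B_1^{\circ}$, so that the CFCM theorem is actually applicable.

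The genuine gap is in step~2. What you need is not ``openness'' of the class containing $\vec{w}$ under perturbation of the exponents, but a \emph{reverse factorization}: the tuple $\vec{w}_{\theta}=(\vec{w}\,\vec{w}_1^{-\theta})^{1/(1-\theta)}$ is \emph{forced} on you by the interpolation identity, and you must prove that this specific tuple lies in $\mathcal{A}_{\vec{p}_\theta(\cdot),q_\theta(\cdot)}$ for all small $\theta$. This is a self-improvement statement that does not follow from any continuity or openness heuristic; it is the content of the paper's Section~4. There the bilinear class is first characterized by membership of each $w_j$ and of $\nu_{\vec w}$ in suitable \emph{linear} limited-range classes, a rescaling identity $[\vec w^{1/a}]_{\mathcal{A}_{(a\vec p(\cdot),aq(\cdot)),(a\vec r,as)}}=[\vec w]^{1/a}_{\mathcal{A}_{(\vec p(\cdot),q(\cdot)),(\vec r,s)}}$ reduces everything to exponents in $\mathscr{P}$, and then the linear variable-exponent factorization lemma of \cite{LO2025} is invoked componentwise, taking the minimum of the resulting thresholds $\eta$. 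The coupled condition $\vec{w}_\theta^{\,t}\in\mathcal{A}_{\vec{p}_\theta(\cdot)/t,\,q_\theta(\cdot)/t}$, which you rightly flag as the delicate point, is then \emph{not} obtained by perturbation either: the paper runs the same factorization a second time on the $t$-rescaled data $(\vec w^{\,t},\vec p(\cdot)/t,q(\cdot)/t)$ and verifies algebraically that the endpoint-$0$ output of this second run is automatically $\vec w_\theta^{\,t}$ with exponents $\vec p_\theta(\cdot)/t$, $q_\theta(\cdot)/t$, because the convex-combination relations defining the auxiliary data commute with multiplication by $t$; only then does shrinking $\theta$ make both runs (and the condition $(q_\theta)_->1$) hold simultaneously. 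Without this argument, or an equivalent substitute, your step~2 is an assertion rather than a proof.
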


\begin{theorem}\label{thm:main_result_limited_range}
Let $T$ be a bilinear operator such that there exist a fixed constant $\gamma\in\left[0,\infty\right)$, a fixed $m$-tuple $\vec{r}\in[1,\infty)^{m}$ and a fixed constant $s\in(0,\infty]$ satisfying both of the following two assumptions.
\begin{enumerate}
\item\label{eq:main1_limited_range} For all proper $2$-admissible quadruples $(\vec{p}_0(\cdot),q_0(\cdot),\vec{r},s)$ with
\begin{equation*}
    (q_0)_{-} > 1
\end{equation*}
and
\begin{equation*}
    \frac{1}{p_0(\cdot)}-\frac{1}{q_0(\cdot)}=\gamma,
\end{equation*}
the following holds. Whenever $\vec{w}_0$ is a $2$-tuple of weights such that $\vec{w}_0\in\mathcal{A}_{(\vec{p}_0(\cdot),q_0(\cdot)),(\vec{r},s)}$, we have
\begin{equation*}
    T:L^{p_{0,1}(\cdot)}(w_{0,1})\times L^{p_{0,2}(\cdot)}(w_{0,2})\to L^{q_0(\cdot)}(\nu_{\vec{w}_0})
\end{equation*}
boundedly.

\item\label{eq:main2_limited_range} There exist some fixed proper $2$-admissible quadruple $(\vec{p}_1(\cdot), q_1(\cdot), \vec{r},s)$ with
\begin{equation*}
    (q_1)_{-} > 1
\end{equation*}
and
\begin{equation*}
    \frac{1}{p_1(\cdot)}-\frac{1}{q_1(\cdot)}=\gamma,
\end{equation*}
and a fixed $2$-tuple of weights $\vec{w}_1$ with $\vec{w}_1\in\mathcal{A}_{(\vec{p}_1(\cdot),q_1(\cdot)),(\vec{r},s)}$, such that
\begin{equation*}
    T:L^{p_{1,1}(\cdot)}(w_{1,1})\times L^{p_{1,2}(\cdot)}(w_{1,2})\to L^{q_1(\cdot)}(\nu_{\vec{w}_1})
\end{equation*}
compactly.
\end{enumerate}
Then: For all proper $2$-admissible quadruples $(\vec{p}(\cdot), q(\cdot),\vec{r},s)$ such that
\begin{equation*}
    q_{-} > 1
\end{equation*}
and
\begin{equation*}
    \frac{1}{p(\cdot)}-\frac{1}{q(\cdot)}=\gamma,
\end{equation*}
and for all $2$-tuples of weights $\vec{w}$ with $\vec{w}\in\mathcal{A}_{(\vec{p}(\cdot),q(\cdot)),(\vec{r},s)}$, we have
\begin{equation*}
    T:L^{p_{1}(\cdot)}(w_{1})\times L^{p_{2}(\cdot)}(w_{2})\to L^{q(\cdot)}(\nu_{\vec{w}})
\end{equation*}
compactly.
\end{theorem}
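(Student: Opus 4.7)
The plan is to deduce this result from the abstract bilinear Cobos--Fern\'{a}ndez-Cabrera--Mart\'{i}nez type interpolation principle for compactness already established in the paper, by realizing the target space as a complex interpolation between the prescribed compactness point and a suitable auxiliary boundedness point.

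Fix a target proper 2-admissible quadruple $(\vec{p}(\cdot),q(\cdot),\vec{r},s)$ together with a weight vector $\vec{w}\in\mathcal{A}_{(\vec{p}(\cdot),q(\cdot)),(\vec{r},s)}$; boundedness of $T$ at the target comes for free from hypothesis (\ref{eq:main1_limited_range}). For a small parameter $\theta\in(0,1)$ to be chosen, I would solve for an auxiliary quadruple $(\vec{p}_*(\cdot),q_*(\cdot))$ and auxiliary weights $\vec{w}_*$ through
\begin{equation*}
    \frac{1}{p_j(\cdot)}=\frac{1-\theta}{p_{*,j}(\cdot)}+\frac{\theta}{p_{1,j}(\cdot)},\quad \frac{1}{q(\cdot)}=\frac{1-\theta}{q_*(\cdot)}+\frac{\theta}{q_1(\cdot)},\quad w_j=w_{*,j}^{1-\theta}w_{1,j}^{\theta}.
\end{equation*}
Since $\frac{1}{p(\cdot)}-\frac{1}{q(\cdot)}=\gamma$ is affine in the reciprocals, the relation $\frac{1}{p_*(\cdot)}-\frac{1}{q_*(\cdot)}=\gamma$ is automatically preserved, and $w_j$ recovers $w_{*,j}^{1-\theta}w_{1,j}^{\theta}$ under complex interpolation of weighted variable exponent Lebesgue spaces.

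The key verification is that for $\theta>0$ small enough the auxiliary quadruple $(\vec{p}_*(\cdot),q_*(\cdot),\vec{r},s)$ is again proper 2-admissible with $(q_*)_->1$, and that $\vec{w}_*\in\mathcal{A}_{(\vec{p}_*(\cdot),q_*(\cdot)),(\vec{r},s)}$. As $\theta\to 0^{+}$, the reciprocals of the auxiliary exponents converge pointwise to those of the target exponents and the auxiliary weights converge to the target weights, so a stability argument on the Muckenhoupt-type class together with the defining open inequalities of 2-admissibility yields the existence of some valid $\theta_0>0$. Hypothesis (\ref{eq:main1_limited_range}) then furnishes boundedness of $T:L^{p_{*,1}(\cdot)}(w_{*,1})\times L^{p_{*,2}(\cdot)}(w_{*,2})\to L^{q_*(\cdot)}(\nu_{\vec{w}_*})$.

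With the auxiliary boundedness in hand, compactness at the target follows by invoking the abstract principle: the complex interpolation identities $[L^{p_{*,j}(\cdot)}(w_{*,j}),L^{p_{1,j}(\cdot)}(w_{1,j})]_\theta=L^{p_j(\cdot)}(w_j)$, together with the analogous one at the target-space level for $L^{q(\cdot)}(\nu_{\vec{w}})$, combined with the bilinear Cobos--Fern\'{a}ndez-Cabrera--Mart\'{i}nez principle, yield compactness of $T:L^{p_1(\cdot)}(w_1)\times L^{p_2(\cdot)}(w_2)\to L^{q(\cdot)}(\nu_{\vec{w}})$. The main obstacle is the openness step: establishing that a small interpolation perturbation of the admissible data along the segment toward $(\vec{p}_1(\cdot),q_1(\cdot),\vec{w}_1)$ still defines admissible data in the limited-range variable exponent setting, in particular preserving the constraints $r_j\le (p_{*,j})_-$ and $(q_*)_+\le s$ implicit in the definition of a proper 2-admissible quadruple. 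This mirrors the classical openness property of Muckenhoupt weights but requires careful quantitative control in the variable exponent framework.
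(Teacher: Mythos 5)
Your overall architecture coincides with the paper's: write the target data as a $\theta$-interpolation between the fixed compactness point $(\vec{p}_1(\cdot),q_1(\cdot),\vec{w}_1)$ and an auxiliary point, get boundedness at the auxiliary point from hypothesis~\eqref{eq:main1_limited_range}, and conclude via the Cobos--Fern\'{a}ndez-Cabrera--Mart\'{i}nez theorem (Theorem~\ref{thm:CFCM}) together with the complex interpolation identity for weighted variable exponent Lebesgue spaces (Theorem~\ref{thm:SW}). However, the step you yourself flag as ``the main obstacle'' is left unproved, and it is not the routine openness argument you suggest. The auxiliary weights are $w_{*,j}=\bigl(w_j\,w_{1,j}^{-\theta}\bigr)^{1/(1-\theta)}$; as $\theta\to0^{+}$ these do converge pointwise to $w_j$, but membership in $\mathcal{A}_{(\vec{p}_*(\cdot),q_*(\cdot)),(\vec{r},s)}$ is not an open condition with respect to any topology in which this convergence takes place, so no ``stability of the Muckenhoupt-type class under small perturbations'' can be invoked off the shelf. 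What is actually needed is a genuine reverse factorization theorem: given $\vec{w}$ in the target class and $\vec{w}_1$ in the endpoint class, one must \emph{produce} $\vec{w}_0$ in the intermediate class with $\vec{w}=\vec{w}_0^{1-\theta}\vec{w}_1^{\theta}$ for all sufficiently small $\theta$. This is the content of Lemma~\ref{lem:KeyLemma}, which the paper proves by characterizing the multilinear limited-range class componentwise (Lemma~\ref{lem:Apqrs char.}), rescaling (Lemma~\ref{lem:rescale}), and reducing to the linear factorization result of Lemma~\ref{lem:factor_linear} (i.e.\ \cite[Lemma 3.20]{LO2025}), which in turn rests on self-improvement properties of variable-exponent Muckenhoupt weights rather than on a soft continuity argument. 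The preservation of $(q_*)_{-}>1$ likewise requires a separate argument (via \cite[Lemma 3.1]{LO2025}), not just the ``open inequalities of admissibility'' (which, note, are strict: $r_j<(p_{*,j})_{-}$ and $(q_*)_{+}<s$).

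Once Lemma~\ref{lem:KeyLemma} is granted, the remainder of your argument is correct and matches the paper: boundedness at both endpoints puts $T$ in the class $\mathcal{B}(\bar A\times\bar B,\bar E)$, compactness at the endpoint $1$ restricts to $A_1^{\circ}\times B_1^{\circ}$, and Theorem~\ref{thm:CFCM} combined with Theorem~\ref{thm:SW} yields compactness at the target. So the gap is localized but substantial: it is precisely the new factorization machinery of Section~\ref{sec: factorizations} that your proposal replaces with an unsupported appeal to stability.
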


As applications of Theorem~\ref{thm:main_result}, we obtain new compactness results for the commutators of bilinear $\omega$-Calder\'{o}n--Zygmund operators and bilinear fractional integrals (see Subsections~\ref{subsec:CZ} and~\ref{subsec:fractional}). Moreover, as an application of Theorem~\ref{thm:main_result_limited_range}, we obtain new compactness results for the commutators of bilinear Fourier multipliers (see Subsection~\ref{subsec:fourier}).

\begin{remark}\label{Rmk: main}
By assuming $t=1$ and choosing all the variable exponents to be constant in the assumptions and conclusions of Theorems~\ref{thm:main_result} and~\ref{thm:main_result_limited_range}, we can extend or recover several results of \cite{CaoOlivoYabuta2022, HL2022, Lorist2024} in the bilinear case.
\end{remark}

The remainder of this paper is organized as follows. In Section \ref{sec: preliminaries}, we recall the definition of bilinear compact operators and state several definitions about unweighted and weighted variable Lebesgue spaces. In Section \ref{sec: auxiliary results}, we gather some auxiliary lemmata that will aid us in proving our main results. As far as we know, one of these lemmata, namely, Lemma \ref{lem:Apqrs char.} is new. Section \ref{sec: factorizations} is devoted to the proof of two key factorization results in the setting of variable multilinear Muckenhoupt weights (see Lemmata \ref{lem:KeyLemma} and \ref{lem:KeyLemma_with_t}). In Section \ref{sec: Lions--Peetre comp. thm.}, we recall several known facts about complex interpolation for bilinear compact operators. Among them is the abstract result of Cobos--Fern\'{a}ndez-Cabrera--Mart\'{i}nez \cite[Theorem 3.2]{CFCM2020}. Lastly, in Sections \ref{sec: pf. main result} and \ref{sec: applications}, we provide the proofs of Theorems~\ref{thm:main_result} and~\ref{thm:main_result_limited_range} and their applications, respectively.

\subsection*{Notation}

We write $x\lesssim y$ to mean that there exists a constant $C>0$ such that $x\leq Cy$. This constant $C$ is just an absolute constant or one depending on parameters that are specified each time or that are understood by the context. Moreover, we denote by $|Q|$ the n-dimensional Lebesgue measure of a cube $Q\subset\R^n$ and $\langle w\rangle_Q$ will denote the average of $w$ over $Q$, that is, $|Q|^{-1}\int_{Q}w(x)\,\mathrm{d}x$.

\section{Preliminaries}\label{sec: preliminaries}

Throughout the paper, we mostly consider bilinear compact operators. Hence, in this section we recall this definition for the reader's convenience. Given the normed spaces $X, Y$ and $Z$, a bilinear operator $T:X\times Y\rightarrow Z$ is said to be compact if the set $\{T(x,y):\;\|x\|\leq1,\;\;\|y\|\leq1\}$ is precompact in $Z$. Some other equivalent formulations of this definition were given in \cite[Proposition 1]{BenTor2013}.

\subsection{Variable Lebesgue spaces} In this subsection, we recall some basic definitions of variable Lebesgue spaces. For more information, we refer the reader to \cite{CF2013} and \cite{DHHR2011}.

\subsubsection{Notation for variable exponent functions} Given any measurable function $p(\cdot):\R^n\to[0,\infty]$ and a set $E\subset \mathbb{R}^n$, we denote
    $$
    p_{-}(E)=\essinf_{x\in E}p(x),\qquad p_{+}(E)=\esssup_{x\in E}p(x),
    $$
and if $E=\R^n$, then we will write $p_{-}(\mathbb{R}^n) = p_{-}$ and $p_{+}(\mathbb{R}^n) = p_{+}$.

In this paper, we will consider several special classes of measurable functions $p(\cdot):\R^n\to[0,\infty]$:
\begin{equation*}
    \mathscr{P}_0:=\{p(\cdot):~0<p_{-} \leq p_{+}<\infty\},
\end{equation*}
\begin{equation*}
    \mathcal{P}:=\{p(\cdot):~1\leq p_{-}\leq p_{+}<\infty\},
\end{equation*}
and lastly
\begin{equation*}
    \mathscr{P}:=\{p(\cdot):~1<p_{-}\leq p_{+}<\infty\}.
\end{equation*}
 
Given $p(\cdot)\in\mathcal{P}$, the dual exponent $p'(\cdot)\in\mathcal{P}$ is given by 
$1/p(\cdot)+ 1/p'(\cdot)=1.$

If $0<|E|<\infty$, we define the harmonic mean $p_E$ on the set $E\subset\R^n$ by
\begin{equation*}
  \frac{1}{p_E}:=\displaystyle\stackinset{c}{}{c}{}{-\mkern4mu}{\displaystyle\int_E}\;\frac{1}{p(x)}\dd x=\frac{1}{|E|}\int_E\;\frac{1}{p(x)}\dd x.
\end{equation*}

\subsubsection{Luxemburg norm of variable Lebesgue spaces} Given $p(\cdot)\in\mathscr{P}_0$, define the modular associated with $p(\cdot)$ by 
\begin{equation}\label{eq:modular}
    \rho_{p(\cdot)}(f):=\int_{\R^n}|f(x)|^{p(x)}\dd x.
\end{equation} 
The variable Lebesgue space $L^{p(\cdot)}(\R^n)$ consists of all measurable
functions $f:\R^n\rightarrow \R$ such that 
\begin{equation*}
    \|f\|_{L^{p(\cdot)}}= \|f\|_{{p(\cdot)}}:=\inf\{\lambda\in(0,\infty):\rho_{p(\cdot)}(f/\lambda)\leq 1\}<\infty.
\end{equation*}

\subsubsection{Weighted variable Lebesgue spaces} By a weight $w$ we mean a measurable function $w$ on $\R^n$ such that $0<w(x)<\infty$ for almost every $x\in\R^n$. Given a weight $w$, we define the weighted variable Lebesgue space $L^{p(\cdot)}(w)$ through
$$
  \|f\|_{L^{p(\cdot)}(w)}:= \|fw\|_{p(\cdot)}< \infty.
$$
  
\subsubsection{Log-H\"{o}lder continuity} For technical reasons, we will be considering variable exponent functions possessing the local and asymptotic H\"older continuity properties. We recall these below.

\begin{definition}\label{logHolder}
For $p(\cdot):\R^n\rightarrow\R$, we say that
\begin{enumerate}
    \item\label{cond1} it is locally log-H{\"o}lder continuous, denoted by $p(\cdot)\in \mathrm{LH}_0,$ if there exists $C_0>0$ such that for all $x,y\in\R^n$, whenever $|x-y|<\tfrac{1}{2}$, then
    \begin{equation*}
        |p(x)-p(y)|\leq\frac{C_0}{-\log(|x-y|)};
    \end{equation*}
    \item\label{cond2} it is log-H{\"o}lder continuous at infinity, denoted by $p(\cdot)\in \mathrm{LH}_{\infty},$ if there exist $p_{\infty}\in\R$ and $C_\infty>0$ such that for all $x\in\R^n$,
    \begin{equation*}
        |p(x)-p_{\infty}|\leq\frac{C_{\infty}}{\log(e+|x|)}.
    \end{equation*}
\end{enumerate}
We define $\mathrm{LH} := \mathrm{LH}_0\cap \mathrm{LH}_{\infty}.$
\end{definition}

\subsection{Variable exponent Muckenhoupt characteristics}

Here we define the main classes of weights that we will be considering in this paper.

\begin{definition}
\noindent\begin{enumerate}
    \item[(1)] 
A quadruple $(\vec{p}(\cdot), q(\cdot), \vec{r}, s)$ is said to be \emph{$m$-admissible} if all of the following conditions are satisfied:
\begin{itemize}
    \item $\vec{r}= (r_1,\ldots,r_m)\in(0,\infty)^m$ is a $m$-tuple of constants.

    \item $s\in(0,\infty]$ is a constant.

    \item $\vec{p}(\cdot) = (p_1(\cdot),\ldots,p_m(\cdot))\in\mathscr{P}_0^m$ is a $m$-tuple of variable exponents with
    \begin{equation*}
    r_j < (p_{j})_{-},\quad\forall j=1,\ldots,m.
    \end{equation*}

    \item $q(\cdot)\in\mathscr{P}_{0}$ is a variable exponent with $q_{+} < s$.

    \item There is a constant $\gamma\in[0,\infty)$ such that
    \begin{equation*}
    \frac{1}{p(\cdot)} - \frac{1}{q(\cdot)} = \gamma,
    \end{equation*}
    where the variable exponent $p(\cdot)\in\mathscr{P}_0$ is defined via
    \begin{equation*}
    \frac{1}{p(\cdot)} := \sum_{j=1}^{m}\frac{1}{p_j(\cdot)}.
    \end{equation*}
\end{itemize}
In this case, we define the constant $r\in(0,\infty)$ via
    \begin{equation*}
    \frac{1}{r} := \sum_{j=1}^{m}\frac{1}{r_j}.
    \end{equation*}

    \item[(2)] A $m$-admissible quadruple $(\vec{p}(\cdot), q(\cdot), \vec{r}, s)$ is said to be \emph{proper} if
    \begin{equation*}
    p_j(\cdot)\in \mathrm{LH},\quad\forall j=1,\ldots,m
    \end{equation*}
    and
    \begin{equation*}
    q(\cdot)\in \mathrm{LH}.
    \end{equation*}
   Observe that in this case, since $(p_j)_+<\infty$, by \cite[Proposition 2.3]{CF2013} we know that $p_j(\cdot)\in \mathrm{LH}$ if and only if $1/p_j(\cdot)\in \mathrm{LH}.$ Hence, $\frac{1}{p(\cdot)}=\sum_{j=1}^m\frac{1}{p_j(\cdot)}$ is a linear combinations of functions in $ \mathrm{LH}$. Thus, combining this with the fact that $\mathrm{LH}$ is closed under linear combinations we conclude that $p(\cdot)\in \mathrm{LH}$.
\end{enumerate}
\end{definition}

For technical reasons and also for comparing with earlier definitions that were given in the literature, we observe the following.

\begin{lemma}\label{lem:range_exponents}
Let $(\vec{p}(\cdot), q(\cdot), \vec{r}, s)$ be a $m$-admissible quadruple. Set
    \begin{equation*}
    \rho_1 := r
    \end{equation*}
and define $\rho_2\in(0,\infty)$ via
    \begin{equation*}
    \frac{1}{\rho_1} - \frac{1}{\rho_2} := \gamma.
    \end{equation*}
Moreover, set
    \begin{equation*}
    s_2 := s
    \end{equation*}
and define $s_1\in(0,\infty]$ via
    \begin{equation*}
    \frac{1}{s_1} - \frac{1}{s_2} := \gamma.
    \end{equation*}
Then, there holds
    \begin{equation*}
    \rho_1 < p_{-} \leq p_{+} < s_1,
    \end{equation*}
and
    \begin{equation*}
    \rho_2 < q_{-} \leq q_{+} < s_2.
    \end{equation*}
Moreover, $\rho_1\leq\rho_2$ and $s_1\leq s_2$. In particular, if $\rho_1\geq1$, then also $\rho_2\geq1$.
\end{lemma}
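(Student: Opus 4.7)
The plan is a direct unpacking of the $m$-admissibility conditions, combined with simple manipulations of essential supremum/infimum via the pointwise identity $\frac{1}{p(x)}-\frac{1}{q(x)}=\gamma$. I would treat the easy monotonicity claims first, then the strict inequalities involving $p$, and finally those involving $q$.

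Since by definition $\frac{1}{\rho_2}=\frac{1}{\rho_1}-\gamma$ and $\frac{1}{s_1}=\frac{1}{s_2}+\gamma$, the hypothesis $\gamma\geq 0$ immediately yields $\rho_1\leq\rho_2$ and $s_1\leq s_2$; the final assertion $\rho_1\geq 1\Rightarrow\rho_2\geq 1$ then follows from $\rho_2\geq\rho_1$. The inequality $q_+<s_2$ is just a restatement of the admissibility hypothesis $q_+<s$, and $p_-\leq p_+$, $q_-\leq q_+$ are trivial.

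For $\rho_1<p_-$, I would invoke the strict inequalities $r_j<(p_j)_-$ for each $j$ together with subadditivity of $\esssup$ applied to the identity $\frac{1}{p(x)}=\sum_{j}\frac{1}{p_j(x)}$:
\begin{equation*}
    \frac{1}{p_-} \;=\; \esssup_{x\in\R^n}\sum_{j=1}^{m}\frac{1}{p_j(x)} \;\leq\; \sum_{j=1}^{m}\frac{1}{(p_j)_-} \;<\; \sum_{j=1}^{m}\frac{1}{r_j} \;=\; \frac{1}{\rho_1}.
\end{equation*}
For $p_+<s_1$, only a constant shift of a single exponent is involved, so the pointwise identity gives $\frac{1}{p_+}=\gamma+\frac{1}{q_+}$; combined with $q_+<s_2$ this yields $\frac{1}{p_+}>\gamma+\frac{1}{s_2}=\frac{1}{s_1}$, i.e.\ $p_+<s_1$. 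Analogously, from $\frac{1}{q(x)}=\frac{1}{p(x)}-\gamma$ one reads off $\frac{1}{q_-}=\frac{1}{p_-}-\gamma$, and the already established $\rho_1<p_-$ then gives $\frac{1}{q_-}<\frac{1}{\rho_1}-\gamma=\frac{1}{\rho_2}$, so $\rho_2<q_-$. As a by-product, the chain $\gamma\leq\frac{1}{p_-}<\frac{1}{\rho_1}$ confirms that $\rho_2$ is well defined as a positive real number.

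I do not expect any real obstacle here; the only point requiring care is that for the sum $\sum_{j}1/p_j(x)$ we only have subadditivity of $\esssup$ rather than an equality, but this is sufficient thanks to the strict hypotheses $r_j<(p_j)_-$. The equalities used in the other arguments are exact because they involve only a constant shift of a single exponent, which commutes with both $\essinf$ and $\esssup$.
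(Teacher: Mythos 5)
Your proof is correct and follows essentially the same route as the paper: the strict bound $\rho_1<p_-$ from $r_j<(p_j)_-$ via the (sub)additivity of $\esssup$ over $\sum_j 1/p_j(\cdot)$, and the remaining inequalities by shifting the identity $1/p(\cdot)-1/q(\cdot)=\gamma$ through $\esssup$ and $\essinf$. No gaps.
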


\begin{proof}
The assertions of the last two sentences are obvious. Thus, we concentrate on proving the rest of the lemma.
    
First of all, observe that
\begin{equation*}
    \frac{1}{r} = \sum_{i=1}^{m}\frac{1}{r_j} > \sum_{i=1}^{m}\frac{1}{(p_j)_{-}} \geq \sum_{i=1}^{m}\frac{1}{p_j(\cdot)} = \frac{1}{p(\cdot)},
\end{equation*}
so by taking the essential supremum we arrive at
\begin{equation*}
    \rho_1 = r < p_{-}.
\end{equation*}
Moreover, we have
\begin{equation}\label{eq:pqg}
    \frac{1}{p(\cdot)} = \frac{1}{q(\cdot)} + \gamma,
\end{equation}
therefore by taking the essential supremum we arrive at
\begin{equation*}
    \frac{1}{p_{-}} = \frac{1}{q_{-}} + \gamma,
\end{equation*}
so in particular
\begin{equation*}
    \frac{1}{\rho_1} = \frac{1}{r} > \frac{1}{p_{-}} > \gamma,
\end{equation*}
showing in particular that indeed we can define $\rho_2\in(0,\infty)$ as in the statement of the lemma. Furthermore, we have
\begin{equation*}
    q_{-} = \frac{1}{\frac{1}{p_{-}} - \gamma} > \frac{1}{\frac{1}{\rho_1} - \gamma} = \rho_2.
\end{equation*}
In addition, we have $q_{+} < s = s_2$ by definition. Since
\begin{equation*}
    \frac{1}{s_2} + \gamma \geq0,
\end{equation*}
one can indeed define $s_1\in(0,\infty]$ as in the statement of the lemma. Finally, by taking the essential infimum of both sides of \eqref{eq:pqg}, we have
\begin{equation*}
    \frac{1}{p_{+}} = \frac{1}{q_{+}} + \gamma.
\end{equation*}
Thus, we compute
\begin{equation*}
    p_{+} = \frac{1}{\gamma + \frac{1}{q_{+}}} < \frac{1}{\gamma + \frac{1}{s}} = s_1,
\end{equation*}
concluding the proof.
\end{proof}

\begin{definition}
Let $(\vec{p}(\cdot),q(\cdot),\vec{r},s)$ be a $m$-admissible quadruple with
\begin{equation*}
    \frac{1}{p(\cdot)} - \frac{1}{q(\cdot)} = \gamma\in[0,\infty).
\end{equation*}
We say that a vector of weights $\vec{w}=(w_1,\dots,w_m)$ satisfies the $m$-linear $\mathcal{A}_{({\vec{p}(\cdot),q(\cdot)}),(\vec{r},s)}$ condition (or $\vec{w}\in \mathcal{A}_{({\vec{p}(\cdot),q(\cdot)}),(\vec{r},s)}$) if

\begin{equation}\label{eq:Apqrs constant}
    [\vec{w}]_{\mathcal{A}_{(\vec{p}(\cdot),q(\cdot)),(\vec{r},s)}}:=\sup_{Q}|Q|^{\gamma - \left(\frac{1}{r}-\frac{1}{s}\right)}
    \Vert \nu_{\vec{w}}\chi_{Q}\Vert_{\frac{1}{\frac{1}{q(\cdot)}-\frac{1}{s}}}\prod_{j=1}^{m}\Vert w_j^{-1}\chi_{Q}\Vert_{\frac{1}{\frac{1}{r_j}-\frac{1}{p_j(\cdot)}}}<\infty,
\end{equation}
where the supremum is taken over all cubes $Q\subset\R^n$ and $\nu_{\vec{w}} := \prod_{j=1}^{m}w_j$.

If $\gamma=0$, we denote
\begin{equation*}
    \mathcal{A}_{\vec{p}(\cdot),(\vec{r},s)} := \mathcal{A}_{({\vec{p}(\cdot),q(\cdot)}),(\vec{r},s)}.
\end{equation*}
Moreover, if $r_j=1$ and $s=\infty$, then we denote
\begin{equation*}
    \mathcal{A}_{\vec{p}(\cdot),q(\cdot)} := \mathcal{A}_{({\vec{p}(\cdot),q(\cdot)}),(\vec{r},s)}.
\end{equation*}
\end{definition}

\begin{remark}\label{rmk:compare_definitions}
\noindent\begin{enumerate}
    \item If $r_j=1$, $s=\infty$ and $\gamma\in[0,m)$, then $\mathcal{A}_{({\vec{p}(\cdot),q(\cdot)}),(\vec{r},s)}=\mathcal{A}_{\vec{p}(\cdot),q(\cdot)}$. This recovers the multilinear off-diagonal variable weight class $\mathcal{A}_{\vec{p}(\cdot),q(\cdot)}$ that was first introduced in \cite[Definition 1.3]{CHSW2025}. Moreover, in the special case that $\gamma=0$, then $\mathcal{A}_{\vec{p}(\cdot),q(\cdot)}=\mathcal{A}_{\vec{p}(\cdot)}$. The latter bi/multilinear diagonal variable weight class $\mathcal{A}_{\vec{p}(\cdot)}$ was first introduced in \cite{CG2020}.
    \item\label{itm:Aprs} If $m=1$ and $\gamma\in[0,1)$, then $\mathcal{A}_{({\vec{p}(\cdot),q(\cdot)}),(\vec{r},s)}$ by Lemma~\ref{lem:range_exponents} becomes $\mathcal{A}_{(p(\cdot),q(\cdot)),(\vec{\rho},\vec{s})}$ with $\vec{\rho}=(\rho_1,\rho_2)=\left(r,\frac{1}{-\gamma+\frac{1}{r}}\right)$ and $\vec{s}=(s_1,s_2)=\left(\frac{1}{\gamma+\frac{1}{s}},s\right)$, where the linear limited range variable weight class $\mathcal{A}_{(p(\cdot),q(\cdot)),(\vec{\rho},\vec{s})}$ was first introduced in \cite[page 6]{Nieraeth_2023} but the notation stems from the work \cite[Definition 1.4]{LO2025}. 
    \item By \eqref{itm:Aprs} if $m=1$ and $\gamma=0$, then this definition coincides  with the linear diagonal/limited range variable weight class $\mathcal{A}_{p(\cdot),(r,s)}$ first introduced in \cite[Definition 3.4]{Nieraeth_2023}. If in addition, we assume that $r_j=1$ and $s=\infty$, then $\mathcal{A}_{p(\cdot),(r,s)}=\mathcal{A}_{p(\cdot)}$, where the linear diagonal variable weight class $\mathcal{A}_{p(\cdot)}$ was first introduced in \cite[(1.3) in page 365]{CruzUribe2011}.          
\end{enumerate}
\end{remark}

\begin{definition}
Given variable exponents 
\begin{equation*}
p_1(\cdot),\dots,p_m(\cdot),\qquad p_{0,1}(\cdot),\dots,p_{0,m}(\cdot)\qquad \text{and}\qquad p_{1,1}(\cdot),\dots,p_{1,m}(\cdot)
\end{equation*}
and vectors of weights 
\begin{equation*}
\vec{w}=(w_1,\dots,w_m),\qquad\vec{w}_0=(w_{0,1},\dots,w_{0,m})\qquad\text{and}\qquad\vec{w}_1=(w_{1,1},\dots,w_{1,m})
\end{equation*}
we define 
\begin{gather*}
    \vec{p}(\cdot):=(p_1(\cdot),\dots,p_m(\cdot)),\quad
    \vec{p}_0(\cdot):=(p_{0,1}(\cdot),\dots,p_{0,m}(\cdot))
    \quad\text{and}\quad
    \vec{p}_1(\cdot):=(p_{1,1}(\cdot),\dots,p_{1,m}(\cdot))\\
    \nu_{\vec{w}}:=\prod_{j=1}^m w_j,\qquad \nu_{\vec{w}_0}:=\prod_{j=1}^m w_{0,j}\qquad\text{and}\qquad \nu_{\vec{w}_1}:=\prod_{j=1}^m w_{1,j}.
\end{gather*}
\end{definition}

\section{Auxiliary results}\label{sec: auxiliary results}

In this section, we recall several known results from the theory of variable Lebesgue spaces that we will need for our purposes.  

\begin{lemma}[\cite{CF2013}, Proposition 2.18]\label{lem:homog}
Let $p(\cdot)\in\mathscr{P}_0$. Then, it holds that  $\||f|^s\|_{p(\cdot)}=\|f\|^s_{sp(\cdot)},$ for all constants $s\in(0,\infty)$. 
\end{lemma}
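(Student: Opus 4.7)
The plan is to unwind the definition of the Luxemburg norm, perform a change of variables in the infimum, and recognise the result as the modular for the rescaled exponent $sp(\cdot)$. Specifically, I would start from
\begin{equation*}
\||f|^{s}\|_{p(\cdot)} = \inf\bigl\{\lambda>0 : \rho_{p(\cdot)}(|f|^{s}/\lambda)\leq 1\bigr\}
\end{equation*}
and write $\lambda = \mu^{s}$. Since $s\in(0,\infty)$ and $p(\cdot)\in\mathscr{P}_{0}$ ensures that all exponents involved are finite and positive, the map $\mu\mapsto\mu^{s}$ is a strictly increasing bijection of $(0,\infty)$ onto itself, so $\lambda$ runs through $(0,\infty)$ iff $\mu$ does, and the infimum over $\lambda$ equals the infimum over $\mu^{s}$.

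Next I would carry out the pointwise computation
\begin{equation*}
\left(\frac{|f(x)|^{s}}{\mu^{s}}\right)^{p(x)} = \left(\frac{|f(x)|}{\mu}\right)^{sp(x)},
\end{equation*}
which after integrating gives $\rho_{p(\cdot)}(|f|^{s}/\mu^{s}) = \rho_{sp(\cdot)}(f/\mu)$. Here the definition \eqref{eq:modular} of the modular $\rho_{p(\cdot)}$ is being applied with the exponent $sp(\cdot)$, which itself still belongs to $\mathscr{P}_{0}$ (in particular it has the same kind of lower/upper bounds, rescaled by $s$), so the modular is well defined.

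Finally, combining these two steps yields
\begin{equation*}
\||f|^{s}\|_{p(\cdot)} = \inf\bigl\{\mu^{s} : \rho_{sp(\cdot)}(f/\mu)\leq 1\bigr\} = \Bigl(\inf\bigl\{\mu>0 : \rho_{sp(\cdot)}(f/\mu)\leq 1\bigr\}\Bigr)^{s} = \|f\|_{sp(\cdot)}^{s},
\end{equation*}
where in the middle equality I use once more the monotonicity of $\mu\mapsto\mu^{s}$ to pull the power outside of the infimum. There is no genuine obstacle here; the only thing to be mildly careful about is handling the degenerate cases where $f$ is identically zero or has infinite norm, but both sides of the identity transform consistently (both equal $0$ or both equal $\infty$), so the identity remains valid by the same substitution argument.
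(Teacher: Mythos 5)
Your proof is correct. The paper does not actually prove this lemma---it is quoted directly from \cite[Proposition 2.18]{CF2013}---and your argument (substituting $\lambda=\mu^{s}$ in the Luxemburg infimum, using the pointwise identity $\bigl(|f|^{s}/\mu^{s}\bigr)^{p(\cdot)}=\bigl(|f|/\mu\bigr)^{sp(\cdot)}$ to identify the modulars, and pulling the power out of the infimum by monotonicity of $t\mapsto t^{s}$) is exactly the standard proof of that cited proposition, with the degenerate cases handled correctly.
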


The following H\"older's inequalities hold in variable Lebesgue spaces.

\begin{lemma}[\cite{CF2013}, Corollary 2.30]\label{Holder's ineq.} 
Fix a positive integer $m\ge 2$ and let $p_1(\cdot),\dots,p_m(\cdot)\in\mathcal{P}$ satisfy
\begin{equation*}
    \sum_{j=1}^m\frac{1}{p_{j}(\cdot)}=1,
\end{equation*}
where $j=1,\dots,m$. Then, for all $f_j\in L^{p_j(\cdot)}$, one has
\begin{equation*}
   \int_{\R^n}|f_1\cdots f_m|\lesssim\|f_1\|_{L^{p_1(\cdot)}}\cdots\|f_m\|_{L^{p_m(\cdot)}},
\end{equation*}
where the implicit constant depends only on the $p_j(\cdot)$ and $m.$
\end{lemma}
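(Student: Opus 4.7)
The plan is to mimic the classical proof of Hölder's inequality, replacing the constant exponents by pointwise variable ones and applying Young's inequality at each $x\in\R^n$. First I would reduce by homogeneity of the Luxemburg norm to the case in which $\|f_j\|_{p_j(\cdot)}=1$ for every $j=1,\ldots,m$; since each $p_j(\cdot)\in\mathcal{P}$ has $(p_j)_+<\infty$, the unit ball property of $L^{p(\cdot)}$ then gives the modular bound $\rho_{p_j(\cdot)}(f_j)=\int_{\R^n}|f_j|^{p_j(\cdot)}\dd x\le1$.

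Next, at almost every $x\in\R^n$ the exponents $p_1(x),\ldots,p_m(x)\in[1,\infty)$ satisfy $\sum_{j=1}^m 1/p_j(x)=1$, so the weighted AM--GM (Young) inequality gives pointwise
\begin{equation*}
|f_1(x)\cdots f_m(x)|\;\le\;\sum_{j=1}^m\frac{|f_j(x)|^{p_j(x)}}{p_j(x)}.
\end{equation*}
Integrating over $\R^n$, using $p_j(x)\ge(p_j)_-\ge1$, and inserting the modular bound yields
\begin{equation*}
\int_{\R^n}|f_1\cdots f_m|\dd x\;\le\;\sum_{j=1}^m\frac{1}{(p_j)_-}\,\rho_{p_j(\cdot)}(f_j)\;\le\;\sum_{j=1}^m\frac{1}{(p_j)_-}\;\le\;m.
\end{equation*}
Undoing the normalization produces the claimed inequality with constant depending only on $m$ and on $(p_j)_-$, as required.

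There is no real obstacle; the only subtlety is the unit ball property of $L^{p(\cdot)}$, which fails if $p_+=\infty$ but holds as soon as $(p_j)_+<\infty$, a condition built into the definition of $\mathcal{P}$. As an alternative, one could argue by induction on $m$: the case $m=2$ is the classical binary Hölder inequality in variable Lebesgue spaces, and the inductive step uses the generalized two-variable inequality $\|fg\|_{r(\cdot)}\lesssim\|f\|_{p(\cdot)}\|g\|_{s(\cdot)}$ valid whenever $1/r(\cdot)=1/p(\cdot)+1/s(\cdot)$, applied with the running exponent $1/q_k(\cdot):=\sum_{j=k}^{m}1/p_j(\cdot)$. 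This inductive route has the advantage of extending to the off-diagonal formulations that the paper later invokes (e.g.\ in Section~\ref{sec: auxiliary results}), but for the stated diagonal version the direct pointwise Young argument above is the shortest.
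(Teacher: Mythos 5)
Your argument is correct. Note that the paper does not prove this lemma at all: it is imported verbatim as \cite[Corollary 2.30]{CF2013}, so there is no in-paper proof to compare against. Your direct route---normalize so that $\|f_j\|_{p_j(\cdot)}=1$, use the unit ball property (valid since $(p_j)_+<\infty$ for $p_j(\cdot)\in\mathcal{P}$) to get $\rho_{p_j(\cdot)}(f_j)\le 1$, apply the pointwise weighted AM--GM with weights $1/p_j(x)$, and integrate---is exactly the standard proof of the two-function case in \cite{CF2013} extended to $m$ factors, and every step checks out; in fact you obtain the constant $m$ (or even $\sum_j 1/(p_j)_-$), independent of the exponents, which is slightly better than what the statement requires. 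Your alternative inductive route via the off-diagonal two-function inequality is closer to how \cite{CF2013} formally derives Corollary 2.30 from Corollary 2.28, but both are sound.
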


\begin{lemma}[\cite{TWX2025}, Lemma 6.2]\label{Gen. Holder's ineq.}
Let $m\ge2$ and $p(\cdot),p_1(\cdot),\dots,p_m(\cdot)\in\mathscr{P}_{0}$ satisfy
\begin{equation*}
    \frac{1}{p(\cdot)}=\sum_{j=1}^m\frac{1}{p_j(\cdot)},
\end{equation*}
where $j=1,\dots,m$. Then, for all $f_j\in L^{p_j(\cdot)}$ and $f_1\cdots f_m\in L^{p(\cdot)}$ one has
\begin{equation}\label{Holder's impl. constant}
    \|f_1\cdots f_m\|_{L^{p(\cdot)}}\lesssim\|f_1\|_{L^{p_1(\cdot)}}\cdots\|f_m\|_{L^{p_m(\cdot)}},
\end{equation}
where the implicit constant depends only on the $p_j(\cdot)$ and $m.$
\end{lemma}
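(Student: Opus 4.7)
The plan is to reduce to unit norms by homogeneity, apply a pointwise Young-type inequality to bound the modular, and then convert the modular estimate into a norm estimate.

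First I would use Lemma~\ref{lem:homog} to normalize. If any $f_j \equiv 0$ the inequality is trivial, so assume $\|f_j\|_{L^{p_j(\cdot)}} > 0$; by replacing $f_j$ by $f_j/\|f_j\|_{L^{p_j(\cdot)}}$ we may assume $\|f_j\|_{L^{p_j(\cdot)}} = 1$, which by definition of the Luxemburg norm gives $\rho_{p_j(\cdot)}(f_j) \le 1$. The goal is then to show $\|f_1\cdots f_m\|_{L^{p(\cdot)}} \le C$ for a constant depending only on the $p_j(\cdot)$ and $m$.

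The key pointwise step is the following. Since $\frac{1}{p(x)} = \sum_{j=1}^m \frac{1}{p_j(x)} \ge \frac{1}{p_j(x)}$ for each $j$, we have $p_j(x) \ge p(x)$ a.e., so the exponents $q_j(x) := p_j(x)/p(x)$ satisfy $q_j(x) \ge 1$ and $\sum_j 1/q_j(x) = 1$. The standard numerical Young inequality then yields, pointwise a.e.,
\begin{equation*}
    \prod_{j=1}^m |f_j(x)|^{p(x)} \;\le\; \sum_{j=1}^m \frac{p(x)}{p_j(x)}\,|f_j(x)|^{p_j(x)}.
\end{equation*}
Integrating and using $p(x)/p_j(x) \le p_+/(p_j)_- < \infty$ (valid because $p_j(\cdot), p(\cdot)\in\mathscr{P}_0$), I obtain
\begin{equation*}
    \rho_{p(\cdot)}(f_1\cdots f_m) \;\le\; \sum_{j=1}^m \frac{p_+}{(p_j)_-}\,\rho_{p_j(\cdot)}(f_j) \;\le\; \sum_{j=1}^m \frac{p_+}{(p_j)_-} \;=:\; C_1,
\end{equation*}
a finite constant depending only on $m$ and the $p_j(\cdot)$.

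Finally I would translate this modular control into a norm bound. Setting $\lambda := \max(1, C_1^{1/p_-})$, the fact that $\lambda \ge 1$ implies $\lambda^{-p(x)} \le \lambda^{-p_-}$ for a.e.~$x$, hence
\begin{equation*}
    \rho_{p(\cdot)}\!\left(\frac{f_1\cdots f_m}{\lambda}\right) \le \lambda^{-p_-} \rho_{p(\cdot)}(f_1\cdots f_m) \le \lambda^{-p_-} C_1 \le 1,
\end{equation*}
so $\|f_1\cdots f_m\|_{L^{p(\cdot)}} \le \lambda$, which after unnormalizing gives the claim with an implicit constant depending only on the $p_j(\cdot)$ and $m$. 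The only subtle point is that $p(\cdot)$ and the $p_j(\cdot)$ may be less than $1$, so one cannot appeal directly to the classical variable Hölder inequality of Lemma~\ref{Holder's ineq.} (which requires exponents in $\mathcal{P}$); the pointwise Young step circumvents this because it only uses $q_j(x)\ge 1$, which is automatic from the reciprocal-sum hypothesis regardless of the size of $p(x)$ itself.
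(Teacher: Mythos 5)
Your proof is correct. The paper does not prove this lemma at all---it cites \cite[Lemma 6.2]{TWX2025} and merely remarks that the proof there, written for exponents in $\mathcal{P}$, goes through for $\mathscr{P}_0$. Your argument is the standard one (normalize by homogeneity, pointwise Young's inequality with the exponents $q_j(x)=p_j(x)/p(x)$ to control the modular, then pass from the modular bound to the norm bound), and it correctly avoids any appeal to exponents being at least $1$: the only places where convexity-type assumptions could enter are the Young step, which needs only $q_j(x)\ge 1$ and $\sum_j 1/q_j(x)=1$ (automatic from the reciprocal-sum hypothesis), and the equivalence $\|f\|_{p_j(\cdot)}\le 1\Rightarrow\rho_{p_j(\cdot)}(f_j)\le 1$, which holds for any $p_j(\cdot)\in\mathscr{P}_0$ with $(p_j)_+<\infty$ by monotone convergence. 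So your write-up actually supplies the verification that the paper leaves to the reader. One cosmetic remark: the factor $p(x)/p_j(x)=1/q_j(x)$ is at most $1$, so you could replace $C_1=\sum_j p_+/(p_j)_-$ simply by $m$, but this changes nothing.
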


We point out that H\"older's inequality for two variable exponents can be found in \cite[Corollary 2.28]{CF2013} (see also \cite[Remark 2.6]{LO2025}). Moreover, the results \cite[Corollary 2.28]{CF2013} and \cite[Lemma 6.2]{TWX2025} were originally stated and proved for variable exponents that belong to the set $\mathcal{P}$. However, by tracking their proofs one can see that they also hold for variable exponents that belong to the set $\mathscr{P}_0$.

Now, we emphasize that it is possible to characterize the multilinear $\mathcal{A}_{({\vec{p}(\cdot),q(\cdot)}),(\vec{r},s)}$ weights in terms of the $\mathcal{A}_{(p(\cdot),q(\cdot)),(r,s)}$ and $\mathcal{A}_{p(\cdot),(r,s)}$ weight classes. In particular, relying on similar arguments as in \cite[Lemma 1.2]{CHSW2025}, \cite[Proposition 3.1.6]{Nieraeth2021} or \cite[Proposition 4.7]{CG2020} we obtain the following result.

\begin{lemma}\label{lem:Apqrs char.}
Let $(\vec{p}(\cdot),q(\cdot),\vec{r},s)$ be a $m$-admissible quadruple. Then $\vec{w}\in\mathcal{A}_{({\vec{p}(\cdot),q(\cdot)}),(\vec{r},s)}$ if and only if    
\begin{equation}\label{equiv. mult. weights 1}
\begin{cases}
    w_j\in\mathcal{A}_{p_j(\cdot),(r_j,\sigma_j)}, &\text{with}\quad\frac{1}{\sigma_j}=\frac{1}{r_j}-\big(\frac{1}{r}-\frac{1}{s}\big)\quad\text{for all}\quad j=1,\dots,m, \\
  \nu_{\vec{w}}\in\mathcal{A}_{(p(\cdot),q(\cdot)),(r,s)}.
\end{cases}
\end{equation}
\end{lemma}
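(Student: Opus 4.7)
The plan is to follow the classical factorization argument for multilinear Muckenhoupt classes, as carried out in \cite[Lemma 1.2]{CHSW2025}, \cite[Proposition 3.1.6]{Nieraeth2021}, and \cite[Proposition 4.7]{CG2020}. The two key ingredients are the pointwise identity $\nu_{\vec{w}}=\prod_{j=1}^{m}w_{j}$ together with its dual form $w_{j}=\nu_{\vec{w}}\prod_{k\ne j}w_{k}^{-1}$ (restricted to any cube $Q$), and the generalized H\"older inequality for variable Lebesgue spaces (Lemma~\ref{Gen. Holder's ineq.}), supplemented if needed by the homogeneity identity (Lemma~\ref{lem:homog}) to rescale exponents.

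For the direction $(\Rightarrow)$, assume $\vec{w}\in\mathcal{A}_{({\vec{p}(\cdot),q(\cdot)}),(\vec{r},s)}$. To obtain $\nu_{\vec{w}}\in\mathcal{A}_{(p(\cdot),q(\cdot)),(r,s)}$, I apply Lemma~\ref{Gen. Holder's ineq.} to $\nu_{\vec{w}}^{-1}\chi_{Q}=\prod_{j}w_{j}^{-1}\chi_{Q}$ with the decomposition $\frac{1}{r}-\frac{1}{p(\cdot)}=\sum_{j}\bigl(\frac{1}{r_{j}}-\frac{1}{p_{j}(\cdot)}\bigr)$, which is immediate from the defining relations of $r$ and $p(\cdot)$; this yields $\|\nu_{\vec{w}}^{-1}\chi_{Q}\|_{\frac{1}{1/r-1/p(\cdot)}}\lesssim\prod_{j}\|w_{j}^{-1}\chi_{Q}\|_{\frac{1}{1/r_{j}-1/p_{j}(\cdot)}}$, and the common prefactor $|Q|^{\gamma-(1/r-1/s)}\|\nu_{\vec{w}}\chi_{Q}\|_{\frac{1}{1/q(\cdot)-1/s}}$ then gives the desired inequality between characteristics. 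For each $w_{j}\in\mathcal{A}_{p_{j}(\cdot),(r_{j},\sigma_{j})}$, I instead apply Lemma~\ref{Gen. Holder's ineq.} to the dual factorization $w_{j}\chi_{Q}=(\nu_{\vec{w}}\chi_{Q})\prod_{k\ne j}(w_{k}^{-1}\chi_{Q})$; the natural H\"older identity produces a bound on $\|w_{j}\chi_{Q}\|$ at an exponent whose reciprocal equals $\frac{1}{p_{j}(\cdot)}-\frac{1}{\sigma_{j}}-\gamma$. An auxiliary H\"older step against $\chi_{Q}$ (contributing $|Q|^{\gamma}$ when $\gamma>0$, and vacuous when $\gamma=0$) then bridges to the target exponent $\frac{1}{1/p_{j}(\cdot)-1/\sigma_{j}}$, while the calibration $\frac{1}{r_{j}}-\frac{1}{\sigma_{j}}=\frac{1}{r}-\frac{1}{s}$ guarantees that the $|Q|$-powers collect precisely to the joint prefactor $|Q|^{\gamma-(1/r-1/s)}$.

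For the direction $(\Leftarrow)$, I reverse this chain. Each linear condition $w_{j}\in\mathcal{A}_{p_{j}(\cdot),(r_{j},\sigma_{j})}$ allows me to bound $\|w_{j}^{-1}\chi_{Q}\|_{\frac{1}{1/r_{j}-1/p_{j}(\cdot)}}$ by $|Q|^{1/r_{j}-1/\sigma_{j}}\|w_{j}\chi_{Q}\|^{-1}_{\frac{1}{1/p_{j}(\cdot)-1/\sigma_{j}}}$, and the product of these $|Q|$-factors is $|Q|^{m(1/r-1/s)}$. Combined with the joint prefactor $|Q|^{\gamma-(1/r-1/s)}$, this yields $|Q|^{\gamma+(m-1)(1/r-1/s)}$. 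A second application of Lemma~\ref{Gen. Holder's ineq.} to $\nu_{\vec{w}}=\prod_{j}w_{j}$ (possibly supplemented by Lemma~\ref{lem:homog} to reconcile the natural H\"older exponent with the target $\frac{1}{1/q(\cdot)-1/s}$) controls $\|\nu_{\vec{w}}\chi_{Q}\|_{\frac{1}{1/q(\cdot)-1/s}}$ in terms of $\prod_{j}\|w_{j}\chi_{Q}\|_{\frac{1}{1/p_{j}(\cdot)-1/\sigma_{j}}}$ up to an appropriate $|Q|$-power, and the hypothesis $\nu_{\vec{w}}\in\mathcal{A}_{(p(\cdot),q(\cdot)),(r,s)}$ absorbs the final $\|\nu_{\vec{w}}^{-1}\chi_{Q}\|$ factor.

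The main obstacle I expect is the delicate exponent bookkeeping: verifying that all the variable H\"older identities and $|Q|$-powers balance out under the defining relations $\frac{1}{p(\cdot)}=\sum_{j}\frac{1}{p_{j}(\cdot)}$, $\frac{1}{r}=\sum_{j}\frac{1}{r_{j}}$, $\frac{1}{p(\cdot)}-\frac{1}{q(\cdot)}=\gamma$, and $\frac{1}{\sigma_{j}}=\frac{1}{r_{j}}-\bigl(\frac{1}{r}-\frac{1}{s}\bigr)$, the last of which is precisely the calibration that makes the balance work. Once this arithmetic is carried out carefully, the argument becomes essentially mechanical.
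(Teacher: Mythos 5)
Your ``only if'' direction is essentially the paper's argument: factor $\nu_{\vec{w}}^{-1}=\prod_j w_j^{-1}$ and $w_{j}=\nu_{\vec{w}}\prod_{k\neq j}w_k^{-1}$, apply Lemma~\ref{Gen. Holder's ineq.}, and use one extra H\"older step against $\chi_Q$ to account for $\gamma$; your exponent bookkeeping there is correct. (You should also record, as the paper does, that $(p_j(\cdot),p_j(\cdot),r_j,\sigma_j)$ and $(p(\cdot),q(\cdot),r,s)$ are $1$-admissible, i.e.\ $(p_j)_+<\sigma_j$, so that the linear classes appearing in \eqref{equiv. mult. weights 1} are even well defined.) The genuine gap is in the ``if'' direction. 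After you replace each $\Vert w_j^{-1}\chi_Q\Vert_{\frac{1}{\frac{1}{r_j}-\frac{1}{p_j(\cdot)}}}$ by $|Q|^{\frac{1}{r}-\frac{1}{s}}\Vert w_j\chi_Q\Vert^{-1}_{\frac{1}{\frac{1}{p_j(\cdot)}-\frac{1}{\sigma_j}}}$, closing the argument requires
\begin{equation*}
    \Vert\nu_{\vec{w}}\chi_Q\Vert_{\frac{1}{\frac{1}{q(\cdot)}-\frac{1}{s}}}\lesssim|Q|^{-\gamma-(m-1)\left(\frac{1}{r}-\frac{1}{s}\right)}\prod_{j=1}^{m}\Vert w_j\chi_Q\Vert_{\frac{1}{\frac{1}{p_j(\cdot)}-\frac{1}{\sigma_j}}},
\end{equation*}
and your proposed ``second application of H\"older to $\nu_{\vec{w}}=\prod_j w_j$'' cannot produce this: H\"older controls $\Vert\prod_j w_j\chi_Q\Vert$ only at the exponent whose reciprocal is $\sum_j\left(\frac{1}{p_j(\cdot)}-\frac{1}{\sigma_j}\right)=\frac{1}{q(\cdot)}-\frac{1}{s}+\gamma+(m-1)\left(\frac{1}{r}-\frac{1}{s}\right)$, strictly larger than $\frac{1}{q(\cdot)}-\frac{1}{s}$ whenever $m\geq2$ or $\gamma>0$. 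Passing from the norm at the smaller Lebesgue exponent to the norm at the larger one on a cube is a reverse-H\"older-type step that fails in general; Lemma~\ref{lem:homog} cannot fix it, and an auxiliary H\"older against $\chi_Q$ only lowers exponents at the cost of a \emph{positive} power of $|Q|$, whereas the display above demands a negative one. Moreover, no $\Vert\nu_{\vec{w}}^{-1}\chi_Q\Vert$ factor ever appears in your chain, so there is nothing for the hypothesis $\nu_{\vec{w}}\in\mathcal{A}_{(p(\cdot),q(\cdot)),(r,s)}$ to ``absorb''.

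The missing idea is a \emph{lower} bound obtained from the constant function: apply the integral form of H\"older's inequality (Lemma~\ref{Holder's ineq.}) to
\begin{equation*}
    1=\left(\frac{1}{|Q|}\int_Q(\nu_{\vec{w}}^{-1})^{\frac{1}{m(\frac{1}{r}-\frac{1}{s})}}\prod_{j=1}^{m}w_j^{\frac{1}{m(\frac{1}{r}-\frac{1}{s})}}\right)^{m\left(\frac{1}{r}-\frac{1}{s}\right)}
\end{equation*}
with the conjugate decomposition $\frac{\frac{1}{r}-\frac{1}{p(\cdot)}}{m(\frac{1}{r}-\frac{1}{s})}+\sum_{j=1}^{m}\frac{\frac{1}{p_j(\cdot)}-\frac{1}{\sigma_j}}{m(\frac{1}{r}-\frac{1}{s})}=1$; together with Lemma~\ref{lem:homog} this gives
\begin{equation*}
    |Q|^{m\left(\frac{1}{r}-\frac{1}{s}\right)}\lesssim\Vert\nu_{\vec{w}}^{-1}\chi_Q\Vert_{\frac{1}{\frac{1}{r}-\frac{1}{p(\cdot)}}}\prod_{j=1}^{m}\Vert w_j\chi_Q\Vert_{\frac{1}{\frac{1}{p_j(\cdot)}-\frac{1}{\sigma_j}}}.
\end{equation*}
Inserting this factor of $1$ into the multilinear characteristic and then pairing $\Vert\nu_{\vec{w}}\chi_Q\Vert\,\Vert\nu_{\vec{w}}^{-1}\chi_Q\Vert$ via the hypothesis on $\nu_{\vec{w}}$, and $\Vert w_j\chi_Q\Vert\,\Vert w_j^{-1}\chi_Q\Vert$ via the hypotheses on the $w_j$, makes all $|Q|$-powers cancel; this is exactly how the paper closes the ``if'' direction.
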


\begin{proof}
Fix $j_0\in\{1,\dots,m\}$. Notice that $(p_{j_0}(\cdot),p_{j_0}(\cdot),r_{j_0},\sigma_{j_0})$ is a $1$-admissible quadruple. This follows from the fact that $(p_{j_0})_{+} < \sigma_{j_0}$. Indeed, this inequality is equivalent to $\frac{1}{\sigma_{j_0}}<\frac{1}{(p_{j_0})_{+}}$. Then, this is further equivalent to
\begin{equation*}
    \frac{1}{r_{j_0}}-\frac{1}{r}+\frac{1}{s}<\frac{1}{(p_{j_0})_{+}}.    
\end{equation*}
By rearrangement, this becomes
\begin{equation*}
    \frac{1}{r_{j_0}}-\frac{1}{(p_{j_0})_{+}}<\frac{1}{r}-\frac{1}{s}.
\end{equation*}
We estimate
\begin{equation*}
    \frac{1}{r} - \frac{1}{p(\cdot)} = \frac{1}{r_{j_0}}-\frac{1}{p_{j_{0}}(\cdot)}+\sum_{\substack{j=1\\j\neq j_0}}^m\frac{1}{r_j}-\frac{1}{p_j(\cdot)}\geq \frac{1}{r_{j_0}}-\frac{1}{p_{j_{0}}(\cdot)}.
\end{equation*}
Hence, taking essential supremum, we obtain
\begin{equation*}
    \frac{1}{r}-\frac{1}{s} \overset{\text{Lemma~\ref{lem:range_exponents}}}{>}\frac{1}{r}-\frac{1}{p_{+}}\geq\frac{1}{r_{j_0}}-\frac{1}{(p_{j_{0}})_{+}},
\end{equation*}
as wanted.

Similarly, by Lemma~\ref{lem:range_exponents} we have that $(p(\cdot),q(\cdot),r,s)$ is a $1$-admissible quadruple.

Now, we first prove the ``only if'' direction, that is, $\vec{w}\in\mathcal{A}_{({\vec{p}(\cdot),q(\cdot)}),(\vec{r},s)}\Rightarrow$ \eqref{equiv. mult. weights 1}. Notice that
\begin{equation*}
    \frac{1}{p_{j_0}(\cdot)}-\frac{1}{\sigma_{j_0}}=\frac{1}{p(\cdot)}-\frac{1}{s}+\sum_{\substack{j=1\\j\neq j_0}}^{m}\frac{1}{r_j}-\frac{1}{p_j(\cdot)}\qquad\text{and}\qquad\frac{1}{p(\cdot)}=\frac{1}{q(\cdot)}+\gamma.  
\end{equation*}
Hence, by H\"older's inequality (Lemma \ref{Gen. Holder's ineq.}) with variable exponents, we have
\begin{align*}
    \big\|w_{j_0}\chi_Q\big\|_{\frac{1}{\frac{1}{p_{j_0}(\cdot)}-\frac{1}{\sigma_{j_0}}}}\big\|w_{j_0}^{-1}\chi_Q\big\|_{\frac{1}{\frac{1}{r_{j_0}}-\frac{1}{p_{j_0}(\cdot)}}}
    &=\Bigg\|\nu_{\vec{w}}\prod_{\substack{j=1\\j\neq j_0}}^{m}w_{j_0}^{-1}\chi_Q\Bigg\|_{\frac{1}{\frac{1}{p(\cdot)}-\frac{1}{s}+\sum_{\substack{j=1\\j\neq j_0}}^{m}\frac{1}{r_j}-\frac{1}{p_j(\cdot)}}}\Big\|w_{j_0}^{-1}\chi_Q\Big\|_{\frac{1}{\frac{1}{r_{j_0}}-\frac{1}{p_{j_0}(\cdot)}}}\\ 
    &\lesssim\big\|\nu_{\vec{w}}\chi_Q\big\|_{\frac{1}{\frac{1}{p(\cdot)}-\frac{1}{s}}}\prod_{j=1}^m\big\|w_j^{-1}\chi_Q\big\|_{\frac{1}{\frac{1}{r_j}-\frac{1}{p_j(\cdot)}}}\\
    &\lesssim|Q|^\gamma\big\|\nu_{\vec{w}}\chi_Q\big\|_{\frac{1}{\frac{1}{q(\cdot)}-\frac{1}{s}}}\prod_{j=1}^m\big\|w_j^{-1}\chi_Q\big\|_{\frac{1}{\frac{1}{r_j}-\frac{1}{p_j(\cdot)}}}.
\end{align*}
Multiplying both sides of the previous estimate by $|Q|^{-(\frac{1}{r}-\frac{1}{s})}$ and taking a supremum over all cubes $Q$ we get that
\begin{equation*}
    [w_{j_0}]_{\mathcal{A}_{p_{j_0}(\cdot),(r,s)}}\lesssim[\vec{w}]_{\mathcal{A}_{(\vec{p}(\cdot),q(\cdot)),(\vec{r},s)}}.
\end{equation*}
For the claim that $\nu_{\vec{w}}\in\mathcal{A}_{(p(\cdot),q(\cdot)),(r,s)}$, first notice that
\begin{equation*}
    \frac{1}{r}-\frac{1}{p(\cdot)}=\sum_{j=1}^m\frac{1}{r_j}-\frac{1}{p_j(\cdot)}.      
\end{equation*}
Thus, by H\"older's inequality (Lemma \ref{Gen. Holder's ineq.}) with variable exponents, we obtain
\begin{equation*}
    \big\|\nu_{\vec{w}}\chi_Q\big\|_{\frac{1}{\frac{1}{q(\cdot)}-\frac{1}{s}}}\big\|\nu_{\vec{w}}^{-1}\chi_Q\big\|_{\frac{1}{\frac{1}{r}-\frac{1}{p(\cdot)}}}\lesssim\big\|\nu_{\vec{w}}\chi_Q\big\|_{\frac{1}{\frac{1}{q(\cdot)}-\frac{1}{s}}}\prod_{j=1}^m\big\|w_j^{-1}\chi_Q\big\|_{\frac{1}{\frac{1}{r_j}-\frac{1}{p_j(\cdot)}}}.   
\end{equation*}
 Multiplying both sides of this estimate by $|Q|^{\gamma-(\frac{1}{r}-\frac{1}{s})}$ and taking a supremum over all cubes $Q$ we deduce that
 \begin{equation*}
    [\nu_{\vec{w}}]_{\mathcal{A}_{(p(\cdot),q(\cdot)),(r,s)}}\lesssim[\vec{w}]_{\mathcal{A}_{(\vec{p}(\cdot),q(\cdot)),(\vec{r},s)}}.  
 \end{equation*}

To prove the ``if'' direction, we assume that \eqref{equiv. mult. weights 1} holds. Then, by Lemma \ref{lem:homog} and H\"older's inequality (Lemma \ref{Holder's ineq.}) with variable exponents 
\begin{equation*}
    \frac{\frac{1}{r}-\frac{1}{p(\cdot)}}{m(\frac{1}{r}-\frac{1}{s})}+\sum_{j=1}^m\frac{\frac{1}{p_j(\cdot)}-\frac{1}{\sigma_j}}{m(\frac{1}{r}-\frac{1}{s})}=1
\end{equation*}
we get 
\begin{align*}
    1&=\bigg(\displaystyle\stackinset{c}{}{c}{}{-\mkern4mu}{\displaystyle\int_Q}(\nu_{\vec{w}}^{-1})^{\frac{1}{m(\frac{1}{r}-\frac{1}{s})}}(w_1^{\frac{1}{m(\frac{1}{r}-\frac{1}{s})}}\cdots w_m^{\frac{1}{m(\frac{1}{r}-\frac{1}{s})}})\bigg)^{m(\frac{1}{r}-\frac{1}{s})}\\
    &\lesssim|Q|^{-m(\frac{1}{r}-\frac{1}{s})}\Bigg\|(\nu_{\vec{w}}^{-1})^{\frac{1}{m(\frac{1}{r}-\frac{1}{s})}}\chi_Q\Bigg\|_{\frac{m(\frac{1}{r}-\frac{1}{s})}{\frac{1}{r}-\frac{1}{p(\cdot)}}}^{m(\frac{1}{r}-\frac{1}{s})}\prod_{j=1}^m\Bigg\|w_j^{\frac{1}{m(\frac{1}{r}-\frac{1}{s})}}\chi_Q\Bigg\|_{\frac{m(\frac{1}{r}-\frac{1}{s})}{\frac{1}{p_j(\cdot)}-\frac{1}{\sigma_j}}}^{m(\frac{1}{r}-\frac{1}{s})}\\
    &=|Q|^{-m(\frac{1}{r}-\frac{1}{s})}\|\nu_{\vec{w}}^{-1}\chi_Q\|_{\frac{1}{\frac{1}{r}-\frac{1}{p(\cdot)}}}\prod_{j=1}^m\|w_j\chi_Q\|_{\frac{1}{\frac{1}{p_j(\cdot)}-\frac{1}{\sigma_j}}}.
\end{align*}
Hence, this implies that
\begin{align*}
    \|\nu_{\vec{w}}\chi_Q\|_{\frac{1}{\frac{1}{q(\cdot)}-\frac{1}{s}}}\prod_{j=1}^m\|w_j^{-1}\chi_Q\|_{\frac{1}{\frac{1}{r_j}-\frac{1}{p_j(\cdot)}}}
    &\lesssim|Q|^{-m(\frac{1}{r}-\frac{1}{s})}\|\nu_{\vec{w}}\chi_Q\|_{\frac{1}{\frac{1}{q(\cdot)}-\frac{1}{s}}}\|\nu_{\vec{w}}^{-1}\chi_Q\|_{\frac{1}{\frac{1}{r}-\frac{1}{p(\cdot)}}}\\
    &\qquad\times\prod_{j=1}^m\|w_j\chi_Q\|_{\frac{1}{\frac{1}{p_j(\cdot)}-\frac{1}{\sigma_j}}}\|w_j^{-1}\chi_Q\|_{\frac{1}{\frac{1}{r_j}-\frac{1}{p_j(\cdot)}}}\\
    &\lesssim|Q|^{-m(\frac{1}{r}-\frac{1}{s})}|Q|^{-\gamma+\frac{1}{r}-\frac{1}{s}}|Q|^{m(\frac{1}{r}-\frac{1}{s})}.
\end{align*}
Thus,
\begin{equation*}
    |Q|^{\gamma-(\frac{1}{r}-\frac{1}{s})}\|\nu_{\vec{w}}\chi_Q\|_{\frac{1}{\frac{1}{q(\cdot)}-\frac{1}{s}}}\prod_{j=1}^m\|w_j^{-1}\chi_Q\|_{\frac{1}{\frac{1}{r_j}-\frac{1}{p_j(\cdot)}}}\lesssim 1.
\end{equation*}
Taking a supremum over all cubes $Q$ we conclude that $\vec{w}\in\mathcal{A}_{({\vec{p}(\cdot),q(\cdot)}),(\vec{r},s)}$, which completes the proof. 
\end{proof}

\begin{remark}
In view of the following Lemma \ref{lem:rescale}, notice that Lemma \ref{lem:Apqrs char.} extends \cite[Lemma 1.2]{CHSW2025} and \cite[Proposition 4.7]{CG2020}. In particular, our result deals with variable weight classes that contain the linear/multilinear full range diagonal/off-diagonal weight classes of \cite[Lemma 1.2]{CHSW2025}.
\end{remark}

\begin{lemma}\label{lem:rescale}
Let $(\vec{p}(\cdot),\vec{q}(\cdot),\vec{r},s)$ be a $m$-admissible quadruple. Let also $a\in(0,\infty)$. Then, we have
\begin{equation*}
    \vec{w} \in \mathcal{A}_{(\vec{p}(\cdot),q(\cdot)),(\vec{r},s)} \Leftrightarrow 
    \vec{w}^{\frac{1}{a}} \in \mathcal{A}_{(a\vec{p}(\cdot),aq(\cdot)),(a\vec{r},as)}.
\end{equation*}
Moreover, in this case there holds
\begin{equation*}
    [\vec{w}^{\frac{1}{a}}]_{\mathcal{A}_{(a\vec{p}(\cdot),aq(\cdot)),(a\vec{r},as)}} = [\vec{w}]_{\mathcal{A}_{(\vec{p}(\cdot),q(\cdot)),(\vec{r},s)}}^{\frac{1}{a}}.
\end{equation*}
\end{lemma}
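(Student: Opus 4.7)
The plan is to perform a direct computation using Lemma~\ref{lem:homog} to rescale each of the norms and the $|Q|$-factor appearing in the definition of $[\vec{w}]_{\mathcal{A}_{(\vec{p}(\cdot),q(\cdot)),(\vec{r},s)}}$, and then recognize that the resulting quantity is exactly $[\vec{w}]_{\mathcal{A}_{(\vec{p}(\cdot),q(\cdot)),(\vec{r},s)}}^{1/a}$. Since the identity of characteristic constants automatically yields the equivalence of membership in the two weight classes, once the quantitative identity is established the proof is complete.

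First I would verify that if $(\vec{p}(\cdot),q(\cdot),\vec{r},s)$ is $m$-admissible, then so is $(a\vec{p}(\cdot),aq(\cdot),a\vec{r},as)$. This is a direct check: rescaling by $a$ preserves the inequalities $r_j<(p_j)_{-}$ and $q_{+}<s$, and the relation
\begin{equation*}
\frac{1}{ap(\cdot)}-\frac{1}{aq(\cdot)}=\frac{\gamma}{a}=:\gamma_{\mathrm{new}}
\end{equation*}
shows the new constant lies in $[0,\infty)$. Likewise $\frac{1}{ar}-\frac{1}{as}=\frac{1}{a}\bigl(\frac{1}{r}-\frac{1}{s}\bigr)$, so the $|Q|$-exponent rescales by a factor of $1/a$, giving $|Q|^{(1/a)[\gamma-(1/r-1/s)]}$.

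Next I would rescale the norms. Since $\chi_Q^{1/a}=\chi_Q$, one has $\nu_{\vec{w}^{1/a}}\chi_Q=(\nu_{\vec{w}}\chi_Q)^{1/a}$ and $(w_j^{1/a})^{-1}\chi_Q=(w_j^{-1}\chi_Q)^{1/a}$. Applying Lemma~\ref{lem:homog} with the scaling exponent $1/a$, and with the variable exponents $\frac{a}{1/q(\cdot)-1/s}$ and $\frac{a}{1/r_j-1/p_j(\cdot)}$ which are exactly those required by the rescaled $\mathcal{A}$-characteristic, yields
\begin{equation*}
\bigl\|\nu_{\vec{w}^{1/a}}\chi_Q\bigr\|_{\frac{1}{\frac{1}{aq(\cdot)}-\frac{1}{as}}}=\bigl\|\nu_{\vec{w}}\chi_Q\bigr\|_{\frac{1}{\frac{1}{q(\cdot)}-\frac{1}{s}}}^{1/a},
\end{equation*}
and similarly
\begin{equation*}
\bigl\|(w_j^{1/a})^{-1}\chi_Q\bigr\|_{\frac{1}{\frac{1}{ar_j}-\frac{1}{ap_j(\cdot)}}}=\bigl\|w_j^{-1}\chi_Q\bigr\|_{\frac{1}{\frac{1}{r_j}-\frac{1}{p_j(\cdot)}}}^{1/a}.
\end{equation*}

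Multiplying these identities together with the rescaled $|Q|$-factor and taking the supremum over all cubes $Q\subset\R^n$ gives
\begin{equation*}
[\vec{w}^{1/a}]_{\mathcal{A}_{(a\vec{p}(\cdot),aq(\cdot)),(a\vec{r},as)}}=\sup_{Q}\Bigl(|Q|^{\gamma-(1/r-1/s)}\bigl\|\nu_{\vec{w}}\chi_Q\bigr\|_{\frac{1}{\frac{1}{q(\cdot)}-\frac{1}{s}}}\prod_{j=1}^{m}\bigl\|w_j^{-1}\chi_Q\bigr\|_{\frac{1}{\frac{1}{r_j}-\frac{1}{p_j(\cdot)}}}\Bigr)^{1/a}=[\vec{w}]_{\mathcal{A}_{(\vec{p}(\cdot),q(\cdot)),(\vec{r},s)}}^{1/a},
\end{equation*}
which proves both the equivalence and the quantitative identity. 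There is no real obstacle here; the only mild care needed is in tracking the identity $\chi_Q=\chi_Q^{1/a}$ and in matching the rescaled variable exponents to the form demanded by Lemma~\ref{lem:homog}.
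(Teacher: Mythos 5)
Your proposal is correct and follows essentially the same route as the paper: both apply Lemma~\ref{lem:homog} with scaling exponent $1/a$ to each norm in \eqref{eq:Apqrs constant}, observe that the $|Q|$-exponent rescales by $1/a$, and take the supremum over cubes. Your additional check that $(a\vec{p}(\cdot),aq(\cdot),a\vec{r},as)$ is again $m$-admissible is a harmless extra detail the paper leaves implicit.
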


\begin{proof}
The proof follows immediately by recalling \eqref{eq:Apqrs constant} and using Lemma \ref{lem:homog}. Indeed, we have
\begin{align*}
    &|Q|^{\frac{\gamma}{a} - \left(\frac{1}{ar}-\frac{1}{as}\right)}
    \Vert \nu_{\vec{w}}^{\frac{1}{a}}\chi_{Q}\Vert_{\frac{1}{\frac{1}{aq(\cdot)}-\frac{1}{as}}}\prod_{j=1}^{m}\Vert w_j^{-\frac{1}{a}}\chi_{Q}\Vert_{\frac{1}{\frac{1}{ar_j}-\frac{1}{ap_j(\cdot)}}}\\
    &\qquad=\left(|Q|^{\gamma - \left(\frac{1}{r}-\frac{1}{s}\right)}
    \Vert \nu_{\vec{w}}\chi_{Q}\Vert_{\frac{1}{\frac{1}{q(\cdot)}-\frac{1}{s}}}\prod_{j=1}^{m}\Vert w_j^{-1}\chi_{Q}\Vert_{\frac{1}{\frac{1}{r_j}-\frac{1}{p_j(\cdot)}}}\right)^{\frac{1}{a}}.
\end{align*}
Taking a supremum over all cubes $Q$ we obtain the desired conclusion.
\end{proof}

\section{Factorization results for multilinear Muckenhoupt weights with variable exponents}\label{sec: factorizations}

One of the key ingredients for the proof of Theorem \ref{thm:main_result} is the following factorization result for the variable multilinear Muckenhoupt weights. The proof of this result is based on similar ideas of \cite{HL2023, HL2022, LO2025}.

\begin{lemma}\label{lem:KeyLemma}
Let $(\vec{p}(\cdot),q(\cdot),\vec{r},s)$ and $(\vec{p}_1(\cdot),q_1(\cdot),\vec{r},s)$ be proper $m$-admissible quadruples such that
\begin{equation*}
    \frac{1}{p_1(\cdot)}-\frac{1}{q_1(\cdot)}=\frac{1}{p(\cdot)}-\frac{1}{q(\cdot)}=\gamma\in[0,\infty).
\end{equation*}
Moreover, let
\begin{equation*}
    \vec{w}\in\mathcal{A}_{(\vec{p}(\cdot),q(\cdot)),(\vec{r},s)}
\end{equation*}
and
\begin{equation*}
    \vec{w}_1\in\mathcal{A}_{(\vec{p}_1(\cdot),q_1(\cdot)),(\vec{r},s)}.
\end{equation*}
Given any $\theta\in(0,1)$, define $\vec{p}_0(\cdot)\in(\mathscr{P}_0)^m$, $q_0(\cdot)\in\mathscr{P}_0$ by
\begin{align*}
    \frac{1}{p_j(\cdot)}&=\frac{1-\theta}{p_{0,j}(\cdot)}+\frac{\theta}{p_{1,j}(\cdot)},\quad j=1,\ldots,m,\\
    \frac{1}{q(\cdot)}&=\frac{1-\theta}{q_0(\cdot)}+\frac{\theta}{q_1(\cdot)},
\end{align*}
as well as the vector of weights $\vec{w}_0$ by
\begin{align*}
    w_j&=w_{0,j}^{1-\theta}w_{1,j}^{\theta},\quad j=1,\ldots,m.
\end{align*}
Then, there exists $\eta>0$ such that for all $\theta\in(0,\eta)$ all of the following hold:
\begin{enumerate}
    \item $(\vec{p}_0(\cdot), q_0(\cdot), \vec{r}, s)$ is a proper $m$-admissible quadruple with 
\begin{equation*}
    \quad\frac{1}{p_0(\cdot)}-\frac{1}{q_0(\cdot)}=\gamma.
\end{equation*}

    \item There holds
\begin{equation*}
    \vec{w}_0\in\mathcal{A}_{(\vec{p}_0(\cdot),q_0(\cdot)),(\vec{r},s)}.
\end{equation*}

    \item If $(q_1)_{-},q_{-}>1$, then also $(q_{0})_{-}>1$.
\end{enumerate}
\end{lemma}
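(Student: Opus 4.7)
The plan is to verify claims (1), (2), (3) in turn, with (2) as the principal difficulty and (1), (3) following by continuity in $\theta$.

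For (1), I would solve the convex-combination relations for $\vec{p}_0(\cdot)$ and $q_0(\cdot)$:
\[
\frac{1}{p_{0,j}(\cdot)}=\frac{1}{1-\theta}\Big(\frac{1}{p_j(\cdot)}-\frac{\theta}{p_{1,j}(\cdot)}\Big),\qquad \frac{1}{q_0(\cdot)}=\frac{1}{1-\theta}\Big(\frac{1}{q(\cdot)}-\frac{\theta}{q_1(\cdot)}\Big).
\]
Log-Hölder continuity of $p_{0,j}(\cdot),q_0(\cdot)$ follows since $\mathrm{LH}$ is closed under linear combinations and all of $p_j,p_{1,j},q,q_1$ belong to $\mathrm{LH}$. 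Membership in $\mathscr{P}_0$ and the range conditions $r_j<(p_{0,j})_{-}$ and $(q_0)_{+}<s$ hold strictly at $\theta=0$ (they reduce to the admissibility of $(\vec{p},q,\vec{r},s)$), hence persist for $\theta$ below some threshold by continuity. The identity $1/p_0(\cdot)-1/q_0(\cdot)=\gamma$ is a direct algebraic consequence of $1/p-1/q=1/p_1-1/q_1=\gamma$. Claim (3) follows by the same continuity argument: $(q_0)_{-}\to q_{-}$ as $\theta\to 0^{+}$ and $q_{-}>1$ strictly by hypothesis.

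For (2), I would first apply Lemma~\ref{lem:Apqrs char.} to reduce the multilinear Muckenhoupt condition on $\vec{w}_0$ to the conjunction of the scalar conditions $w_{0,j}\in\mathcal{A}_{p_{0,j}(\cdot),(r_j,\sigma_j)}$ for each $j$, and $\nu_{\vec{w}_0}\in\mathcal{A}_{(p_0(\cdot),q_0(\cdot)),(r,s)}$. The same reduction applied to $\vec{w}$ and $\vec{w}_1$ produces analogous scalar conditions on $w_j,w_{1,j},\nu_{\vec{w}},\nu_{\vec{w}_1}$ at the corresponding parameters. Since $w_{0,j}=w_j^{1/(1-\theta)}w_{1,j}^{-\theta/(1-\theta)}$ and $\nu_{\vec{w}_0}=\nu_{\vec{w}}^{1/(1-\theta)}\nu_{\vec{w}_1}^{-\theta/(1-\theta)}$, the task reduces to a purely scalar factorization claim: given positive weights $u,u_1$ satisfying the variable-exponent $\mathcal{A}$-condition with compatible parameters and given $u_0:=u^{1/(1-\theta)}u_1^{-\theta/(1-\theta)}$, show that $u_0$ satisfies the corresponding $\mathcal{A}$-condition with the interpolated exponents.

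This scalar factorization is the variable-exponent analog of the key factorizations established in the linear setting in \cite{HL2023} and \cite[Lemma~3.1]{LO2025} (see also \cite{HL2022}). The strategy: by Lemma~\ref{lem:homog}, both norms in the $\mathcal{A}$-characteristic of $u_0$ are recast as norms of $u\,u_1^{-\theta}$ and $u^{-1}u_1^{\theta}$ at rescaled variable exponents. \emph{The main obstacle}, and the source of the smallness assumption on $\theta$, is that a naive application of Lemma~\ref{Gen. Holder's ineq.} to split either of these products into separate norms of $u$ and $u_1$ forces one of the conjugate exponents to be negative, so a single direct Hölder step cannot succeed. This is overcome by estimating the \emph{product} of the two norms simultaneously, pairing the $u_1^{-\theta}$ factor from the first with the $u_1^{\theta}$ factor from the second, so that the resulting Hölder exponents are $O(\theta)$ perturbations of the positive exponents appearing in the $\mathcal{A}$-conditions for $u$ and $u_1$. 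For $\theta<\eta$ sufficiently small, all the relevant exponents remain strictly positive, and the $\mathcal{A}$-characteristics of $u$ and $u_1$ absorb the resulting factors, yielding a bound of the form $[u_0]_{\mathcal{A}}\lesssim [u]_{\mathcal{A}}^{1/(1-\theta)}\,[u_1]_{\mathcal{A}}^{\theta/(1-\theta)}$. Lifting back through Lemma~\ref{lem:Apqrs char.} yields $\vec{w}_0\in\mathcal{A}_{(\vec{p}_0(\cdot),q_0(\cdot)),(\vec{r},s)}$, and $\eta$ is taken as the minimum of all the smallness thresholds arising above.
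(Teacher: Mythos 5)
Your proposal is correct in outline and shares the paper's architecture: items (1) and (3) follow from elementary monotonicity of the interpolated reciprocal exponents in $\theta$ together with closedness of $\mathrm{LH}$ under linear combinations (the paper instead extracts these from the cited linear lemma and from the proof of \cite[Lemma~3.1]{LO2025}, but your continuity argument is sound), and item (2) is obtained by using Lemma~\ref{lem:Apqrs char.} to split the multilinear condition into the $m$ scalar conditions $w_{0,j}\in\mathcal{A}_{p_{0,j}(\cdot),(r_j,\sigma_j)}$ together with $\nu_{\vec{w}_0}\in\mathcal{A}_{(p_0(\cdot),q_0(\cdot)),(r,s)}$, factorizing each, and recombining. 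The substantive divergence is in how the scalar factorization is handled. The paper does \emph{not} re-prove it: it first rescales each component condition via Lemma~\ref{lem:rescale} (choosing $a$ with $ar_j\geq1$ for the $j$-th coordinate and $b>\max\{\gamma,1/r\}$ for the product weight), precisely because the component parameters generally violate the hypotheses $r\geq1$ and $\gamma\in[0,1)$ of the linear factorization lemma (Lemma~\ref{lem:factor_linear}, i.e.\ \cite[Lemma~3.20]{LO2025}) --- note $1/r=\sum_j 1/r_j$ gives $r=1/m<1$ already for $\vec{r}=\vec{1}$, and $\gamma$ may exceed $1$ --- and then quotes that lemma as a black box before rescaling back. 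You instead sketch a direct proof of the scalar factorization; your diagnosis of why a single Hölder step fails and why pairing the two Luxemburg norms with $O(\theta)$-perturbed exponents rescues it is exactly the mechanism behind \cite[Lemma~3.20]{LO2025} and \cite{HL2023}. But this step carries the entire analytic weight of the lemma and is left unexecuted: to complete your route you must either insert the paper's rescaling reduction so the linear lemma applies verbatim, or carry out the exponent bookkeeping in the unnormalized range, where the exponents $\frac{1}{1/r_j-1/p_j(\cdot)}$ and $\frac{1}{1/q(\cdot)-1/s}$ may lie below $1$ and every Hölder step must be justified for $\mathscr{P}_0$-exponents via Lemma~\ref{Gen. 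Holder's ineq.}, checking along the way that all intermediate exponents stay positive and that properness of the intermediate quadruple is preserved. As written, your item (2) is a correct plan rather than a complete argument.
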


The proof relies on \cite[Lemma 3.20]{LO2025}. Since the notation used there is slightly different from the one used here (see \eqref{itm:Aprs} of Remark~\ref{rmk:compare_definitions} for a comparison), we restate \cite[Lemma 3.20]{LO2025} for the reader's convenience in our notation.

\begin{lemma}[{\cite[Lemma 3.20]{LO2025}}]\label{lem:factor_linear}
Let $(p(\cdot),q(\cdot),r,s)$ and $(p_1(\cdot),q_1(\cdot),r,s)$ be proper $1$-admissible quadruples such that $r\geq1$ and
\begin{equation*}
    \frac{1}{p_1(\cdot)}-\frac{1}{q_1(\cdot)}=\frac{1}{p(\cdot)}-\frac{1}{q(\cdot)}=\gamma\in[0,1).
\end{equation*}
Let also
\begin{equation*}
    w\in\mathcal{A}_{(p(\cdot),q(\cdot)),(r,s)}
\end{equation*}
and
\begin{equation*}
    \vec{w}_1\in\mathcal{A}_{(p_1(\cdot),q_1(\cdot)),(r,s)}.
\end{equation*}
Given any $\theta\in(0,1)$, define $p_0(\cdot)\in\mathscr{P}_0$, $q_0(\cdot)\in\mathscr{P}_0$ by
\begin{align*}
    \frac{1}{p(\cdot)}&=\frac{1-\theta}{p_{0}(\cdot)}+\frac{\theta}{p_{1}(\cdot)},\\
    \frac{1}{q(\cdot)}&=\frac{1-\theta}{q_0(\cdot)}+\frac{\theta}{q_1(\cdot)},
\end{align*}
as well as the weight $w_0$ by
\begin{align*}
    w&=w_{0}^{1-\theta}w_{1}^{\theta}.
\end{align*}
Then, there exists $\eta>0$, such that for all $\theta\in(0,\eta)$ all of the following hold:
\begin{enumerate}
    \item $(p_0(\cdot),q_0(\cdot),r,s)$ is a proper $1$-admissible quadruple with $\frac{1}{p_0(\cdot)} - \frac{1}{q_0(\cdot)} = \gamma$.

    \item There holds
\begin{equation*}
    w_0\in\mathcal{A}_{(p_0(\cdot),q_0(\cdot)),(r,s)}.
\end{equation*}
\end{enumerate}
\end{lemma}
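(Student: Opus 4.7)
The plan is to split the conclusion into two parts: (i) verifying that $(p_0(\cdot), q_0(\cdot), r, s)$ is a proper $1$-admissible quadruple with $\tfrac{1}{p_0(\cdot)} - \tfrac{1}{q_0(\cdot)} = \gamma$, and (ii) verifying the weight condition $w_0 \in \mathcal{A}_{(p_0(\cdot), q_0(\cdot)),(r,s)}$. For (i), I would invert the convex-combination definitions to obtain $\tfrac{1}{p_0(\cdot)} = \tfrac{1}{1-\theta}\bigl(\tfrac{1}{p(\cdot)} - \tfrac{\theta}{p_1(\cdot)}\bigr)$ and analogously for $q_0(\cdot)$. A direct computation using $\tfrac{1}{p(\cdot)} - \tfrac{1}{q(\cdot)} = \tfrac{1}{p_1(\cdot)} - \tfrac{1}{q_1(\cdot)} = \gamma$ then yields $\tfrac{1}{p_0(\cdot)} - \tfrac{1}{q_0(\cdot)} = \gamma$. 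The log-H\"older regularity of $p_0(\cdot)$ and $q_0(\cdot)$ follows from closure of $\mathrm{LH}$ under linear combinations of reciprocals, combined with \cite[Proposition 2.3]{CF2013}. The strict range bounds $r < (p_0)_-$ and $(q_0)_+ < s$ are open conditions in $\theta$: since $p_1(\cdot)$ and $q_1(\cdot)$ are bounded above and away from $0$, $\tfrac{1}{p_0(\cdot)}$ and $\tfrac{1}{q_0(\cdot)}$ converge uniformly to $\tfrac{1}{p(\cdot)}$ and $\tfrac{1}{q(\cdot)}$ as $\theta \to 0^+$, so the strict inequalities $r < p_-$ and $q_+ < s$ transfer to $(p_0, q_0)$ for all $\theta$ below some first threshold $\eta_1$.

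For (ii), the key algebraic identity is $w_0 = w^{1/(1-\theta)} w_1^{-\theta/(1-\theta)}$, obtained by inverting $w = w_0^{1-\theta} w_1^\theta$. I would first reduce to the case $r = 1$ via Lemma~\ref{lem:rescale} with rescaling parameter $r$. Then, applying generalized H\"older for variable exponents (Lemma~\ref{Gen. Holder's ineq.}) to the pointwise factorizations
\begin{equation*}
    w_0 \chi_Q = (w\chi_Q)^{1/(1-\theta)} (w_1^{-1}\chi_Q)^{\theta/(1-\theta)}, \quad w_0^{-1}\chi_Q = (w^{-1}\chi_Q)^{1/(1-\theta)} (w_1\chi_Q)^{\theta/(1-\theta)},
\end{equation*}
I would split each of the two variable-exponent norms defining $[w_0]_{\mathcal{A}_{(p_0(\cdot),q_0(\cdot)),(r,s)}}$ into a product of norms of $w, w^{-1}, w_1, w_1^{-1}$ (against suitable variable exponents), after which the $\mathcal{A}$-conditions for $w$ and $w_1$ would control the resulting factors and yield a bound on $[w_0]_{\mathcal{A}_{(p_0(\cdot),q_0(\cdot)),(r,s)}}$ uniform in the cube $Q$.

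The main obstacle is aligning the H\"older exponents so that the resulting bound is genuinely uniform in $Q$. A direct two-factor H\"older split is obstructed by the fact that admissibility (Lemma~\ref{lem:range_exponents}) forces $\tfrac{1}{r} - \tfrac{1}{s} > \gamma$, so the gap $\tfrac{1}{r} - \tfrac{1}{s} - \gamma > 0$ between the $w$-type and $w_1$-type norms cannot be reconciled with the scalar constraint $\tfrac{1}{P_1(\cdot)} + \tfrac{1}{P_2(\cdot)} = \tfrac{1}{A_0(\cdot)}$ on its own; concretely, it leaves a residual $|Q|$-power whose exponent is strictly positive of order $\theta$. I would resolve this either by (a) promoting to a three-factor H\"older with an auxiliary $\Vert \chi_Q\Vert_{P(\cdot)}$ term --- estimated via $\Vert \chi_Q\Vert_{P(\cdot)} \approx |Q|^{1/P_Q}$ for $P(\cdot) \in \mathrm{LH}$ (cf.~\cite[Corollary 4.5.9]{DHHR2011}) --- with its harmonic-mean exponent tuned to cancel the $|Q|$-discrepancy, or (b) first invoking openness of the $\mathcal{A}_{(p(\cdot),q(\cdot)),(r,s)}$ class (as established in~\cite{Nieraeth_2023}) to gain a margin of slack in the $\mathcal{A}$-conditions for $w$ and $w_1$, after which the two-factor H\"older closes for $\theta$ small. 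The final cutoff $\eta > 0$ is then the minimum of the two thresholds arising from (i) and (ii).
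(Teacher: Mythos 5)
You should first be aware that the paper contains no proof of this statement: Lemma~\ref{lem:factor_linear} is imported verbatim (up to notation) from \cite[Lemma 3.20]{LO2025}, and the only verification the paper supplies is the observation, via Remark~\ref{rmk:compare_definitions}, that the notational translation to the class $\mathcal{A}_{(p(\cdot),q(\cdot)),(r,s)}$ is faithful. So your attempt is really a from-scratch proof of the cited external result. Your part (i) is fine: the identity $\frac{1}{p_0(\cdot)}-\frac{1}{q_0(\cdot)}=\gamma$, the log-H\"older regularity of $p_0(\cdot),q_0(\cdot)$, and the transfer of the open conditions $r<(p_0)_-$, $(q_0)_+<s$ for small $\theta$ all work as you describe.

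Part (ii), however, has a genuine gap, and it sits exactly at the heart of the lemma. You correctly diagnose the obstruction: pairing $\Vert w_0\chi_Q\Vert_{\frac{1}{\frac{1}{q_0(\cdot)}-\frac{1}{s}}}$ with $\Vert w\chi_Q\Vert_{\frac{1}{\frac{1}{q(\cdot)}-\frac{1}{s}}}^{\frac{1}{1-\theta}}$ and $\Vert w_1^{-1}\chi_Q\Vert_{\frac{1}{\frac{1}{r}-\frac{1}{p_1(\cdot)}}}^{\frac{\theta}{1-\theta}}$ gives reciprocal exponents summing to $\bigl(\frac{1}{q_0(\cdot)}-\frac{1}{s}\bigr)+\frac{\theta}{1-\theta}\bigl(\frac{1}{r}-\frac{1}{s}-\gamma\bigr)$, and similarly for the dual norm, so the available integrability \emph{exceeds} the target by a positive amount of order $\theta$ (equivalently, an uncontrolled positive power of $|Q|$ survives, which is fatal when taking the supremum over large cubes). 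But your fix (a) cannot work as stated: inserting an additional factor $\Vert\chi_Q\Vert_{P(\cdot)}\approx |Q|^{1/P_Q}$ into the same H\"older application only \emph{increases} the reciprocal-exponent sum and contributes yet another nonnegative power of $|Q|$; what is actually required is to control a norm of $w\chi_Q$ (or of $w_1^{-1}\chi_Q$) at an exponent slightly \emph{larger} than the one furnished by the $\mathcal{A}$-condition, and H\"older can only degrade exponents, never upgrade them. Your fix (b) — a quantitative self-improvement/openness property of the variable-exponent weight classes — is indeed the idea on which a proof must rest (and is the nontrivial input behind \cite[Lemma 3.20]{LO2025}), but in your proposal it is only gestured at: you do not state the precise openness needed for the off-diagonal, limited-range class $\mathcal{A}_{(p(\cdot),q(\cdot)),(r,s)}$ applied to both $w$ and $w_1$, you do not verify that it is available in the generality you invoke, and you do not carry out the bookkeeping (the slack in the exponents, the $\Vert\chi_Q\Vert$-type correction and the resulting balance of $|Q|$-powers uniformly over small and large cubes via log-H\"older continuity) that actually determines the threshold $\eta$. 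Since overcoming precisely this order-$\theta$ mismatch is the entire content of the lemma, the proposal as written does not yet constitute a proof.
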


It might seem at a first glance that in the way we have rephrased \cite[Lemma 3.20]{LO2025} and we have missed assumptions on the ranges of the exponents stipulated in \cite[Lemma 3.20]{LO2025}. However, as explained in \eqref{itm:Aprs} of Remark~\ref{rmk:compare_definitions}, this is not the case.

Now we prove Lemma~\ref{lem:KeyLemma}.

\begin{proof}[Proof of Lemma~\ref{lem:KeyLemma}]
Notice that once $\theta$ is fixed, $\vec{p}_0(\cdot),q_0(\cdot)$ and $\vec{w}_0(\cdot)$ are uniquely determined. It is also trivial to verify that
\begin{equation*}
    \frac{1}{p_0(\cdot)}-\frac{1}{q_0(\cdot)}=\gamma.
\end{equation*}

\medskip

\textbf{Step 1.} Fix $j\in\{1,\ldots,m\}$. Pick $a\in(0,\infty)$ with $ar_j\geq1$. Then, the quadruples
\begin{equation*}
    (ap_j(\cdot),ap_j(\cdot),ar_{j},m\sigma_j)\quad\text{and}\quad(ap_{1,j}(\cdot),ap_{1,j}(\cdot),ar_{j},a\sigma_{j})
\end{equation*}
are proper $1$-admissible with $ar_j\geq1$ and
\begin{equation*}
    \frac{1}{ap_j(\cdot)} - \frac{1}{ap_j(\cdot)} = \frac{1}{ap_{1,j}(\cdot)} - \frac{1}{ap_{1,j}(\cdot)} = 0 \in [0,1).
\end{equation*}
Moreover, in view of our assumptions, by combining Lemma~\ref{lem:Apqrs char.} with Lemma~\ref{lem:rescale}, we deduce that
\begin{equation*}
    w_{j}^{\frac{1}{a}}\in\mathcal{A}_{ap_{j}(\cdot),(ar_j,a\sigma_j)}
\end{equation*}
and
\begin{equation*}
    w_{1,j}^{\frac{1}{a}}\in\mathcal{A}_{ap_{1,j}(\cdot),(ar_j,a\sigma_j)}.
\end{equation*}
Clearly
\begin{equation*}
    w_{j}^{\frac{1}{a}} = (w_{0,j}^{\frac{1}{a}})^{1-\theta}(w_{1,j}^{\frac{1}{a}})^{\theta}.
\end{equation*}
Thus, by applying Lemma~\ref{lem:factor_linear}, we deduce that there exists $\eta_j>0$, so that for all $\theta\in(0,\eta_j)$, $(ap_{0,j}(\cdot),ap_{0,j}(\cdot),ar_{j},a\sigma_{j})$ is a proper 1-admissible quadruple and $w_{0}^{\frac{1}{a}} \in \mathcal{A}_{ap_{0,j}(\cdot),(ar_j,a\sigma_j)}$. By Lemma~\ref{lem:rescale}, this means $w_{0} \in \mathcal{A}_{p_{0,j}(\cdot),(r_j,\sigma_j)}$. Moreover, we have $ap_{0,j}(\cdot)\in  \mathrm{LH}$, therefore also $p_{0,j}(\cdot)\in\mathrm{LH}$.

\medskip

\textbf{Step 2.} Pick $b\in(0,\infty)$ with
\begin{equation*}
    b > \max\left\{\gamma, \frac{1}{r}\right\}.
\end{equation*}
Then, the quadruples
\begin{equation*}
    (bp(\cdot),bq(\cdot),br,bs)\quad\text{and}\quad(bp_{1}(\cdot),bq_{1}(\cdot),br,bs)
\end{equation*}
are proper $1$-admissible with $br\geq1$ and
\begin{equation*}
    \frac{1}{bp(\cdot)} - \frac{1}{bq(\cdot)} = \frac{1}{bp_{1}(\cdot)} - \frac{1}{bq_{1}(\cdot)} = \frac{\gamma}{b} \in [0,1).
\end{equation*}
Moreover, in view of our assumptions, by combining Lemma~\ref{lem:Apqrs char.} with Lemma~\ref{lem:rescale}, we deduce that
\begin{equation*}
    \nu_{\vec{w}}^{\frac{1}{b}}\in\mathcal{A}_{(bp(\cdot),bq(\cdot)),(br,bs)}
\end{equation*}
and
\begin{equation*}
    \nu_{\vec{w}_1}^{\frac{1}{b}}\in\mathcal{A}_{(bp_1(\cdot),bq_1(\cdot)),(br,bs)}.
\end{equation*}
Clearly
\begin{equation*}
    \nu_{\vec{w}}^{\frac{1}{b}} = (\nu_{\vec{w}_0}^{\frac{1}{b}})^{1-\theta}(\nu_{\vec{w}_1}^{\frac{1}{b}})^{\theta}.
\end{equation*}
Thus, by applying Lemma~\ref{lem:factor_linear}, we deduce that there exists $\eta_{\nu}>0$, so that for all $\theta\in(0,\eta_{\nu})$, $(bp_0(\cdot),bq_0(\cdot),br,bs)$ is a proper 1-admissible quadruple and $\nu_{\vec{w}_0}^{\frac{1}{b}}\in\mathcal{A}_{(bp_0(\cdot),bq_0(\cdot)),(br,bs)}$. By Lemma~\ref{lem:rescale}, this means $\nu_{\vec{w}_0}\in\mathcal{A}_{(p_0(\cdot),q_0(\cdot)),(r,s)}$. Moreover, we have $bp_{0}(\cdot),bq_0(\cdot)\in  \mathrm{LH}$, therefore also $p_{0}(\cdot),q_0(\cdot)\in\mathrm{LH}$.

\medskip

\textbf{Step 3.} Take $\eta := \min\{\eta_1,\ldots,\eta_m,\eta_{\nu}\}$. Then, the quadruple $(\vec{p}_0(\cdot),q_0(\cdot),\vec{r},s)$ is proper $m$-admissible. Moreover, we have
\begin{equation*}
    w_{0} \in \mathcal{A}_{p_{0,j}(\cdot),(r_j,\sigma_j)},\quad\forall j = 1,\ldots,m
\end{equation*}
and
\begin{equation*}
    \nu_{\vec{w}_0}\in\mathcal{A}_{(p_0(\cdot),q_0(\cdot)),(r,s)}.
\end{equation*}
By Lemma~\ref{lem:Apqrs char.} we deduce that $\vec{w}_0\in\mathcal{A}_{(\vec{p}_0(\cdot),q_0(\cdot)),(\vec{r},s)}$.

\medskip

\textbf{Step 4.} If $(q_1)_{-},q_{-}>1$, then by the proof of \cite[Lemma 3.1]{LO2025} we have that by picking $\eta$ even smaller we can ensure in addition that $(q_{0})_{-}>1$.
\end{proof}

In view of later applications of our extrapolation results, we also need a varied version of a special case of Lemma~\ref{lem:KeyLemma}. Even though the changes are restricted essentially to the notational level, we include a precise statement and a short explanation for the sake of clarity.

\begin{lemma}\label{lem:KeyLemma_with_t}
Let $(\vec{p}(\cdot),q(\cdot),\vec{1},\infty)$ and $(\vec{p}_1(\cdot),q_1(\cdot),\vec{1},\infty)$ be proper $m$-admissible quadruples with
\begin{equation*}
    \frac{1}{p(\cdot)}-\frac{1}{q(\cdot)}=\frac{1}{p_1(\cdot)}-\frac{1}{q_1(\cdot)}=\gamma\in[0,\infty).
\end{equation*}
Let $t\in(0,\infty)$ be a fixed constant with
\begin{equation*}
    t<\min\{(p_j)_{-}:~j=1,\ldots,m\}
\end{equation*}
and
\begin{equation*}
    t<\min\{(p_{1,j})_{-}:~j=1,\ldots,m\}.
\end{equation*}
Let
\begin{equation*}
    \vec{w}\in\mathcal{A}_{\vec{p}(\cdot),q(\cdot)}
\end{equation*}
and
\begin{equation*}
    \vec{w}_1\in\mathcal{A}_{\vec{p}_1(\cdot),q_1(\cdot)}
\end{equation*}
such that in addition one has
\begin{equation*}
    \vec{w}^{t}\in\mathcal{A}_{\frac{\vec{p}(\cdot)}{t},\frac{q(\cdot)}{t}}
\end{equation*}
and
\begin{equation*}
    \vec{w}_1\in\mathcal{A}_{\frac{\vec{p}_1(\cdot)}{t},\frac{q_1(\cdot)}{t}}.
\end{equation*}
Given any $\theta\in(0,1)$, define $\vec{p}_0(\cdot)\in(\mathscr{P}_0)^m$, $q_0(\cdot)\in\mathscr{P}_0$ by
\begin{align*}
    \frac{1}{p_j(\cdot)}&=\frac{1-\theta}{p_{0,j}(\cdot)}+\frac{\theta}{p_{1,j}(\cdot)},\quad j=1,\ldots,m,\\
    \frac{1}{q(\cdot)}&=\frac{1-\theta}{q_0(\cdot)}+\frac{\theta}{q_1(\cdot)},
\end{align*}
as well as the vector of weights $\vec{w}_0$ by
\begin{align*}
    w_j&=w_{0,j}^{1-\theta}w_{1,j}^{\theta},\quad j=1,\ldots,m.
\end{align*}
Then, there exists $\eta>0$ such that for all $\theta\in(0,\eta)$ all of the following hold:
\begin{enumerate}
    \item $(\vec{p}_0(\cdot),q_0(\cdot),\vec{1},\infty)$ is a proper $m$-admissible quadruple with $\frac{1}{p_0(\cdot)}-\frac{1}{q_0(\cdot)} = \gamma$.\label{itm:no 1}

    \item $\vec{w}_0\in\mathcal{A}_{\vec{p}_0(\cdot),q_0(\cdot)}$.\label{itm:no 2}

    \item If $(q_1)_{-},q_{-}>1$, then also $(q_{0})_{-}>1$.\label{itm:no 3}

    \item  $t<\min\{(p_{0,j})_{-}:~j=1,\ldots,m\}$.

    \item $\vec{w}_0^{t}\in\mathcal{A}_{\frac{\vec{p}_0(\cdot)}{t},\frac{q_0(\cdot)}{t}}$.
\end{enumerate}
\end{lemma}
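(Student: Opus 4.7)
The plan is to deduce Lemma~\ref{lem:KeyLemma_with_t} from two parallel applications of Lemma~\ref{lem:KeyLemma}: one at the original scale, and one at the rescaled scale obtained by dividing every exponent by $t$ and raising every weight to the $t$-th power. The key observation is that the weight $\vec{w}_0$ is uniquely determined by $\vec{w}$, $\vec{w}_1$, and $\theta$ via $w_j=w_{0,j}^{1-\theta}w_{1,j}^\theta$, so the vector produced by the two applications is literally the same (up to taking $t$-th powers). This is what allows the two conclusions to be intersected and identified.

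First, I would apply Lemma~\ref{lem:KeyLemma} directly to $(\vec{p}(\cdot),q(\cdot),\vec{1},\infty)$, $(\vec{p}_1(\cdot),q_1(\cdot),\vec{1},\infty)$ and the weights $\vec{w},\vec{w}_1$. All its hypotheses are present by assumption, and it yields some $\eta'>0$ such that items (\ref{itm:no 1}), (\ref{itm:no 2}), (\ref{itm:no 3}) hold for $\theta\in(0,\eta')$. Second, I would apply Lemma~\ref{lem:KeyLemma} to the rescaled quadruples $(\vec{p}(\cdot)/t,q(\cdot)/t,\vec{1},\infty)$ and $(\vec{p}_1(\cdot)/t,q_1(\cdot)/t,\vec{1},\infty)$ together with the weights $\vec{w}^t$ and $\vec{w}_1^t$. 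Proper $m$-admissibility of these rescaled quadruples is precisely where the strict inequalities $t<(p_j)_-$ and $t<(p_{1,j})_-$ are used, since they are equivalent to $1<(p_j/t)_-$ and $1<(p_{1,j}/t)_-$; the $\mathrm{LH}$ property and the rescaled gap relation $\frac{1}{p(\cdot)/t}-\frac{1}{q(\cdot)/t}=t\gamma$ pass through trivially. The hypotheses $\vec{w}^t\in\mathcal{A}_{\vec{p}(\cdot)/t,q(\cdot)/t}$ and $\vec{w}_1^t\in\mathcal{A}_{\vec{p}_1(\cdot)/t,q_1(\cdot)/t}$ are assumed, and the exponent and weight factorization formulas transform cleanly: raising $w_j=w_{0,j}^{1-\theta}w_{1,j}^\theta$ to the $t$-th power yields $w_j^t=(w_{0,j}^t)^{1-\theta}(w_{1,j}^t)^\theta$, and dividing each reciprocal exponent equation by $t$ preserves the convex combination. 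This second application then produces some $\eta''>0$ such that, for $\theta\in(0,\eta'')$, the quadruple $(\vec{p}_0(\cdot)/t,q_0(\cdot)/t,\vec{1},\infty)$ is proper $m$-admissible (which is precisely item (4), since proper $m$-admissibility here entails $1<(p_{0,j}/t)_-$) and $\vec{w}_0^t\in\mathcal{A}_{\vec{p}_0(\cdot)/t,q_0(\cdot)/t}$, giving item (5).

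Setting $\eta:=\min\{\eta',\eta''\}$ makes all five conclusions hold simultaneously. There is no real conceptual obstacle here: the argument is essentially a bookkeeping exercise in matching hypotheses across the two scales. The one point that requires mild care is checking that no loss occurs when one simultaneously rescales exponents and weights, which is exactly the content of Lemma~\ref{lem:rescale}, and which guarantees that ``weight lies in the original class'' and ``$t$-th power of the weight lies in the rescaled class'' are compatible constraints that can be propagated through Lemma~\ref{lem:KeyLemma} in tandem.
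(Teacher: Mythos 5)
Your proposal is correct and follows essentially the same route as the paper: two parallel applications of Lemma~\ref{lem:KeyLemma} (one at the original scale, one at the $t$-rescaled scale with weights $\vec{w}^t,\vec{w}_1^t$), together with the observation that the interpolated exponents and weights produced by the two applications coincide after rescaling, and that properness of the rescaled quadruple yields item (4). The paper merely packages the rescaled objects in ``$\pi$-$\kappa$-$\delta$-$\omega$'' notation and verifies the same consistency identities you state.
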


\begin{proof}
By a direct application of Lemma~\ref{lem:KeyLemma} we have that items \eqref{itm:no 1}, \eqref{itm:no 2} and \eqref{itm:no 3} are all satisfied if we choose $\eta$ small enough.

Set now $\delta:=t\gamma\in[0,\infty)$ and define the ``$\pi\text{-}\kappa\text{-}\delta\text{-}\omega$'' counterparts
\begin{gather*}
    \vec{\omega}:=\vec{w}^t,\\
    \vec{\pi}(\cdot):=\frac{\vec{p}(\cdot)}{t},\\
    \kappa(\cdot):=\frac{q(\cdot)}{t},
\end{gather*}
and extend these pieces of notation analogously to their counterparts for the endpoint 1. Then, we clearly have that
\begin{equation*}
    (\vec{\pi}(\cdot),\kappa(\cdot),\vec{1},\infty)\quad\text{and}\quad(\vec{\pi}_1(\cdot),\kappa_1(\cdot),\vec{1},\infty)
\end{equation*}
are proper $m$-admissible quadruples with
\begin{equation*}
    \frac{1}{\pi_1(\cdot)}-\frac{1}{\kappa_1(\cdot)}=\frac{1}{\
    \pi(\cdot)}-\frac{1}{\kappa(\cdot)}=\delta.
\end{equation*}
Moreover, by assumption it holds that
\begin{gather*}
    \vec{\omega}_1\in\mathcal{A}_{\vec{\pi}_1(\cdot),\kappa_1(\cdot)},\\
    \vec{\omega}\in\mathcal{A}_{\vec{\pi}(\cdot),\kappa(\cdot)}.
\end{gather*}   
The ``$\pi\text{-}\kappa\text{-}\delta\text{-}\omega$'' counterparts for the endpoint 0 are defined directly in terms of the already defined ``$\pi\text{-}\kappa\text{-}\delta\text{-}\omega$'' counterparts by
\begin{align*}
    \frac{1}{\pi_j(\cdot)}&=\frac{1-\theta}{\pi_{0,j}(\cdot)}+\frac{\theta}{\pi_{1,j}(\cdot)},\quad j=1,\ldots,m,\\
    \frac{1}{\kappa(\cdot)}&=\frac{1-\theta}{\kappa_0(\cdot)}+\frac{\theta}{\kappa_1(\cdot)},
\end{align*}
and
\begin{align*}
    \omega_j&=\omega_{0,j}^{1-\theta}\omega_{1,j}^{\theta},\quad j=1,\ldots,m.
\end{align*}
Thus, by applying Lemma~\ref{lem:KeyLemma} for the ``$\pi\text{-}\kappa\text{-}\delta\text{-}\omega$'' counterparts, we can achieve that by picking $\eta$ small enough, the relation $\theta<\eta$ ensures that
\begin{itemize}
    \item $(\vec{\pi}_0(\cdot),\kappa_0(\cdot),\vec{1},\infty)$ is a proper $m$-admissible quadruple with $\frac{1}{\pi_0(\cdot)}-\frac{1}{\kappa_0(\cdot)}=\delta$, and

    \item $\vec{\omega}_0 \in \mathcal{A}_{\vec{\pi}_0(\cdot),\kappa_0(\cdot)}$.
\end{itemize}    
Therefore, it only remains to check that our definition of the ``$\pi\text{-}\kappa\text{-}\delta\text{-}\omega$'' counterparts for the endpoint 0 automatically yields the desired $t$-scaling, that is that we indeed have
\begin{equation}\label{eq:first_check}
    \vec{\pi}_0(\cdot)=\frac{\vec{p}_0(\cdot)}{t},\quad\kappa_0(\cdot)=\frac{q(\cdot)}{t},\quad\vec{\omega}_0=\vec{w}_0^{t}.
\end{equation}
Moreover, we need to check that we automatically have
\begin{equation}\label{eq:second_check}
    t<\min\{(p_{0,j})_{-}:~j=1,\ldots,m\}.
\end{equation}
To check~\eqref{eq:first_check}, observe that for all $j=1,\ldots,m$ we have
\begin{equation*}
    \frac{1}{\pi_{0,j}(\cdot)}=\frac{1}{1-\theta}\left(\frac{1}{\pi_j(\cdot)}-\frac{\theta}{\pi_{1,j}(\cdot)}\right)=\frac{1}{1-\theta}\left(\frac{t}{p_j(\cdot)}-\frac{t\theta}{p_{1,j}(\cdot)}\right)=
    \frac{t}{p_{0,j}(\cdot)}.
\end{equation*}
The other parts of~\eqref{eq:first_check} are similarly verified. Lastly, for checking~\eqref{eq:second_check}, we have
\begin{equation*}
    (\pi_{0,j})_{-}>1,\quad\forall j=1,\ldots,m
\end{equation*}
therefore
\begin{equation*}
    \left(\frac{p_{0,j}}{t}\right)_{-}>1,\quad\forall j=1,\ldots,m,
\end{equation*}
in other words
\begin{equation*}
    (p_{0,j})_{-}>t,\quad\forall j=1,\ldots,m,
\end{equation*}
concluding the proof.
\end{proof}

\section{Lions--Peetre compactness theorem in the bilinear setting}\label{sec: Lions--Peetre comp. thm.}

In this section we review the basic abstract tool about interpolation for bilinear compact operators that will be used in the proof of Theorem~\ref{thm:main_result}. For the reader's convenience, we include some background. None of the material here is new, but we carefully track the assumptions of the various definitions.

\subsection{Banach function spaces}

We begin with briefly recalling the notion of Banach function spaces. We follow the presentation and definitions from the recent paper \cite{LoNie2024} of Lorist--Nieraeth, where we refer to for more details, history and proofs.

\begin{definition}
Let $(\Omega,\mu)$ be a $\sigma$-finite measure space. We denote by $L^{0}(\Omega)$ the space of all measurable functions on $\Omega$. Let $X$ be a Banach space whose underlying set is a subset of $L^{0}(\Omega)$. Following \cite{LoNie2024}, the space $X$ is said to be a \emph{Banach function space over $(\Omega,\mu)$} if it satisfies both of the following two properties:
\begin{itemize}
    \item \textbf{Ideal property:} If $f\in X$ and $g\in L^0(\Omega)$ with $|g|\leq |f|$ $\mu$-a.e., then $g\in X$ and $\Vert g\Vert_{X}\leq\Vert f\Vert_{X}$.

    \item \textbf{Saturation property:} For every measurable set $E\subseteq\Omega$ with $\mu(E)>0$, there exists a measurable set $F\subseteq E$ with $\mu(F)>0$ and $\chi_{F}\in X$.
\end{itemize}
\end{definition}

\begin{remark}
In several places of the literature, the definition of Banach function spaces includes a much stronger property than the saturation property, namely, that the characteristic function of any measurable set of finite measure belongs to the space. However, as explained in \cite{LoNie2024}, this property is too restrictive, as it excludes in general for example weighted Lebesgue spaces, even with constant exponent.
\end{remark}

Several important Banach functions spaces enjoy in addition the Fatou property. While sometimes in the literature it furnished part of the definition of Banach functions spaces, for us, following \cite{LoNie2024}, it will be an additional, desirable property. We follow the formulation in \cite{LoNie2024}.

\begin{definition}
Let $X$ be a Banach function space over a $\sigma$-finite measure space $(\Omega,\mu)$. We say that $X$ has the \emph{Fatou property}, if for any pointwise increasing sequence $(f_k)_{k=1}^{\infty}$ of nonnegative functions in $X$ with $f_k\to f$ pointwise $\mu$-a.e.~as $k\to\infty$ for some $f\in L^{0}(\Omega)$ and $\sup\limits_{k=1,2,\ldots}\Vert f_k\Vert_{X}<\infty$, it holds $f\in X$ and $\Vert f\Vert_{X}=\lim\limits_{k\to\infty}\Vert f_k\Vert_{X}$.
\end{definition}

Another property that will be important for us is that of the absolutely continuous norm. We follow again the formulation in \cite{LoNie2024}.

\begin{definition}
Let $X$ be a Banach function space over a $\sigma$-finite measure space $(\Omega,\mu)$. We say that $X$ has a \emph{absolutely continuous norm}, if for all $f\in X$ and for all decreasing sequences $(E_k)_{k=1}^{\infty}$ of measurable sets with $\mu\left(\displaystyle\bigcap_{k=1}^{\infty}E_k\right)=0$, there holds $\lim\limits_{k\to\infty}\Vert f\chi_{E_k}\Vert_{X}=0$.
\end{definition}

Let us now remark that the weighted variable Lebesgue spaces we consider in this paper are Banach functions spaces satisfying the additional properties.

\begin{lemma}\label{lm:Weighted_Variable_Lebesgue_BFS}
Let $p(\cdot)\in\mathscr{P}$ and let $w$ be a weight on $\R^n$. Then, $L^{p(\cdot)}(\R^n)$ is a Banach function space over $\R^n$ (equipped with Lebesgue measure) that is reflexive as a Banach space with Banach dual $L^{p'(\cdot)}(w^{-1})$, satisfies the Fatou property and has absolutely continuous norm.
\end{lemma}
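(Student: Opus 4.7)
The plan is to reduce everything to the unweighted case via the isometric bijection $M_w:L^{p(\cdot)}(w)\to L^{p(\cdot)}(\R^n)$ defined by $M_wf:=fw$, whose inverse is $M_{1/w}$ because $w$ is positive and finite a.e. By the very definition of $\|\,\cdot\,\|_{L^{p(\cdot)}(w)}=\|\,\cdot\,w\|_{p(\cdot)}$, the map $M_w$ is an isometry of Banach spaces, so any property that is preserved by isometric isomorphism will transfer immediately from the well-studied unweighted space (see \cite{CF2013, DHHR2011}) to the weighted one.

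First I would dispose of the Banach function space axioms directly on the weighted side. The ideal property is immediate: if $g\in L^{0}$ with $|g|\le|f|$ a.e., then $|gw|\le|fw|$ a.e., and monotonicity of the modular $\rho_{p(\cdot)}$ gives $\|g\|_{L^{p(\cdot)}(w)}\le\|f\|_{L^{p(\cdot)}(w)}$. For the saturation property, given a measurable $E\subseteq\R^n$ with $|E|>0$, I would write $E=\bigcup_{k,\ell\in\N}(E\cap B_k\cap\{1/\ell\le w\le\ell\})$ and choose $F\subseteq E$ with $|F|\in(0,\infty)$ on which $w$ is bounded above and below by positive constants; then $\chi_F w\in L^{\infty}$ is compactly supported, so $\chi_F\in L^{p(\cdot)}(w)$.

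For reflexivity, the Fatou property and the absolutely continuous norm I would appeal to the standard theory for unweighted $L^{p(\cdot)}(\R^n)$ with $p(\cdot)\in\mathscr{P}$: since $1<p_-\le p_+<\infty$, reflexivity follows from uniform convexity of the modular, the Fatou property from monotone convergence applied to $\rho_{p(\cdot)}$, and absolute continuity of the norm from the finiteness of $p_+$ combined with dominated convergence in the modular (all in \cite{CF2013, DHHR2011}). Each of these three properties is preserved by $M_w$: reflexivity is purely isomorphic, the Fatou property survives because $M_w$ takes pointwise monotone sequences to pointwise monotone sequences with the same limit behaviour, and the absolute continuity survives because $M_w$ commutes with the restriction operator $f\mapsto f\chi_{E_k}$.

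Finally, for duality I would use the pairing $\langle f,g\rangle:=\int_{\R^n}fg\,\ud x$ together with the factorisation $fg=(fw)(gw^{-1})$. The known identification $(L^{p(\cdot)}(\R^n))^{*}=L^{p'(\cdot)}(\R^n)$ then transfers through $M_w$ and $M_{w^{-1}}$ to show that the map sending $g\in L^{p'(\cdot)}(w^{-1})$ to the functional $f\mapsto\langle f,g\rangle$ is an isometric isomorphism onto $L^{p(\cdot)}(w)^{*}$. No step here is genuinely difficult; the only point needing minor care is the saturation argument, where one must avoid tacitly assuming local boundedness of $w$, and the bookkeeping of the isometry $M_w$ when transferring the Fatou and absolute continuity properties from the unweighted setting.
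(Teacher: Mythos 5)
Your proof is correct and follows essentially the same route as the paper: both reduce the weighted statement to the known unweighted facts via the multiplication map $f\mapsto fw$, which is precisely the content of the results of Lorist--Nieraeth (\cite{LoNie2024}, Propositions 3.12, 3.15, 3.17 and Corollary 3.16) that the paper cites, with you simply carrying out the transfer of the ideal, saturation, Fatou, absolute-continuity, reflexivity and duality properties by hand. The only small inaccuracy is calling the duality map $g\mapsto\bigl(f\mapsto\int_{\R^n}fg\,\ud x\bigr)$ an \emph{isometric} isomorphism onto $L^{p(\cdot)}(w)^{*}$ --- with the Luxemburg norm it is in general only an isomorphism with equivalent norms --- but since the lemma asserts nothing more than the identification of the Banach dual, this does not affect the claim.
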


\begin{proof}
First of all, it is well-known that $L^{p(\cdot)}(\R^n)$ is a Banach function space over $\R^n$ that satisfies the Fatou property, see for instance \cite[Lemma 3.2.8 (b)]{DHHR2011},  and has absolutely continuous norm, see for example \cite[p.~73]{CF2013}. Then, by \cite[Proposition 3.17]{LoNie2024} we immediately obtain that $L^{p(\cdot)}(w)$ is a Banach function space over $\Omega$ that has the Fatou property. Furthermore, by combining \cite[Proposition 3.12]{LoNie2024} with \cite[Proposition 3.17]{LoNie2024} we deduce that $L^{p(\cdot)}(w)$ has absolutely continuous norm. Lastly, it is well-known that the \emph{K\"{o}the dual} of $L^{p(\cdot)}(\R^n)$ (see \cite{LoNie2024} for the precise, general definition of the K\"{o}the dual) is
\begin{equation*}
    (L^{p(\cdot)}(\R^n))'=L^{p'(\cdot)}(\R^n),
\end{equation*}
see for instance \cite[p.~73]{CF2013}. Thus, by \cite[Proposition 3.17]{LoNie2024} we obtain that the K\"{o}the dual of $L^{p(\cdot)}(w)$ is
\begin{equation*}
    (L^{p(\cdot)}(w))'=L^{p'(\cdot)}(w^{-1}).
\end{equation*}
In particular, $(L^{p(\cdot)}(w))'$ has also absolutely continuous norm. Combining \cite[Proposition 3.12]{LoNie2024} with \cite[Corollary 3.16]{LoNie2024} we immediately deduce that $L^{p(\cdot)}(w)$ is reflexive. From \cite[Proposition 3.15 (a)]{LoNie2024} we finally obtain that the Banach dual of $L^{p(\cdot)}(w)$ is $L^{p'(\cdot)}(w^{-1})$.
\end{proof}

As proved in \cite[Proposition 2.5]{LoNie2024}, the saturation property has several equivalent reformulations. One of them is so important for us that we state it explicitly:

\begin{lemma}\label{lm:Saturation_Exhaustion}
Let $X$ be a Banach function space over a $\sigma$-finite measure space $(\Omega,\mu)$. Then, there exists an increasing sequence of measurable sets $(A_k)^{\infty}_{k=1}$ such that $\mu(A_k)<\infty$ and $\chi_{A_k}\in X$ for all $k=1,2,\ldots$, and $\Omega=\bigcup_{k=1}^{\infty}A_k$.
\end{lemma}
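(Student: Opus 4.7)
The plan is to combine $\sigma$-finiteness of $(\Omega,\mu)$ with the saturation property of $X$ via a classical exhaustion argument, and then to diagonalize. First, by $\sigma$-finiteness, write $\Omega=\bigcup_{n=1}^{\infty}\Omega_n$ as an increasing union of measurable sets with $\mu(\Omega_n)<\infty$. It then suffices to construct, for each fixed $n$, an increasing sequence $(F_k^{(n)})_{k=1}^{\infty}$ of measurable subsets of $\Omega_n$ with $\chi_{F_k^{(n)}}\in X$ and with $N_n:=\Omega_n\setminus\bigcup_k F_k^{(n)}$ being $\mu$-null. The diagonal choice $A_k:=\bigcup_{n=1}^{k}F_k^{(n)}$ is then increasing in $k$, each $A_k$ has finite measure and $\chi_{A_k}\in X$ (by closure under finite unions, see below), and $\bigcup_k A_k$ covers $\Omega$ modulo a null set, which can be absorbed into $A_1$ without altering any of the required properties thanks to the ideal property.

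To build $(F_k^{(n)})_k$ inside $\Omega_n$, consider
\begin{equation*}
\mathcal{F}_n:=\{E\subseteq\Omega_n\text{ measurable}:\chi_E\in X\}.
\end{equation*}
The family $\mathcal{F}_n$ is closed under finite unions: if $E_1,E_2\in\mathcal{F}_n$, then $\chi_{E_1}+\chi_{E_2}\in X$ since $X$ is a vector space, and $\chi_{E_1\cup E_2}\leq\chi_{E_1}+\chi_{E_2}$ forces $\chi_{E_1\cup E_2}\in X$ by the ideal property. Set $\alpha_n:=\sup\{\mu(E):E\in\mathcal{F}_n\}\leq\mu(\Omega_n)<\infty$, pick $E_j\in\mathcal{F}_n$ with $\mu(E_j)\to\alpha_n$, and let $F_k^{(n)}:=E_1\cup\cdots\cup E_k$. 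Then $F_k^{(n)}\in\mathcal{F}_n$, the sequence $(F_k^{(n)})_k$ is increasing, and $\mu(F_k^{(n)})\to\alpha_n$.

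The key step is to verify that $\mu(N_n)=0$. Suppose, toward contradiction, that $\mu(N_n)>0$. The saturation property applied to $N_n$ yields a measurable $G\subseteq N_n$ with $\mu(G)>0$ and $\chi_G\in X$. Choose $k$ so large that $\mu(F_k^{(n)})>\alpha_n-\mu(G)$. Since $G\cap F_k^{(n)}=\emptyset$, we obtain $F_k^{(n)}\cup G\in\mathcal{F}_n$ with $\mu(F_k^{(n)}\cup G)=\mu(F_k^{(n)})+\mu(G)>\alpha_n$, contradicting the definition of $\alpha_n$. The main (and essentially only) obstacle is this contrapositive use of saturation to exclude any residual positive-measure set on which $\chi$ fails to lie in $X$; once it is dispatched, the rest of the argument is routine bookkeeping with finite unions and null sets.
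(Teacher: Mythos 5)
Your proof is correct, but it takes a genuinely different route from the paper. The paper's own proof is a two-step reduction: it cites \cite[Proposition 2.5 (ii)]{LoNie2024} for the existence of an increasing sequence $(F_k)_{k=1}^{\infty}$ with $\chi_{F_k}\in X$ and $\bigcup_k F_k=\Omega$, and then simply intersects $F_k$ with an increasing $\sigma$-finite exhaustion $(E_k)$, using the ideal property to keep $\chi_{F_k\cap E_k}\in X$. You instead reprove the cited fact from scratch: within each finite-measure piece $\Omega_n$ you run the classical exhaustion argument (maximize $\mu(E)$ over $E$ with $\chi_E\in X$, use saturation on the residual set to force its measure to zero), and you interleave the finite-measure requirement from the start by working inside $\Omega_n$. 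What your approach buys is self-containedness --- it makes the lemma independent of the Lorist--Nieraeth reference and exhibits exactly how saturation is used; what the paper's approach buys is brevity. One cosmetic slip: absorbing the null set $N:=\Omega\setminus\bigcup_k A_k$ into $A_1$ alone would destroy monotonicity of the sequence (since $A_1\cup N\not\subseteq A_2$ in general); you should replace every $A_k$ by $A_k\cup N$, which still satisfies $\chi_{A_k\cup N}\in X$ by the ideal property (the two indicators agree $\mu$-a.e.) and $\mu(A_k\cup N)=\mu(A_k)<\infty$. With that trivial adjustment the argument is complete.
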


\begin{proof}
By \cite[Proposition 2.5 (ii)]{LoNie2024} we have that there exists an increasing sequence of measurable sets $(F_k)^{\infty}_{k=1}$ such that $\chi_{F_k}\in X$ for all $k=1,2,\ldots$, and $\Omega=\bigcup_{k=1}^{\infty}F_k$. Moreover, since $\Omega$ is $\sigma$-finite, we can write $\Omega=\bigcup_{k=1}^{\infty}E_k$, where $(E_k)^{\infty}_{k=1}$ is an increasing sequence of measurable sets with $\mu(E_k)<\infty$, for all $k=1,2,\ldots$. Set $A_k:=F_k\cap E_k$, $k=1,2,\ldots$. Then, $(A_k)^{\infty}_{k=1}$ is an increasing sequence of measurable sets with $\mu(A_k)<\infty$, for all $k=1,2,\ldots$ and $\Omega=\bigcup_{k=1}^{\infty}A_k$. Moreover, since $0\leq\chi_{A_k}\leq\chi_{F_k}$, the ideal property yields $\chi_{A_k}\in X$, for all $k=1,2,\ldots$.
\end{proof}

We now turn to an important embedding property of weighted variable Lebesgue spaces, whose underlying weight satisfies an additional assumption present in our context.

\begin{definition}
Let $(\Omega,\mu)$ be a $\sigma$-finite measure space. Following \cite{Kouts2021}, we say that a sequence $(f_k)^{\infty}_{k=1}$ of measurable functions on $\Omega$ \emph{converges locally in measure} to a measurable function $f$ on $\Omega$, if for any measurable set $E\subseteq\Omega$ with $\mu(E)<\infty$ we have $f_k|_{E}\to f|_{E}$ as $k\to\infty$ in measure.
\end{definition}

The local convergence in measure was characterized through a metric on the space of measurable functions in \cite[Proposition B.6]{Kouts2021}.

\begin{lemma}\label{lm:conv_local_meas}
Let $(\Omega,\mu)$ be a $\sigma$-finite measure space. Let $(A_k)^{\infty}_{k=1}$ be any sequence of pairwise disjoint measurable sets of finite positive measure covering $\Omega$. Consider the map $d:L^{0}(\Omega)\times L^0(\Omega)\to[0,\infty)$ given by
\begin{equation*}
    d(f,g):=\sum_{k=1}^{\infty}\frac{1}{2^k|A_k|}\int_{A_k}\frac{|f-g|}{1+|f-g|}\dd x.
\end{equation*}
By \cite[Proposition B.6]{Kouts2021} we have that $d$ is a well-defined metric on $L^{0}(\Omega)$ making it into a topological vector space. Let $(f_k)^{\infty}_{k=1}$ be a sequence of measurable functions on $\Omega$ and let $f\in L^{0}(\Omega)$. Then, the following are equivalent:
\begin{enumerate}
        \item[(a)] $f_k\overset{d}{\to}f$ as $k\to\infty$.

        \item[(b)] $f_k\to f$ locally in measure as $k\to\infty$.

        \item[(c)] For any $k=1,2,\ldots$ we have $f_m|_{A_k}\to f|_{A_k}$ as $m\to\infty$ in measure. 
\end{enumerate}
\end{lemma}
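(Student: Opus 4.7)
The plan is to prove the two equivalences $(a)\Leftrightarrow(c)$ and $(b)\Leftrightarrow(c)$ separately, since condition $(c)$ provides a convenient intermediate localization of convergence in measure to the fixed blocks $A_k$.

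First I would establish $(a)\Leftrightarrow(c)$. The key input is the classical fact that on a \emph{finite} measure space $(A,\mu|_A)$, a sequence $g_m$ of measurable functions converges in measure to $g$ if and only if $\int_A\frac{|g_m-g|}{1+|g_m-g|}\dd x\to 0$ as $m\to\infty$; the ``only if'' direction follows from the bounded convergence theorem applied separately to $\{|g_m-g|\le\varepsilon\}$ and its complement, while the ``if'' direction is Chebyshev's inequality for the strictly increasing function $t\mapsto t/(1+t)$. Applying this on each $A_k$, condition $(c)$ becomes equivalent to the assertion that, for every $k$,
\[
a_{k,m}:=\frac{1}{2^k|A_k|}\int_{A_k}\frac{|f_m-f|}{1+|f_m-f|}\dd x\to 0\quad\text{as } m\to\infty.
\]
Since $d(f_m,f)=\sum_{k=1}^\infty a_{k,m}$ and $0\le a_{k,m}\le 2^{-k}$ with $\sum 2^{-k}$ summable independently of $m$, the dominated convergence theorem for series yields $d(f_m,f)\to 0$ if and only if every $a_{k,m}\to 0$, closing the equivalence $(a)\Leftrightarrow(c)$.

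Next I would prove $(b)\Leftrightarrow(c)$. The direction $(b)\Rightarrow(c)$ is immediate upon choosing $E=A_k$, since $\mu(A_k)<\infty$ by assumption. For $(c)\Rightarrow(b)$, fix a measurable set $E\subseteq\Omega$ with $\mu(E)<\infty$ and constants $\varepsilon,\delta>0$. The pairwise disjoint cover $\Omega=\bigsqcup_k A_k$ gives $\mu(E)=\sum_{k=1}^\infty\mu(E\cap A_k)<\infty$, so there exists $N$ with $\sum_{k>N}\mu(E\cap A_k)<\delta/2$. By $(c)$ applied on each $A_k$ for $k=1,\dots,N$, one can choose $m$ sufficiently large so that $\mu(\{x\in A_k:|f_m(x)-f(x)|>\varepsilon\})<\delta/(2N)$ for every $k\le N$. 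Summing these bounds together with the tail estimate yields $\mu(\{x\in E:|f_m-f|>\varepsilon\})<\delta$ for all sufficiently large $m$, which is precisely convergence in measure on $E$.

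There is no serious obstacle in this proof: all three conditions are essentially the same statement viewed at three different resolutions (metric on $L^0(\Omega)$, local on arbitrary $E$ of finite measure, local on the fixed blocks $A_k$), and the decomposition $E=\bigsqcup_k(E\cap A_k)$ combined with $\sigma$-additivity provides the bridge between them. The only genuine analytic input is the classical Fr\'echet metrization of convergence in measure on a finite measure space, which I would invoke rather than reprove.
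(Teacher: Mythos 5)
Your proof is correct, but it is structured quite differently from the paper's. The paper's argument is essentially citation-based: it establishes the cycle (a)$\Rightarrow$(b)$\Rightarrow$(c)$\Rightarrow$(a), taking (a)$\Rightarrow$(b) directly from \cite[Proposition B.6 (iii)]{Kouts2021}, noting (b)$\Rightarrow$(c) is trivial, and obtaining (c)$\Rightarrow$(a) by observing that the cited proof of the converse direction in that proposition only ever uses convergence in measure on the fixed blocks $A_k$. You instead give a self-contained argument for (a)$\Leftrightarrow$(c) and (b)$\Leftrightarrow$(c): the first via the classical Fr\'echet-type characterization of convergence in measure on a finite measure space through $\int_{A}\tfrac{|g_m-g|}{1+|g_m-g|}$ together with dominated convergence for the series defining $d$ (using the uniform bound $a_{k,m}\leq 2^{-k}$), and the second via the decomposition $E=\bigsqcup_k(E\cap A_k)$ with a tail estimate. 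Both of your equivalences check out; the only cosmetic quibble is that in the ``only if'' half of the Fr\'echet characterization the direct splitting estimate (integrand $\leq\varepsilon$ on $\{|g_m-g|\leq\varepsilon\}$, $\leq 1$ elsewhere) is all that is needed, so the appeal to the bounded convergence theorem is superfluous. What your approach buys is a proof that can be verified without opening the reference; what the paper's buys is brevity, at the cost of asking the reader to inspect the cited proof to confirm that hypothesis (c) suffices there.
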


\begin{proof}
That (a) implies (b) is just direction $\Rightarrow$ in \cite[Proposition B.6 (iii)]{Kouts2021}. It is clear that (b) implies (c). Finally, the proof of direction $\Leftarrow$ in \cite[Proposition B.6 (iii)]{Kouts2021} shows that (c) implies (a), since in that proof one only uses that $f_m|_{A_k}\to f|_{A_k}$ as $m\to\infty$ in measure, for all $k=1,2,\ldots$.
\end{proof}

The following estimate will turn out to be crucial for the embedding of weighted variable Lebesgue spaces into the space of measurable functions equipped with the topology of local convergence in measure.

\begin{lemma}\label{lm:Lp_in_local}
Let $p(\cdot)\in\mathscr{P}$ and let $w$ be a weight on $\R^n$. Then, there exists a sequence $(A_k)^{\infty}_{k=1}$ of pairwise disjoint measurable sets of finite measure covering $\R^n$, such for all $k=1,2,\ldots$, there exists a constant $C_{k}\in(0,\infty)$ with
\begin{equation*}
    \int_{A_k}|f(x)|\dd x\leq C_{k}\Vert f\Vert_{L^{p(\cdot)}(w)},\quad\forall f\in L^{p(\cdot)}(w).
\end{equation*}
\end{lemma}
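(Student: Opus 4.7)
The goal is to produce a sequence of sets that is simultaneously adapted to the $\sigma$-finiteness of Lebesgue measure on $\R^n$ and to the \emph{dual} weighted variable Lebesgue space. The key observation is that if $\chi_{A_k}\in L^{p'(\cdot)}(w^{-1})$, then H\"{o}lder's inequality for variable exponents (Lemma~\ref{Holder's ineq.}) immediately yields the claimed pointwise control of the $L^1$--norm on $A_k$ by the $L^{p(\cdot)}(w)$--norm.

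\textbf{Step 1: Apply saturation to the dual space.} By Lemma~\ref{lm:Weighted_Variable_Lebesgue_BFS}, both $L^{p(\cdot)}(w)$ and its Banach dual $L^{p'(\cdot)}(w^{-1})$ are Banach function spaces over $\R^n$ equipped with Lebesgue measure. I would apply Lemma~\ref{lm:Saturation_Exhaustion} to the dual space $L^{p'(\cdot)}(w^{-1})$ to obtain an increasing sequence $(F_k)_{k=1}^{\infty}$ of measurable subsets of $\R^n$ with $|F_k|<\infty$ and $\chi_{F_k}\in L^{p'(\cdot)}(w^{-1})$ for every $k$, and $\bigcup_{k=1}^{\infty}F_k=\R^n$.

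\textbf{Step 2: Disjointify.} Set $A_1:=F_1$ and $A_k:=F_k\setminus F_{k-1}$ for $k\geq2$. Then the sets $(A_k)_{k=1}^{\infty}$ are pairwise disjoint, have finite Lebesgue measure, and cover $\R^n$. Moreover, since $0\leq\chi_{A_k}\leq\chi_{F_k}$, the ideal property of the Banach function space $L^{p'(\cdot)}(w^{-1})$ gives $\chi_{A_k}\in L^{p'(\cdot)}(w^{-1})$, i.e.
\begin{equation*}
    \|w^{-1}\chi_{A_k}\|_{L^{p'(\cdot)}}=\|\chi_{A_k}\|_{L^{p'(\cdot)}(w^{-1})}<\infty.
\end{equation*}

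\textbf{Step 3: Apply H\"older.} For any $f\in L^{p(\cdot)}(w)$, write $|f|\chi_{A_k}=(|f|w)\cdot(w^{-1}\chi_{A_k})$ and apply H\"older's inequality for variable Lebesgue spaces (Lemma~\ref{Holder's ineq.}) with the exponents $p(\cdot)$ and $p'(\cdot)$:
\begin{equation*}
    \int_{A_k}|f(x)|\dd x
    =\int_{\R^n}|f(x)|w(x)\cdot w(x)^{-1}\chi_{A_k}(x)\dd x
    \lesssim\|fw\|_{L^{p(\cdot)}}\|w^{-1}\chi_{A_k}\|_{L^{p'(\cdot)}}
    =C_k\|f\|_{L^{p(\cdot)}(w)},
\end{equation*}
where $C_k$ is (up to an absolute constant from H\"older's inequality) the finite quantity from Step~2 and is independent of $f$.

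\textbf{Expected obstacle.} There is essentially no serious obstacle: the argument is a clean two-step combination of the saturation property with H\"older's inequality. The only mildly delicate point is to resist the temptation to apply Lemma~\ref{lm:Saturation_Exhaustion} directly to $L^{p(\cdot)}(w)$ (which would give $\chi_{A_k}\in L^{p(\cdot)}(w)$, the wrong side of the duality for H\"older's inequality); the correct move is to extract the exhausting sequence from the K\"othe/Banach dual $L^{p'(\cdot)}(w^{-1})$, whose status as a Banach function space is guaranteed by Lemma~\ref{lm:Weighted_Variable_Lebesgue_BFS}.
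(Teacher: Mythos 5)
Your proposal is correct and follows essentially the same route as the paper: apply the saturation/exhaustion lemma to the dual space $L^{p'(\cdot)}(w^{-1})$ (which is itself a Banach function space since $p'(\cdot)\in\mathscr{P}$ and $w^{-1}$ is a weight), disjointify the increasing exhausting sequence, use the ideal property to keep $\chi_{A_k}$ in the dual space, and conclude with variable-exponent H\"older. The only cosmetic difference is that the paper writes the disjointification as $A_k:=B_k\setminus\bigl(\bigcup_{m=1}^{k-1}B_m\bigr)$, which for an increasing sequence coincides with your $F_k\setminus F_{k-1}$.
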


\begin{proof}
By Lemma~\ref{lm:Saturation_Exhaustion}, applied for $L^{p'(\cdot)}(w^{-1})$, there exists an increasing sequence of measurable sets $(B_k)_{k=1}^{\infty}$ such that $\mu(B_k)<\infty$ and $\chi_{B_k}\in L^{p'(\cdot)}(w^{-1})$ covering $\R^n$. Set $A_1:=B_1$ and
\begin{equation*}
    A_k:=B_k\setminus\left(\bigcup_{m=1}^{k-1}B_m\right),\quad k=2,3,\ldots.
\end{equation*}
Then, $A_1,A_2,\ldots$ are measurable, pairwise disjoint, have finite measure and cover $\R^n$. Furthermore, $\chi_{A_k}\in L^{p'(\cdot)}(w^{-1})$ for all $k=1,2,\ldots$ by the ideal property.
    
Fix now $k\geq 1$.  Let $f\in L^{p(\cdot)}(w)$ be arbitrary. By H\"{o}lder's inequality in the variable exponent setting, Lemma~\ref{Holder's ineq.}, for $p(\cdot)$ and $p'(\cdot)$ we have
\begin{align*}
    \int_{A_{k}}|f(x)|\dd x&=\int_{\R^n}|f(x)w(x)|\chi_{A_{k}}(x)w^{-1}(x)\dd x\leq C\Vert fw\Vert_{L^{p(\cdot)}(\R^n)}\Vert\chi_{A_{k}}w^{-1}\Vert_{L^{p'(\cdot)}(\R^n)}\\
    &=C_{k}\Vert f\Vert_{L^{p(\cdot)}(w)},
\end{align*}
where $C_{k}:=C\Vert\chi_{A_k}\Vert_{L^{p'(\cdot)}(w^{-1})}$ is finite.
\end{proof}

Now we can state the embedding.

\begin{proposition}\label{prop:embedding_variable_Lebesgue}
Let $p(\cdot)\in\mathscr{P}$ and let $w$ be a weight on $\R^n$. Then, the inclusion $L^{p(\cdot)}(w)\subseteq L^{0}(\R^n)$ is a continuous embedding.
\end{proposition}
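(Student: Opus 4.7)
The plan is to reduce the claim to a short metric computation. The source is a Banach space and, by Lemma~\ref{lm:conv_local_meas}, the target $L^0(\mathbb{R}^n)$ is a metrizable topological vector space; since the inclusion is linear, continuity is equivalent to sequential continuity at the origin. So it suffices to prove that whenever $\|f_j\|_{L^{p(\cdot)}(w)}\to 0$, one has $d(f_j,0)\to 0$ for a suitable metric $d$ generating local convergence in measure.

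To set things up, I would invoke Lemma~\ref{lm:Lp_in_local} to obtain a sequence $(A_k)_{k=1}^{\infty}$ of pairwise disjoint measurable sets of finite measure covering $\mathbb{R}^n$ together with constants $C_k\in(0,\infty)$ with
\[
\int_{A_k}|f|\,\mathrm{d}x \le C_k\|f\|_{L^{p(\cdot)}(w)},\qquad \forall\,f\in L^{p(\cdot)}(w).
\]
After discarding any $A_k$ of measure zero (which leaves the remainder a cover up to a null set and has no effect on the topology of local convergence in measure), this sequence is admissible input to Lemma~\ref{lm:conv_local_meas}, producing the metric
\[
d(f,g)=\sum_{k=1}^{\infty}\frac{1}{2^k|A_k|}\int_{A_k}\frac{|f-g|}{1+|f-g|}\,\mathrm{d}x
\]
that generates the local-in-measure topology on $L^0(\mathbb{R}^n)$.

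The remainder would then be a two-line estimate. Using the pointwise bound $t/(1+t)\le\min(1,t)$ for $t\ge 0$, the $k$-th term of $d(f_j,0)$ is dominated by
\[
\frac{1}{2^k|A_k|}\int_{A_k}\frac{|f_j|}{1+|f_j|}\,\mathrm{d}x \;\le\; \min\Bigl(\tfrac{1}{2^k},\;\tfrac{C_k}{2^k|A_k|}\|f_j\|_{L^{p(\cdot)}(w)}\Bigr).
\]
For each fixed $k$ the right-hand side tends to $0$ as $j\to\infty$, while the whole sum is dominated by the summable $(2^{-k})_{k\ge 1}$. Dominated convergence for series then yields $d(f_j,0)\to 0$, as required.

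I do not anticipate a substantive obstacle: the proposition is essentially a packaging of Lemmata~\ref{lm:Lp_in_local} and~\ref{lm:conv_local_meas} via the elementary pointwise inequality above. The only minor care needed is the removal of the possibly null $A_k$'s in order to match the hypotheses of Lemma~\ref{lm:conv_local_meas} verbatim (where the $A_k$ are required to have positive measure so that the $1/|A_k|$ factors make sense).
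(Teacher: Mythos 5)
Your argument is correct and follows exactly the route the paper takes: the paper's proof simply delegates to the corresponding proposition in the cited thesis, invoking Lemma~\ref{lm:Lp_in_local} together with Lemma~\ref{lm:conv_local_meas}, which is precisely the combination you carry out explicitly (including the elementary bound $t/(1+t)\le\min(1,t)$ and dominated convergence for the series). Your remark about discarding any null $A_k$ so that the $1/|A_k|$ factors in the metric are well defined is a legitimate and correctly handled point of care.
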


\begin{proof}
This is proved identically to \cite[Proposition 3.3.1.3]{Kouts2021}, by using Lemma~\ref{lm:conv_local_meas} (c) in tandem with Lemma~\ref{lm:Lp_in_local}, instead of assumption \cite[Definition 3.3.1.1 (iv)]{Kouts2021}.
\end{proof}

\subsection{The Cobos--Fern\'{a}ndez-Cabrera--Mart\'{i}nez theorem}

\begin{definition}
Let $\bar{A}=(A_0,A_1)$ be a pair of Banach spaces. We say that $\bar{A}$ is a \emph{Banach couple}, if there is some Hausdorff topological vector space $Y$ such that both $A_0$ and $A_1$ are continuously embedded in $Y$.
\end{definition}

We now recall the result of Cobos--Fern\'{a}ndez-Cabrera--Mart\'{i}nez \cite{CFCM2020} about complex interpolation for bilinear compact operators. To state it, we need two additional pieces of notations: if $\bar{A}=(A_0,A_1)$ is a Banach couple, the we denote by $A_j^{\circ}$ the norm topological closure of $A_0\cap A_1$ in $A_j$, for all $j=0,1$. Moreover, we denote by ${\bf\mathcal{B}}(\bar{A}\times\bar{B},\bar{E})$ the family of bilinear operators $T$ defined on $(A_0\cap A_1)\times(B_0\cap B_1)$ with values in $E_0\cap E_1$ that satisfy the following
\begin{equation*}
  \|T(a,b)\|_{E_j}\leq M_j\|a\|_{A_j}\|b\|_{B_j},\quad a\in A_0\cap A_1,\quad b\in B_0\cap B_1,\quad j=0,1,
\end{equation*}
where $M_j$ are positive constants.

\begin{theorem}[\cite{CFCM2020}, Theorem 3.2]\label{thm:CFCM}
Let $\bar{A}=(A_0,A_1)$, $\bar{B}=(B_0,B_1)$ be Banach couples. Let $\bar{E}=(E_0,E_1)$ be a couple of Banach function spaces on a $\sigma$-finite measure space $(\Omega,\mu)$ having the Fatou property. Let $T\in{\bf\mathcal{B}}(\bar{A}\times\bar{B},\bar{E})$. If $T:A_1^{\circ}\times B_1^{\circ}\to E_1$ compactly and $E_1$ has absolutely continuous norm, then for any $0<\theta<1$, $T$ may be uniquely extended to a compact bilinear operator from $[A_0,A_1]_\theta\times[B_0,B_1]_\theta$ to $[E_0,E_1]_\theta$.
\end{theorem}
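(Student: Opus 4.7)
The plan is to prove this bilinear compactness–interpolation theorem by extending Cwikel's classical complex–interpolation argument for compact linear operators to the bilinear setting, working throughout with Calder\'{o}n representing functions on the strip $S = \{z\in\C:\, 0<\Re z<1\}$. Given a bounded sequence $(a_n,b_n)_n$ in $[A_0,A_1]_\theta \times [B_0,B_1]_\theta$, I would first choose Calder\'{o}n representatives $F_n:\overline{S}\to A_0+A_1$ and $G_n:\overline{S}\to B_0+B_1$, each bounded, continuous on $\overline{S}$, analytic on $S$, with $F_n(\theta)=a_n$, $G_n(\theta)=b_n$ and normalized so that $\max\bigl(\sup_t\|F_n(it)\|_{A_0},\sup_t\|F_n(1+it)\|_{A_1}\bigr)$ is controlled by $\|a_n\|_{[A_0,A_1]_\theta}+\eps_n$ with $\eps_n\to 0$, and analogously for $G_n$. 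Then I would set $H_n(z):=T(F_n(z),G_n(z))$; by the bilinear boundedness of $T$ in $\mathcal{B}(\bar{A}\times\bar{B},\bar{E})$, $H_n$ is analytic on $S$, $(E_0+E_1)$-valued, with uniformly bounded boundary traces $H_n(it)\in E_0$ and $H_n(1+it)\in E_1$. In particular $H_n(\theta)=T(a_n,b_n)\in[E_0,E_1]_\theta$, which already furnishes the required bounded bilinear extension of $T$.

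To extract a convergent subsequence of $(T(a_n,b_n))_n$ in $[E_0,E_1]_\theta$, I would exploit the hypothesis that $T:A_1^\circ\times B_1^\circ\to E_1$ is compact. Since $A_1^\circ$ is by definition the closure of $A_0\cap A_1$ in $A_1$, a standard mollification of the $F_n$ in the $z$-variable (for instance, by convolving in the strip with a narrow Poisson-type kernel, so that the boundary traces at $\Re z=1$ actually lie in $A_0\cap A_1$) yields admissible approximations whose boundary values $F_n(1+it)$ now sit in $A_1^\circ$ with uniform $A_1$-bounds, and similarly for $G_n$. Then for each fixed $t\in\R$, the compactness of $T:A_1^\circ\times B_1^\circ\to E_1$, combined with a diagonal extraction in $n$, produces a subsequence along which $H_n(1+it)$ converges in $E_1$ for every rational $t$. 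The absolute continuity of the norm on $E_1$ would then be used to upgrade this pointwise-in-$t$ convergence to an equicontinuity estimate for $t\mapsto H_n(1+it)$ with values in $E_1$, so that an Arzel\`{a}--Ascoli-type statement delivers compactness of the family $\{H_n(1+\cdot)\}$ in a suitable vector-valued Hardy space on the line $\{\Re z=1\}$.

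The final step would be to transfer this boundary compactness to the interior point $z=\theta$ by the Poisson-kernel representation for the strip: $H_n(\theta)$ is an integral of $H_n(it)$ and $H_n(1+it)$ against the harmonic measure of the strip at $\theta$, so the uniform $E_0$-bound on $H_n(it)$ combined with the $E_1$-compactness of $H_n(1+\cdot)$ extracted above yields, along a subsequence, a norm limit of $H_n(\theta)$ in $[E_0,E_1]_\theta$. The main obstacle, and the precise reason both Banach function space hypotheses are invoked, is to justify rigorously this last limit passage inside $[E_0,E_1]_\theta$: the Fatou property of $E_0$ and $E_1$ is what allows one to promote pointwise-a.e.~subsequential limits of bounded sequences to norm control in the ambient Banach function spaces, while the absolute continuity of the norm on $E_1$ is what rules out mass-at-infinity or concentration phenomena when integrating boundary traces against the Poisson kernel. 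Assembling these three ingredients — Calder\'{o}n representation, mollification into $A_1^\circ$ and $B_1^\circ$ plus Arzel\`{a}--Ascoli on the boundary, and harmonic transfer to the interior — gives the desired compactness of $T:[A_0,A_1]_\theta\times[B_0,B_1]_\theta\to[E_0,E_1]_\theta$.
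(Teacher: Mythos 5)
First, a point of order: the paper does not prove this statement at all --- it is quoted from \cite{CFCM2020} (with the detailed exposition deferred to \cite{Kouts2021}), and the surrounding remarks only verify that the weaker definition of Banach function space suffices. So there is no in-paper proof to compare against; judged on its own terms, your proposal has a genuine gap at its central step.

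Your strategy is Cwikel's strip argument, and it breaks down exactly where one-sided compactness interpolation for the complex method is known to be hard. From the compactness of $T:A_1^{\circ}\times B_1^{\circ}\to E_1$ you can indeed extract, by diagonalization, a subsequence along which $H_n(1+it)$ converges in $E_1$ for each fixed rational $t$. But to conclude anything at $z=\theta$ you need the three-lines estimate
\begin{equation*}
  \|H_n(\theta)-H_m(\theta)\|_{[E_0,E_1]_\theta}\le \Bigl(\sup_t\|H_n(it)-H_m(it)\|_{E_0}\Bigr)^{1-\theta}\Bigl(\sup_t\|H_n(1+it)-H_m(1+it)\|_{E_1}\Bigr)^{\theta},
\end{equation*}
and here the first factor is merely bounded (by roughly $2M_0$), never small, so everything hinges on making the second factor small \emph{uniformly in $t$}. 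Pointwise-in-$t$ subsequential convergence cannot be upgraded to this: the boundary traces $t\mapsto F_n(1+it)$ of Calder\'{o}n representatives are only uniformly bounded, not equicontinuous, and the absolute continuity of the norm of $E_1$ is a statement about shrinking supports in $\Omega$ --- it gives no equicontinuity whatsoever in the strip variable $t$, so the Arzel\`{a}--Ascoli step has no basis. Likewise, the Poisson-kernel representation of $H_n(\theta)$ mixes the $\Re z=0$ trace (only bounded in $E_0$) with the $\Re z=1$ trace, and a ``bounded plus convergent'' decomposition yields at best convergence in $E_0+E_1$, not in $[E_0,E_1]_\theta$. This is precisely the obstruction that makes one-sided compactness interpolation for $[\cdot,\cdot]_\theta$ an open problem for general Banach couples; the theorem holds here only because $(E_0,E_1)$ is a couple of Banach function spaces, and that lattice structure must enter the argument in an essential way, not merely as bookkeeping for limits.

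Concretely, in the known proofs of results of this type (Cwikel--Kalton in the linear case, \cite{CFCM2020} in the bilinear one), the absolute continuity of the norm of $E_1$ together with the exhaustion provided by Lemma~\ref{lm:Saturation_Exhaustion} is used to build truncation operators $Q_k$ (multiplication by $\chi_{A_k}$, combined with cutting off large values) that are simultaneous contractions on $E_0$ and $E_1$ by the ideal property and converge strongly to the identity on $E_1$; compactness of $T$ at the endpoint upgrades this to $\|(I-Q_k)\circ T\|_{A_1^{\circ}\times B_1^{\circ}\to E_1}\to 0$, and the bilinear interpolation inequality $\|S\|_{\theta}\le \|S\|_{0}^{1-\theta}\|S\|_{1}^{\theta}$ then gives $Q_k\circ T\to T$ in operator norm at level $\theta$, reducing the problem to operators with truncated range; the Fatou property is what identifies $[E_0,E_1]_\theta$ with (the closure of $E_0\cap E_1$ in) the Calder\'{o}n product $E_0^{1-\theta}E_1^{\theta}$, where the compactness of the truncated operators is finally established. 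None of this machinery appears in your proposal, and without it the strip argument does not close.
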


Several remarks are in order.

\begin{remark}
    \item[(a)] In the original statement of \cite[Theorem 3.2]{CFCM2020}, the compactness property was assumed for $T:A_0^{\circ}\times B_0^{\circ}\to E_0$. However, because of the symmetry of the interpolation operator (see for example \cite[Theorem 4.2.1]{BeLo1976}), this is equivalent to the formulation given here.

    \item[(b)] The definition of Banach function spaces in \cite{CFCM2020} includes more assumptions than our own. However, a careful examination of the very detailed exposition of the proof of \cite[Theorem 3.2]{CFCM2020} in \cite{Kouts2021} (and the references therein) shows that the definition of Banach function spaces assumed here is sufficient.

    In particular, let us remark that whenever one needs a sequence $(A_k)^{\infty}_{k=1}$ of measurable sets having finite measure such that $\chi_{A_k}$ lies in the considered Banach space, for al $k=1,2,\ldots$ and $\displaystyle\bigcup_{k=1}^{\infty}A_k=\Omega$, such as in the proof of \cite[Proposition 3.3.3.5]{Kouts2021}, one can use Lemma~\ref{lm:Saturation_Exhaustion}. That is, one does not need to assume that the characteristic function of any measurable set of finite measure lies in the Banach space.
\end{remark}

\begin{remark}\label{rem:Banach_couple}
For $j=0,1$, let $p_j(\cdot)\in\mathscr{P}$ and $w_j$ be weights on $\R^n$, $j=0,1$. Then, by Proposition~\ref{prop:embedding_variable_Lebesgue} we have that $(L^{p_0(\cdot)}(w_0),L^{p_1(\cdot)}(w_1))$ is a Banach couple.
\end{remark}

\section{Proof of abstract main results}\label{sec: pf. main result}

In this section we finalize the proofs of our abstract main results, Theorems~\ref{thm:main_result} and~\ref{thm:main_result_limited_range}.

The last ingredient we need is the following interpolation result which can be found in \cite{Medalha2020} (see also \cite[Theorem 3.2]{FKM2022}).

\begin{theorem}[\cite{Medalha2020}, Theorems 3.4.1 and 3.4.2]\label{thm:SW}
Let $0<\theta<1$. For $j=0,1$, let $p_j(\cdot)\in\mathscr{P}$ and $w_j$ be weights on $\R^n$, $j=0,1$. Then 
\begin{equation*}
    [L^{p_0(\cdot)}(w_0),L^{p_1(\cdot)}(w_1)]_{\theta}=L^{p_\theta(\cdot)}(w_{\theta}),
\end{equation*}
where 
\begin{equation*}
    \frac{1}{p_{\theta}(\cdot)}=\frac{1-\theta}{p_0(\cdot)}+\frac{\theta}{p_1(\cdot)}\quad\text{and}\quad w_{\theta}=w_0^{1-\theta}w_1^{\theta}.
\end{equation*}
Moreover, the norms of the spaces $[L^{p_0(\cdot)}(w_0),L^{p_1(\cdot)}(w_1)]_{\theta}$ and $L^{p_\theta(\cdot)}(w_{\theta})$ are equivalent. 
\end{theorem}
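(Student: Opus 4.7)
The plan is to deduce the theorem by combining the Calder\'on--Lozanovskii identification of the complex interpolation space of a Banach couple of Banach function lattices with their Calder\'on product, together with an explicit computation of that product in the present variable exponent weighted setting.

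First I would verify the abstract hypotheses. By Remark~\ref{rem:Banach_couple}, $(L^{p_0(\cdot)}(w_0), L^{p_1(\cdot)}(w_1))$ forms a Banach couple, and by Lemma~\ref{lm:Weighted_Variable_Lebesgue_BFS} each endpoint is a reflexive Banach function space on $\R^n$ enjoying both the Fatou property and an absolutely continuous norm. These are precisely the conditions under which the classical Calder\'on--Lozanovskii theorem produces, with equivalence of norms,
\begin{equation*}
[L^{p_0(\cdot)}(w_0), L^{p_1(\cdot)}(w_1)]_\theta = L^{p_0(\cdot)}(w_0)^{1-\theta}\cdot L^{p_1(\cdot)}(w_1)^\theta,
\end{equation*}
where the right-hand side denotes the Calder\'on product, consisting of all measurable $f$ admitting a factorization $|f|\leq|f_0|^{1-\theta}|f_1|^\theta$ with $f_j\in L^{p_j(\cdot)}(w_j)$, normed by the infimum of $\|f_0\|_{L^{p_0(\cdot)}(w_0)}^{1-\theta}\|f_1\|_{L^{p_1(\cdot)}(w_1)}^\theta$. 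This reduces the theorem to identifying the Calder\'on product with $L^{p_\theta(\cdot)}(w_\theta)$.

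The identification itself would be handled by two complementary inclusions. For the embedding of the Calder\'on product into $L^{p_\theta(\cdot)}(w_\theta)$, given any admissible factorization $|f|\leq|f_0|^{1-\theta}|f_1|^\theta$, I would multiply pointwise by $w_\theta=w_0^{1-\theta}w_1^\theta$ and apply the generalized H\"older inequality for variable exponents (Lemma~\ref{Gen. Holder's ineq.}) with exponents $p_0(\cdot)/(1-\theta)$ and $p_1(\cdot)/\theta$, whose harmonic sum is $p_\theta(\cdot)$, combined with Lemma~\ref{lem:homog}, to deduce $\|f\|_{L^{p_\theta(\cdot)}(w_\theta)}\lesssim\|f_0\|_{L^{p_0(\cdot)}(w_0)}^{1-\theta}\|f_1\|_{L^{p_1(\cdot)}(w_1)}^\theta$; taking the infimum over factorizations then yields the inclusion. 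For the reverse, I would construct the canonical factorization of $f\in L^{p_\theta(\cdot)}(w_\theta)$ by setting $f_j := w_j^{-1}\cdot|fw_\theta|^{p_\theta(\cdot)/p_j(\cdot)}$, verifying pointwise that $|f|=|f_0|^{1-\theta}|f_1|^\theta$ directly from the identity $\tfrac{1-\theta}{p_0(\cdot)}+\tfrac{\theta}{p_1(\cdot)}=\tfrac{1}{p_\theta(\cdot)}$, and comparing modulars $\rho_{p_j(\cdot)}(f_jw_j)=\rho_{p_\theta(\cdot)}(fw_\theta)$ to match the norms after a standard normalization.

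The hardest part will be justifying the Calder\'on--Lozanovskii identification under the relatively weak Banach function space axioms adopted in Section~\ref{sec: Lions--Peetre comp. thm.}, which assume only the saturation property instead of the stronger requirement that $\chi_E\in X$ for every set $E$ of finite measure traditionally imposed in references on complex interpolation of lattices. One must check carefully that the classical proof survives, with the exhaustion from Lemma~\ref{lm:Saturation_Exhaustion} playing the role of arbitrary finite-measure characteristic functions. A cleaner alternative that bypasses abstract lattice interpolation is to run the classical Stein--Weiss construction on the Calder\'on strip of analytic functions directly in the variable exponent setting, thereby reducing the weighted case to the unweighted variable exponent complex interpolation theorem via the isometries $f\mapsto fw_j$; either route is essentially standard given the preparatory material already assembled.
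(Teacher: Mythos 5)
The paper does not actually prove Theorem~\ref{thm:SW}: it is quoted from \cite{Medalha2020} (Theorems 3.4.1 and 3.4.2), and the surrounding remarks only explain why the local integrability hypotheses on $w_j^{\pm 1}$ imposed there are superfluous in view of Lemma~\ref{lm:Weighted_Variable_Lebesgue_BFS}. Your proposal is therefore a reconstruction of the cited proof rather than a genuinely different route; indeed, remark (c) after the theorem records that the argument in \cite{Medalha2020} uses only that weighted variable Lebesgue spaces are Banach lattices, which is precisely the Calder\'on--Lozanovskii strategy you outline. Your outline is correct: the endpoints have the Fatou property and absolutely continuous norm by Lemma~\ref{lm:Weighted_Variable_Lebesgue_BFS}, which is what the lattice interpolation theorem requires; the forward inclusion of the Calder\'on product into $L^{p_\theta(\cdot)}(w_\theta)$ follows from Lemma~\ref{lem:homog} and the generalized H\"older inequality exactly as you say; and the canonical factorization $f_j=w_j^{-1}|fw_\theta|^{p_\theta(\cdot)/p_j(\cdot)}$ together with the equality of modulars and the unit-ball characterization $\|g\|_{p(\cdot)}\le 1\Leftrightarrow\rho_{p(\cdot)}(g)\le1$ (valid since $(p_j)_+<\infty$, so $(p_\theta)_+<\infty$) gives the reverse inclusion after the homogeneity normalization. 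The only point requiring care is the one you flag yourself, namely that the Calder\'on--Lozanovskii identification holds under the weak (saturation-based) Banach function space axioms of Section~\ref{sec: Lions--Peetre comp. thm.}; this is not a gap but a matter of invoking a sufficiently general formulation (the exhaustion of Lemma~\ref{lm:Saturation_Exhaustion} substitutes for arbitrary finite-measure indicators, as the paper already notes in its remarks on Theorem~\ref{thm:CFCM}). What your writeup buys over the paper's treatment is self-containedness; what it costs is having to carry out that axiom-checking explicitly, which the paper sidesteps by citation.
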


We make some remarks about Theorem~\ref{thm:SW}:

\begin{remark}
    \item[(a)] From Remark~\ref{rem:Banach_couple} we know that $(L^{p_0(\cdot)}(w_0),L^{p_1(\cdot)}(w_1))$ is a Banach couple.
    
    \item[(b)] The statement of \cite[Theorem 3.4.1]{Medalha2020} includes also the assumptions that $w_j\in L_{\text{loc}}^{p_{j}(\cdot)}$ and $w_j^{-1}\in L_{\text{loc}}^{p_{j}'(\cdot)}$. However, in view of Lemma~\ref{lm:Weighted_Variable_Lebesgue_BFS}, a careful examination of the proof given there shows that this assumption is superfluous.

    \item[(c)] Also, the statement of \cite[Theorem 3.4.2]{Medalha2020} includes the assumptions that $w_j\in L_{\text{loc}}^{p_{j}(\cdot)}$ and $w_j^{-1}\in L_{\text{loc}}^{p_{j}'(\cdot)}$. However, as commented in the proof of \cite[Theorem 3.2]{FKM2022}, the only property used in the proof is that weighted variable Lebesgue spaces are Banach lattices, which in view of Lemma~\ref{lm:Weighted_Variable_Lebesgue_BFS} does not require these extra assumptions.
\end{remark}

We have now all ingredients to prove Theorems~\ref{thm:main_result} and~\ref{thm:main_result_limited_range}.

\begin{proof}[Proof of Theorem~\ref{thm:main_result}]
Let $(\vec{p}(\cdot), q(\cdot),\vec{1},\infty)$ be a proper $2$-admissible quadruple such that
\begin{equation*}
    q_{-} > 1,
\end{equation*}
\begin{equation*}
    t<\min\{(p_j)_{-}:~1,2\}
\end{equation*}
and
\begin{equation*}
    \frac{1}{p(\cdot)}-\frac{1}{q(\cdot)}=\gamma.
\end{equation*}
Let also $\vec{w}$ be a 2-tuple of weights with $\vec{w}\in\mathcal{A}_{\vec{p}(\cdot),q(\cdot)}$ and $\vec{w}^{t}\in\mathcal{A}_{\frac{\vec{p}(\cdot)}{t},\frac{q(\cdot)}{t}}$. We will show that $T:L^{p_1(\cdot)}(w_1)\times L^{p_2(\cdot)}(w_2)\to L^{q(\cdot)}(\nu_{\vec{w}})$ compactly.

To see that, we first apply Lemma~\ref{lem:KeyLemma_with_t}: It delivers $\theta>0$ and a proper $2$-admissible quadruple $(\vec{p}_0(\cdot), q_0(\cdot),\vec{1},\infty)$, such that
\begin{equation*}
    (q_0)_{-} > 1,
\end{equation*}
\begin{equation*}
    t<\min\{(p_0,j)_{-}:~j=1,2\},
\end{equation*}
\begin{equation*}
\frac{1}{\vec{p}(\cdot)}=\frac{1-\theta}{\vec{p}_0(\cdot)}+\frac{\theta}{\vec{p}_1(\cdot)},\quad \frac{1}{q(\cdot)}=\frac{1-\theta}{q_0(\cdot)}+\frac{\theta}{q_1(\cdot)}
\end{equation*}
and
\begin{equation*}
    \quad\frac{1}{p_0(\cdot)}-\frac{1}{q_0(\cdot)}=\gamma,
\end{equation*}
as well as a 2-tuple of weights $\vec{w}_0$ with
\begin{equation*}
    \vec{w}=\vec{w}_0^{1-\theta}\vec{w}_1^{\theta}    
\end{equation*}
such that $\vec{w}_0\in\mathcal{A}_{\vec{p}_0(\cdot),q_0(\cdot)}$ and $\vec{w}_0^{t}\in\mathcal{A}_{\frac{\vec{p}_0(\cdot)}{t},\frac{q_0(\cdot)}{t}}$.
   
Set now
\begin{equation*}
    A_j:=L^{p_{j,1}(\cdot)}(w_{j,1}),\quad B_j:=L^{p_{j,2}(\cdot)}(w_{j,2}),\quad E_j:=L^{q_j(\cdot)}(\nu_{\vec{w}_j}),\quad j=0,1.
\end{equation*}
By Lemma~\ref{lm:Weighted_Variable_Lebesgue_BFS} and Remark~\ref{rem:Banach_couple} we have that $A_j,B_j,E_j$, $j=0,1$ satisfy all assumptions of Theorem~\ref{thm:CFCM}. Moreover, since by assumption the operator $T:A_1\times B_1\to E_1$ is compact and $A_1^{\circ}$, respectively $B_1^{\circ}$, is a subspace of $A_1$, respectively $B_1$, we easily deduce from the equivalent characterizations of compactness of bilinear operators (see for instance \cite[Proposition 1]{BenTor2013}) that also the restriction $T:A_1^{\circ}\times B_1^{\circ}\to E_1$ is compact. Therefore, Theorem~\ref{thm:CFCM} yields
\begin{equation*}
    T:[A_0,A_1]_{\theta}\times[B_0,B_1]_{\theta}\to[E_0,E_1]_{\theta}\quad\text{compactly}.
\end{equation*}
Applying Theorem~\ref{thm:SW} we finally deduce
\begin{equation*}
    T:L^{p_{1}(\cdot)}(w_{1})\times L^{p_{2}(\cdot)}(w_{2})\to L^{q(\cdot)}(\nu_{\vec{w}})\quad\text{compactly}.
\end{equation*}
\end{proof}

\begin{proof}[Proof of Theorem~\ref{thm:main_result_limited_range}] 
Let $(\vec{p}(\cdot), q(\cdot),\vec{r},s)$ be a proper $2$-admissible quadruple such that
\begin{equation*}
    q_{-} > 1
\end{equation*}
and
\begin{equation*}
    \frac{1}{p(\cdot)}-\frac{1}{q(\cdot)}=\gamma.
\end{equation*}
Let also $\vec{w}$ be a 2-tuple of weights with $\vec{w}\in\mathcal{A}_{(\vec{p}(\cdot),q(\cdot),\vec{r},s)}$. We will show that $T:L^{p_1(\cdot)}(w_1)\times L^{p_2(\cdot)}(w_2)\to L^{q(\cdot)}(\nu_{\vec{w}})$ compactly.

To see that, we first apply Lemma~\ref{lem:KeyLemma}: It delivers $\theta>0$ and a proper $2$-admissible quadruple $(\vec{p}_0(\cdot), q_0(\cdot),\vec{r},s)$, such that
\begin{equation*}
    (q_0)_{-} > 1,
\end{equation*}
\begin{equation*}
    \frac{1}{\vec{p}(\cdot)}=\frac{1-\theta}{\vec{p}_0(\cdot)}+\frac{\theta}{\vec{p}_1(\cdot)},\quad \frac{1}{q(\cdot)}=\frac{1-\theta}{q_0(\cdot)}+\frac{\theta}{q_1(\cdot)}
    \end{equation*}
and
\begin{equation*}
    \quad\frac{1}{p_0(\cdot)}-\frac{1}{q_0(\cdot)}=\gamma,
\end{equation*}
as well as a 2-tuple of weights $\vec{w}_0$ with
\begin{equation*}
    \vec{w}=\vec{w}_0^{1-\theta}\vec{w}_1^{\theta}    
\end{equation*}
such that $\vec{w}_0\in\mathcal{A}_{(\vec{p}_0(\cdot),q_0(\cdot),\vec{r},s)}$.

The rest of the proof is similar as the one of Theorem~\ref{thm:main_result}. 
\end{proof}

\section{Applications}\label{sec: applications}

In this section, we provide several applications of our compact extrapolation Theorems \ref{thm:main_result} and \ref{thm:main_result_limited_range} for bilinear operators. In particular, we will show the compactness of commutators of bilinear $\omega$-Calder\'on--Zygmund operators, bilinear fractional integral operators and bilinear Fourier multipliers.

\subsection{Notation for multilinear commutators}

Given some $m$-linear operator $T$ accepting as arguments $m$-tuples $\vec{f}=(f_1,\ldots,f_m)$ of appropriate functions on $\R^n$ and some appropriately smooth function $b$ on $\R^n$, we define for each $j=1,\ldots,m$ the commutator
\begin{equation*}
    [T,b]_{e_j}(\vec{f})(x) :=b(x)T(\vec{f})(x)-T(f_1, \ldots,bf_j,\ldots,f_m)(x).
\end{equation*}

As we will see below, in order to apply our extrapolation Theorem~\ref{thm:main_result} we require that the pointwise multiplier $b$ belongs to the spaces
\begin{equation*}
    \BMO(\R^n):=\Big\{f:\R^n\to\C\ \Big|\ \Norm{f}{\BMO(\R^n)}:=\sup_{Q\subset\R^n}\ave{\abs{f-\ave{f}_Q}}_Q<\infty\Big\}
\end{equation*}
or
\begin{equation*}
    \CMO(\R^n):=\overline{C_c^\infty(\R^n)}^{\BMO(\R^n)}.
\end{equation*}

Moreover, given a $m$-tuple $\vec{b}=(b_1,\ldots,b_m)$ of appropriately smooth functions on $\R^n$, we consider the \emph{sum commutator}
\begin{equation*}
    [T,\vec{b}]_{\Sigma}(\vec{f}) := \sum_{j=1}^{m}[T,b_j]_{e_j}(\vec{f}).
\end{equation*}

\subsection{Notation for maximal operators}

For $\delta,\varepsilon,\alpha>0$ and $\vec{r}=(r_1,\ldots,r_m)\in(1,\infty)^m$ we consider the maximal operators
\begin{equation*}
    M^{\#}_{\delta}(f)(x) := \sup_{\substack{Q\text{ cube in }\R^n\\x\in Q}}\left(\frac{1}{|Q|}\int_{Q}||f(y)|^{\delta}-\langle |f|^{\delta}\rangle_{Q}|\,\mathrm{d}y\right)^{1/\delta},
\end{equation*}
\begin{equation*}
    M_{\varepsilon}(f)(x) := \sup_{\substack{Q\text{ cube in }\R^n\\x\in Q}}\left(\frac{1}{|Q|}\int_{Q}|f(y)|^{\varepsilon}\,\mathrm{d}y\right)^{1/\varepsilon},
\end{equation*}
\begin{equation*}
    \mathcal{M}_{\alpha}(\vec{f})(x) := \sup_{\substack{Q\text{ cube in }\R^n\\x\in Q}}\frac{1}{|Q|^{m-\frac{\alpha}{n}}}\prod_{j=1}^{m}\int_{Q}|f_j(y_j)|\mathrm{d}y_j,
\end{equation*}
\begin{equation*}
    \mathcal{M}(\vec{f})(x) :=\sup_{\substack{Q\text{ cube in }\R^n\\x\in Q}}\prod_{j=1}^{m}\frac{1}{|Q|}\int_{Q}|f_j(y_j)|\mathrm{d}y_j,
\end{equation*}
    and
\begin{equation*}
    \mathcal{M}_{\alpha,\mathrm{L(log L)}}(\vec{f})(x) := \sup_{\substack{Q\text{ cube in }\R^n\\x\in Q}}|Q|^{\alpha/n}\prod_{k=1}^{m}\Vert f_k\Vert_{\mathrm{L(log L),Q}}, 
\end{equation*}
    where $\Vert f\Vert_{\mathrm{L(log L),Q}}$ is the usual $\mathrm{L(\log L)}$ Luxemburg norm of $f\mathbf{1}_{Q}$ over $Q$ with respect to normalized Lebesgue measure on $Q$, concretely
\begin{equation*}
    \Vert f\Vert_{\mathrm{L(log L),Q}} := \inf\left\{\lambda>0:~\frac{1}{|Q|}\int_{Q}\frac{1}{\lambda}\log\left(e+\frac{|f(y)|}{\lambda}\,\mathrm{d}y\right)\leq1\right\}.
\end{equation*}

\subsection{Bilinear \texorpdfstring{$\omega$}{ω}-Calder\'on--Zygmund operators}\label{subsec:CZ}

Let $\omega:[0,\infty)\rightarrow[0,\infty)$ be a modulus of continuity, that is, $\omega$ is increasing, subadditive and $\omega(0)=0$. A function  $K(x,y_1,\ldots,y_m)$ on $(\R^n)^{{m+1}}$ defined away from the diagonal $x=y_1=\ldots=y_m$ is said to be an \emph{$\omega$-Calder\'{o}n--Zygmund kernel} if the following \emph{size and smoothness conditions} are satisfied:    
\begin{align*}
    |K(x,y_1,\ldots,y_m)|&\lesssim \frac{1}{(\sum_{j=1}^m|x-y_j|)^{mn}},\\
    |K(x,y_1,\ldots,y_m)-K(x',y_1,\ldots,y_m)|&\lesssim \frac{1}{(\sum_{j=1}^m|x-y_j|)^{mn}}\omega\bigg(\frac{|x-x'|}{\sum_{j=1}^m|x-y_j|}\bigg),
\end{align*} 
whenever $|x-x'|\leq\frac{1}{2}\max_{j=1,\ldots,m}|x-y_j|$, and for each $i=1,\dots,m,$
\begin{equation*}
    |K(x,y_1,\ldots,y_i,\ldots,y_m)-K(x,y_1,\ldots,y_i',\ldots,y_m)| \lesssim \frac{1}{(\sum_{j=1}^m|x-y_j|)^{mn}}\omega\bigg(\frac{|y_i-y_i'|}{\sum_{j=1}^m|x-y_j|}\bigg),
\end{equation*}
whenever $|y_i-y_i'|\leq\frac{1}{2}\max_{j=1,\ldots,m}|x-y_j|$.

The $m$-linear operator $T:\mathcal{S}(\R^n)\times\cdots\times\mathcal{S}(\R^n)\rightarrow\mathcal{S}'(\R^n)$ is said to be an $\omega$-Calder\'on--Zygmund operator if it extends to a bounded multilinear operator from $L^{q_1}(\R^n)\times\cdots\times L^{q_m}(\R^n)$ to $L^q(\R^n)$ for some $\frac{1}{q}=\frac{1}{q_1}+\cdots+\frac{1}{q_m}$ with $1<q_1,\dots,q_m<\infty$, and there exists an $\omega$-Calder\'on--Zygmund kernel $K$ such that
\begin{equation*}
    T(\vec{f})(x) := \int_{(\R^n)^m}K(x,y_1,\ldots,y_m)f(y_1)\cdots f(y_m)\,\mathrm{d}y_1\cdots\mathrm{d}y_m,
\end{equation*}
for all $x\notin\cap_{j=1}^m\supp f_j$, $j=1,\dots,m$.

We recall that a modulus of continuity $\omega$ is said to satisfy the Dini condition, denoted by $\omega\in$ Dini, if 
\begin{equation*}
    \|\omega\|_{\text{Dini}}:=\int_0^1\omega(t)\frac{\mathrm{d}t}{t}<\infty.
\end{equation*}
Notice that the modulus of continuity $\omega(t)=t^\varepsilon$ with $\varepsilon>0$ satisfies the Dini condition. This example appeared in the work of Grafakos and Torres \cite{GT2002}, where they studied the (standard) Calder\'on--Zygmund operator. It is worth to mention that for the general $\omega$, the linear $\omega$-Calder\'on--Zygmund operator was studied by Yabuta in \cite{Yabuta1985}. A further extension of this result was obtained by Maldonado and Naibo \cite{MN2009} in the bilinear case. 

The authors of \cite[Theorem 5.1]{CaoOlivoYabuta2022} proved the compactness of commutators of m-linear $\omega$-Calder\'on--Zygmund operators on weighted Lebesgue spaces  
with constant exponents. We point out that one part of the proof of \cite[Theorem 5.1]{CaoOlivoYabuta2022} relied on showing the following unweighted compactness result.  

\begin{theorem}[\cite{CaoOlivoYabuta2022}]\label{thm:comp_bil_CZO}
Let $b\in\mathrm{CMO}(\R^n)$, $1<p_1,\ldots,p_m<\infty$ and $0<p<\infty$ with $\frac{1}{p}:=\sum_{j=1}^{m}\frac{1}{p_j}$. Then, $[T,b]_{e_j}$ maps $L^{p_1}(\R^n)\times\cdots\times L^{p_m}(\R^n)$ into $L^{p}(\R^n)$ compactly for every $j=1,\ldots,m$.    
\end{theorem}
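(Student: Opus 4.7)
The plan is to combine a density reduction in $\CMO$ with the Fr\'echet--Kolmogorov compactness criterion in $L^p(\R^n)$. The main inputs are the standard multilinear commutator bound
\[
\|[T,\beta]_{e_j}(\vec{f})\|_{L^p}\lesssim \|\beta\|_{\BMO}\prod_{i=1}^{m}\|f_i\|_{L^{p_i}},\qquad \beta\in\BMO,
\]
valid for $\omega$-Calder\'on--Zygmund operators $T$ with $\omega\in\mathrm{Dini}$, together with the pointwise kernel size and smoothness estimates.

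First I would reduce to the case $b\in C_c^\infty(\R^n)$. Since $b\in\CMO(\R^n)=\overline{C_c^\infty(\R^n)}^{\BMO}$, pick $b_k\in C_c^\infty(\R^n)$ with $\|b-b_k\|_{\BMO}\to 0$. The displayed commutator bound applied to $\beta=b-b_k$ shows $[T,b_k]_{e_j}\to [T,b]_{e_j}$ in operator norm. Since operator-norm limits of compact multilinear operators are compact, it suffices to treat $b\in C_c^\infty(\R^n)$ with $\supp b\subset B(0,R)$ for some $R>0$.

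Fixing such $b$, I would verify for the image set $\mathcal{K}:=\{[T,b]_{e_j}(\vec{f}):\|f_i\|_{L^{p_i}}\leq 1,\ i=1,\ldots,m\}\subset L^p(\R^n)$ the three conditions of Fr\'echet--Kolmogorov. Uniform $L^p$-boundedness is immediate from the commutator estimate. For tightness at infinity, observe that for $|x|\geq 2R$ the $b(x)T(\vec{f})(x)$ summand vanishes, and $bf_j$ is supported in $B(0,R)\subset B(0,|x|/2)$, which forces $|x-y_j|\geq|x|/2$ inside the kernel integral defining $T(f_1,\ldots,bf_j,\ldots,f_m)(x)$; a dyadic splitting in the remaining variables $y_i$ ($i\neq j$) combined with H\"older's inequality in the $L^{p_i}$ norms yields decay in $|x|$ fast enough to give $\|[T,b]_{e_j}(\vec{f})\chi_{\{|x|>A\}}\|_{L^p}\to 0$ as $A\to\infty$, uniformly in $\vec{f}$.

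The most delicate step is equicontinuity under translation. Setting $S(\vec{f}):=T(f_1,\ldots,bf_j,\ldots,f_m)$, I would decompose
\[
[T,b]_{e_j}(\vec{f})(x+h)-[T,b]_{e_j}(\vec{f})(x)=(b(x+h)-b(x))T(\vec{f})(x+h)+b(x)(T(\vec{f})(x+h)-T(\vec{f})(x))-(S(\vec{f})(x+h)-S(\vec{f})(x)).
\]
Lipschitz continuity of $b$ together with $L^p$-boundedness of $T$ disposes of the first summand. For the remaining two, the kernel integrals are split according to whether $|h|\leq\tfrac12\max_i|x-y_i|$: in that regime the $\omega$-smoothness estimate applies and produces contributions whose dyadic sum converges thanks to $\|\omega\|_{\mathrm{Dini}}<\infty$, while in the complementary regime only the size bound is available but on a region of small measure. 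Careful summation of a dyadic annular decomposition then yields uniform convergence to zero as $|h|\to 0$. This quantitative dyadic argument exploiting the Dini condition is the main obstacle, since the merely qualitative $L^p$-boundedness of $T$ does not by itself produce the required smallness.
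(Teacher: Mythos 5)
First, note that the paper does not prove this statement at all: Theorem~\ref{thm:comp_bil_CZO} is imported verbatim from \cite{CaoOlivoYabuta2022}, so your proposal is being measured against the argument in that reference (which follows the Bényi--Torres smooth-truncation scheme), not against anything in this text. Your overall architecture --- reduce to $b\in C_c^\infty(\R^n)$ by density in $\CMO$ and the operator-norm bound $\Vert [T,\beta]_{e_j}\Vert\lesssim\Vert\beta\Vert_{\BMO}$, then verify the Fréchet--Kolmogorov criterion --- is the right one, and your treatment of uniform boundedness and of decay at infinity is essentially correct (for the decay you should also record that the resulting exponent of $|x|$ beats $n/p$ precisely because $p_j>1$, and that for $p<1$ you need the quasi-Banach version of Fréchet--Kolmogorov and of ``norm limits of compact operators are compact'').

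The genuine gap is in the equicontinuity step. You split the translation difference into three summands and propose to estimate $b(x)\bigl(T(\vec{f})(x+h)-T(\vec{f})(x)\bigr)$ and $S(\vec{f})(x+h)-S(\vec{f})(x)$ \emph{separately}. Neither of these is uniformly small over the unit ball: if it were, then $\chi_{B(0,R)}\,T$ (respectively $T(\,\cdot\,,\ldots,b\,\cdot\,,\ldots)$) would itself be compact, which is false --- concentrating bumps $f_{i,k}(y)=2^{kn/p_i}\phi(2^k y)$ produce images with no convergent subsequence. Concretely, your fallback in the regime $|h|>\tfrac12\max_i|x-y_i|$, ``only the size bound on a region of small measure,'' fails: the integral
\begin{equation*}
\int_{\max_i|x-y_i|<2|h|}\frac{\prod_i|f_i(y_i)|}{\bigl(\sum_i|x-y_i|\bigr)^{mn}}\,\mathrm{d}\vec{y}
\end{equation*}
is not even absolutely convergent in general, since each dyadic shell $\max_i|x-y_i|\sim 2^{-k}|h|$ contributes an amount comparable to $\mathcal{M}(\vec{f})(x)$ and the shells do not sum. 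What saves the argument is the cancellation \emph{between} your second and third summands: one must write the commutator as a single kernel integral against $(b(x)-b(y_j))K(x,\vec{y})$ (after a smooth truncation at scale $\delta$, with the truncation error controlled in operator norm precisely by the factor $|b(x)-b(y_j)|\lesssim\Vert\nabla b\Vert_\infty|x-y_j|$), so that the near-diagonal region acquires the extra factor $|x-y_j|\lesssim 2^{-k}|h|$, making the dyadic sum converge to $O(|h|\,\mathcal{M}(\vec{f})(x))$; the off-diagonal region is then handled by the $\omega$-smoothness and the Dini condition as you describe. Without combining the two terms your decomposition cannot close, and this is not a technicality but the central point of the known proof.
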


We will extend Theorem \ref{thm:comp_bil_CZO} to the weighted variable Lebesgue spaces in the bilinear setting. To this end, we need the following two weighted boundedness results. 

\begin{theorem}\label{thm:bound_bil_CZO_1}
If $T$ is a bilinear $\omega$-Calder\'on--Zygmund operator, then for all proper $2$-admissible quadruples $(\vec{p}(\cdot),\vec{p}(\cdot),\vec{1},\infty)$ and for all pairs of weights $\vec{w}$ with $\vec{w}\in\mathcal{A}_{\vec{p}(\cdot)}$, it holds that $T$ maps $L^{p_1(\cdot)}(w_1)\times\cdots\times L^{p_m(\cdot)}(w_m)$ into $L^{p(\cdot)}(\nu_{\vec{w}})$ boundedly.
\end{theorem}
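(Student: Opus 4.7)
The plan is to deduce the desired variable exponent weighted boundedness from the corresponding classical constant exponent weighted boundedness via a multilinear Rubio de Francia type extrapolation theorem in the variable exponent setting.

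First, I would invoke the classical bilinear weighted theory: for any $\vec{p}=(p_1,p_2)$ with $1<p_1,p_2<\infty$ and $\frac{1}{p}=\frac{1}{p_1}+\frac{1}{p_2}$, and for any weight vector $\vec{w}\in A_{\vec{p}}$ (the standard bilinear Muckenhoupt class from \cite{LOPTT09}), a bilinear $\omega$-Calder\'on--Zygmund operator $T$ with $\omega\in\mathrm{Dini}$ acts boundedly from $L^{p_1}(w_1)\times L^{p_2}(w_2)$ to $L^{p}(\nu_{\vec{w}})$. This is by now a standard consequence of either a direct adaptation of the arguments in \cite{LOPTT09} to kernels with a Dini modulus of continuity, or of the pointwise bilinear sparse domination of $T$ combined with the well-known weighted boundedness of bilinear sparse operators.

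Second, I would apply a multilinear extrapolation theorem for boundedness from constant to variable exponents in the diagonal case ($\gamma=0$, $\vec{r}=\vec{1}$, $s=\infty$). Such a result, tailored to the weight class $\mathcal{A}_{\vec{p}(\cdot)}$ with log-H\"older continuous variable exponents, was first established in \cite{CG2020} and is available in a form directly matching our notation in \cite{CHSW2025}. By Remark~\ref{rmk:compare_definitions}(1), our weight class $\mathcal{A}_{\vec{p}(\cdot)}$ coincides with the one appearing in those works, and the ``proper'' hypothesis on the quadruple $(\vec{p}(\cdot),\vec{p}(\cdot),\vec{1},\infty)$ encodes precisely the log-H\"older continuity condition required to apply the extrapolation.

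The main technical point is essentially bookkeeping: verifying that the parameter ranges, admissibility conditions and weight class definitions line up exactly with those of the chosen extrapolation source. Once this alignment is confirmed, the conclusion follows immediately by composing the classical weighted estimate with the variable exponent extrapolation theorem.
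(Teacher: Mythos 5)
There is a genuine gap in your second step. You propose to pass from the classical constant-exponent weighted bounds (with $\vec{w}\in A_{\vec p}$ in the sense of \cite{LOPTT09}) to the variable-exponent weighted bounds with $\vec{w}\in\mathcal{A}_{\vec p(\cdot)}$ by invoking ``a multilinear extrapolation theorem for boundedness from constant to variable exponents\ldots tailored to the weight class $\mathcal{A}_{\vec p(\cdot)}$,'' attributed to \cite{CG2020} and \cite{CHSW2025}. No such theorem is available: as the introduction of this paper explicitly points out, it is still \emph{unknown} whether the exact linear or multilinear analogue of Rubio de Francia extrapolation of boundedness holds on weighted Lebesgue spaces with variable exponents for the classes $\mathcal{A}_{p(\cdot)}$, $\mathcal{A}_{\vec p(\cdot)}$ (see also \cite[Remark 1.7]{LO2025}). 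The references \cite{CG2020} and \cite{CHSW2025} do not prove such an extrapolation principle; they prove boundedness of specific operators (the bilinear maximal operator, bilinear singular integrals, the multilinear Riesz potential) by direct arguments. The existing variable-exponent extrapolation results in \cite{CW2017, Cao2022, Nieraeth_2023} do not produce conclusions for the weight class $\mathcal{A}_{\vec p(\cdot)}$ defined by the norm condition \eqref{eq:Apqrs constant} starting from constant-exponent $A_{\vec p}$ hypotheses, so your argument cannot be completed as stated.

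The route that does work, and the one the paper takes, avoids extrapolation entirely: one adapts the proof of \cite[Theorem 2.8]{CG2020}, replacing the pointwise estimate of \cite[Proposition 7.2]{CG2020} by the Fefferman--Stein type bound
\begin{equation*}
    M^{\#}_{\delta}(T(\vec f))(x)\lesssim\mathcal{M}(\vec f)(x),\qquad 0<\delta<1/m,
\end{equation*}
valid for $\omega$-Calder\'on--Zygmund operators with $\omega\in\mathrm{Dini}$ (see \cite[Theorem 6.1]{LuZhang2014}), and then uses the boundedness of the bilinear maximal operator $\mathcal{M}$ on $L^{p_1(\cdot)}(w_1)\times L^{p_2(\cdot)}(w_2)\to L^{p(\cdot)}(\nu_{\vec w})$ for $\vec w\in\mathcal{A}_{\vec p(\cdot)}$, which is the characterization from \cite[Theorem 2.4]{CG2020}. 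Your first step (the constant-exponent weighted theory) is fine but is not actually needed for this argument.
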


\begin{proof}
The proof is almost the same to the one given in \cite[Theorem 2.8]{CG2020}. In particular, the main difference is that instead of the pointwise estimate of \cite[Proposition 7.2]{CG2020} we make use of the following one (see \cite[Theorem 6.1]{LuZhang2014}) 
\begin{equation*}
    M^{\#}_{\delta}(T(\vec{f}))(x)\lesssim\mathcal{M}(\vec{f})(x),    
\end{equation*} 
where $0<\delta<1/m$.
\end{proof}

\begin{remark}
We notice that one of the main ingredients of the proof of \cite[Theorem 2.8]{CG2020} was the characterization of $\mathcal{A}_{\vec{p}(\cdot)}$ weights by the bilinear maximal operator $\mathcal{M}$ (see \cite[Theorem 2.4]{CG2020}). According to \cite[Remark 2.7]{CG2020}, one can obtain the multilinear version of this latter result. We also refer to the work \cite[Theorem 2.2]{CHSW2025}, in which the characterization of the $\mathcal{A}_{\vec{p}(\cdot),q(\cdot)}$ weights was established with the aid of the multilinear fractional maximal operator $\mathcal{M}_{\alpha}$.
\end{remark}

\begin{theorem}\label{thm:bound_bil_CZO_2}
Let $b\in\mathrm{BMO}(\R^n)$. Let $t$ be a fixed constant with $t>1$. Let $j\in\{1,\ldots,m\}$. Then, for all proper $m$-admissible quadruples $(\vec{p}(\cdot),\vec{p}(\cdot),\vec{1},\infty)$ with
\begin{equation*}
    t<\min\{(p_k)_{-}:~k=1,\ldots,m\},
\end{equation*}
and for all $m$-tuples of weights $\vec{w}$ with $\vec{w}\in\mathcal{A}_{\vec{p}(\cdot)}$ and $\vec{w}^{t}\in\mathcal{A}_{\frac{\vec{p}(\cdot)}{t}}$, it holds that $[T,b]_{e_j}$ maps $L^{p_1(\cdot)}(w_1)\times\cdots\times L^{p_m(\cdot)}(w_m)$ into $L^{p(\cdot)}(\nu_{\vec{w}})$ boundedly.
\end{theorem}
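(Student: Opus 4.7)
The plan is to reduce the desired weighted boundedness of the commutator to the boundedness of $T$ itself (Theorem~\ref{thm:bound_bil_CZO_1}) and of certain auxiliary maximal operators by passing through the sharp maximal function. As a first step, I would invoke the classical pointwise estimate
\begin{equation*}
    M^{\#}_{\delta}([T,b]_{e_j}(\vec{f}))(x)\lesssim\|b\|_{\BMO}\bigl(M_{\varepsilon}(T(\vec{f}))(x)+\mathcal{M}_{0,\mathrm{L(log L)}}(\vec{f})(x)\bigr),
\end{equation*}
valid for $0<\delta<\varepsilon<1$ with $\delta<1/m$. This estimate is available for bilinear $\omega$-Calder\'on--Zygmund operators from \cite{LuZhang2014} and its $m$-linear extensions.

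Next, I would establish a Fefferman--Stein-type inequality of the form
\begin{equation*}
    \|g\|_{L^{p(\cdot)}(\nu_{\vec{w}})}\lesssim\|M^{\#}_{\delta}(g)\|_{L^{p(\cdot)}(\nu_{\vec{w}})},
\end{equation*}
applied to $g=[T,b]_{e_j}(\vec{f})$. In the weighted variable exponent framework, the classical good-$\lambda$ argument transplants provided $\nu_{\vec{w}}$ belongs to a suitable scalar Muckenhoupt class associated with $p(\cdot)$; this is extracted from $\vec{w}\in\mathcal{A}_{\vec{p}(\cdot)}$ via Lemma~\ref{lem:Apqrs char.}, together with a standard density argument to ensure the finiteness of the left-hand side.

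It then remains to control the two terms on the right. For $M_{\varepsilon}(T(\vec{f}))$, writing $M_{\varepsilon}(h)=M(|h|^{\varepsilon})^{1/\varepsilon}$ and using Lemma~\ref{lem:homog}, the estimate reduces for $\varepsilon$ sufficiently close to $1$ to the Cruz-Uribe--Fiorenza--Neugebauer theorem for the scalar Hardy--Littlewood maximal operator, combined with Theorem~\ref{thm:bound_bil_CZO_1}. For the $L(\log L)$ term, I would use the pointwise domination
\begin{equation*}
    \mathcal{M}_{0,\mathrm{L(log L)}}(\vec{f})(x)\lesssim\mathcal{M}(|f_1|^{t},\dots,|f_m|^{t})(x)^{1/t}
\end{equation*}
valid for every $t>1$. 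By Lemma~\ref{lem:homog}, this reduces matters to the boundedness of the multilinear Hardy--Littlewood maximal operator $\mathcal{M}$ from $L^{p_1(\cdot)/t}(w_1^{t})\times\cdots\times L^{p_m(\cdot)/t}(w_m^{t})$ into $L^{p(\cdot)/t}(\nu_{\vec{w}}^{t})$, which is precisely where the auxiliary hypothesis $\vec{w}^{t}\in\mathcal{A}_{\vec{p}(\cdot)/t}$ enters, together with the condition $t<(p_k)_{-}$ ensuring $\vec{p}(\cdot)/t\in\mathscr{P}^{m}$ so that the multilinear maximal-operator characterization of \cite{CG2020} applies.

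The principal technical difficulty I anticipate is the rigorous verification of the Fefferman--Stein inequality in the weighted variable exponent setting, since the good-$\lambda$ argument must accommodate both the local logarithmic distortion of the variable exponent and the precise $A_{\infty}$-type behaviour of $\nu_{\vec{w}}$. With this ingredient in place, the role of the auxiliary hypothesis $\vec{w}^{t}\in\mathcal{A}_{\vec{p}(\cdot)/t}$ becomes transparent: it absorbs exactly the logarithmic loss produced by the commutator via the $L(\log L)$ maximal operator by providing the small amount of extra integrability needed to feed into the multilinear Muckenhoupt theory.
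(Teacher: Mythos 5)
Your overall strategy --- the sharp-maximal pointwise bound for the commutator, a Fefferman--Stein inequality on $L^{p(\cdot)}(\nu_{\vec w})$, the scalar maximal operator for the term $M_{\varepsilon}(T(\vec f))$, and the domination $\mathcal{M}_{\mathrm{L(log L)}}(\vec f)\lesssim \mathcal{M}(|f_1|^{t},\dots,|f_m|^{t})^{1/t}$ combined with $\vec w^{t}\in\mathcal{A}_{\vec p(\cdot)/t}$ for the $\mathrm{L(\log L)}$ term --- is exactly the route the paper takes (the paper states the pointwise estimate from \cite{LuZhang2014} and defers the remaining details to the proof of \cite[Theorem 2.4]{CHSW2025}).

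There is, however, one step in your proposal that would fail as written: the claim that the term $M_{\varepsilon}(T(\vec f))$ can be handled ``for $\varepsilon$ sufficiently close to $1$''. First, the hypotheses only give $p_{-}>t/m$, so $p_{-}$ may be smaller than $1$; then $p(\cdot)/\varepsilon\notin\mathscr{P}$ for $\varepsilon$ near $1$ and the Cruz-Uribe--Fiorenza--Neugebauer theorem does not apply to $L^{p(\cdot)/\varepsilon}(\nu_{\vec w}^{\varepsilon})$. Second, and more fundamentally, the weight information extracted from $\vec w\in\mathcal{A}_{\vec p(\cdot)}$ via Lemma~\ref{lem:Apqrs char.} is $\nu_{\vec w}\in\mathcal{A}_{(p(\cdot),p(\cdot)),(1/m,\infty)}$, which by Lemma~\ref{lem:rescale} is equivalent to $\nu_{\vec w}^{1/m}\in\mathcal{A}_{mp(\cdot)}$; this matches precisely the exponent $\varepsilon=1/m$ and does \emph{not} yield $\nu_{\vec w}^{\varepsilon}\in\mathcal{A}_{p(\cdot)/\varepsilon}$ for any $\varepsilon>1/m$, since the classes $\mathcal{A}_{(p(\cdot),p(\cdot)),(\varepsilon,\infty)}$ shrink as $\varepsilon$ increases. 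The correct move --- and the one the paper makes --- goes in the opposite direction: use Jensen's inequality to replace $M_{\varepsilon}$ (with $\varepsilon<1/m$, which is the range in which the paper quotes the pointwise estimate) by $M_{1/m}$, and then apply the scalar maximal theorem on $L^{mp(\cdot)}(\nu_{\vec w}^{1/m})$, which is legitimate because $(mp)_{-}>t>1$ and $\nu_{\vec w}^{1/m}\in\mathcal{A}_{mp(\cdot)}$. With this correction, and granting the Fefferman--Stein inequality (which you rightly identify as the main technical ingredient and which the paper also takes from \cite{CHSW2025} rather than reproving), your argument coincides with the paper's.
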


\begin{proof}
The proof is similar to \cite[Theorem 2.4]{CHSW2025}. Here, we provide the necessary changes. In particular, let $0<\delta<\varepsilon<1/m$. Then, by \cite[page 95 and Theorem 7.1]{LuZhang2014} we have the pointwise estimate
\begin{align*}
    M^{\#}_{\delta}([T,b]_{e_j}(\vec{f}))(x)&\lesssim\Vert b\Vert_{\mathrm{BMO}(\R^n)}(M_{\varepsilon}(T(\vec{f}))(x)+\mathcal{M}_{\mathrm{L(log(L))}}(\vec{f})(x))\\
    &\lesssim\Vert b\Vert_{\mathrm{BMO}(\R^n)}(M_{\frac{1}{m}}(T(\vec{f}))(x)+\mathcal{M}_{\mathrm{L(log(L))}}(\vec{f})(x)),
\end{align*}
where the last inequality follows by using Jensen's inequality. 
The rest of the proof proceeds in a similar fashion as in \cite[Theorem 2.4]{CHSW2025} with the difference that instead of \cite[Theorem 2.3]{CHSW2025} we apply Theorem \ref{thm:bound_bil_CZO_1}; we skip the details.    
\end{proof}

Now, we are in position to apply our abstract main result Theorem~\ref{thm:main_result} which gives the following.

\begin{theorem}\label{thm:main_result_CZO}
Let $\vec{b}=(b_1,b_2)$ be a pair of functions in $\mathrm{CMO}(\R^n)$ and $t$ be a fixed constant with $t>1$. Then, for all proper $2$-admissible quadruples $(\vec{p}(\cdot),\vec{p}(\cdot),\vec{1},\infty)$ with
\begin{equation*}
    t<\min\{(p_j)_{-}:~j=1,2\},
\end{equation*}
and for all pairs of weights $\vec{w}$ with $\vec{w}\in\mathcal{A}_{\vec{p}(\cdot)}$ and $\vec{w}^{t}\in\mathcal{A}_{\frac{\vec{p}(\cdot)}{t}}$, it holds that $[T,b_1]_{e_1}$, $[T,b_2]_{e_2}$ and $[T,\vec{b}]_{\Sigma}$ map $L^{p_1(\cdot)}(w_1)\times L^{p_2(\cdot)}(w_2)$ into $L^{p(\cdot)}(\nu_{\vec{w}})$ compactly.
\end{theorem}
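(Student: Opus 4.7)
The plan is to apply Theorem~\ref{thm:main_result} with $\gamma = 0$ separately to each of the commutators $[T, b_1]_{e_1}$ and $[T, b_2]_{e_2}$, and then deduce the case of the sum commutator $[T, \vec{b}]_\Sigma = [T, b_1]_{e_1} + [T, b_2]_{e_2}$ from the fact that a sum of two compact bilinear operators is compact. So fix $j \in \{1,2\}$ and verify the two hypotheses of Theorem~\ref{thm:main_result} for the operator $[T, b_j]_{e_j}$.

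For hypothesis~\eqref{eq:main1}, since $b_j \in \mathrm{CMO}(\R^n) \subset \mathrm{BMO}(\R^n)$, I invoke Theorem~\ref{thm:bound_bil_CZO_2} (applied with $m = 2$): for any proper $2$-admissible quadruple $(\vec{p}_0(\cdot), q_0(\cdot), \vec{1}, \infty)$ with $q_0(\cdot) = p_0(\cdot)$, $(q_0)_- > 1$, $t < \min\{(p_{0,k})_-: k = 1,2\}$, and any $\vec{w}_0 \in \mathcal{A}_{\vec{p}_0(\cdot), q_0(\cdot)}$ with $\vec{w}_0^t \in \mathcal{A}_{\vec{p}_0(\cdot)/t, q_0(\cdot)/t}$, that theorem directly delivers the required boundedness. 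For hypothesis~\eqref{eq:main2}, I need to produce a single initial configuration at which compactness is known: pick constant exponents $p_{1,1} = p_{1,2} = 3t$, set $q_1 = p_1$ via $\tfrac{1}{p_1} = \tfrac{1}{p_{1,1}} + \tfrac{1}{p_{1,2}}$ (so $p_1 = 3t/2 > 1$), and take $\vec{w}_1 = \vec{1}$. Since the exponents are constants, log-H\"older continuity is trivial, and $(q_1)_- = p_1 > 1$, $t < p_{1,k}$ are immediate, so the quadruple $(\vec{p}_1, q_1, \vec{1}, \infty)$ is proper $2$-admissible. A direct computation of the $\mathcal{A}_{\vec{p}_1, q_1}$ characteristic for constant exponents and unit weight shows that the exponents of $|Q|$ cancel exactly, so $\vec{1} \in \mathcal{A}_{\vec{p}_1, q_1}$; the rescaled condition $\vec{1}^t = \vec{1} \in \mathcal{A}_{\vec{p}_1/t, q_1/t}$ is then immediate from Lemma~\ref{lem:rescale}. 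The required compactness of $[T, b_j]_{e_j}: L^{p_{1,1}}(\R^n) \times L^{p_{1,2}}(\R^n) \to L^{q_1}(\R^n)$ is precisely the content of the unweighted result Theorem~\ref{thm:comp_bil_CZO}, since $b_j \in \mathrm{CMO}(\R^n)$.

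With both hypotheses verified, Theorem~\ref{thm:main_result} yields compactness of each $[T, b_j]_{e_j}$, $j = 1, 2$, on the entire scale of weighted variable Lebesgue spaces described in the statement, and the sum commutator case then follows by linearity (together with the standard fact that the sum of two compact bilinear operators is compact). No serious analytic obstacle arises in this argument: the heavy lifting has already been done by Theorem~\ref{thm:bound_bil_CZO_2}, Theorem~\ref{thm:comp_bil_CZO}, and the abstract extrapolation Theorem~\ref{thm:main_result}; the role of the present proof is essentially to organize these ingredients and to make a concrete choice of initial endpoint (constant exponents and unit weight) at which unweighted compactness of the commutator is available. If there is any point that requires a moment of care, it is checking that the chosen initial constant exponents simultaneously satisfy $(q_1)_- > 1$, $(p_{1,k})_- > t$, and the weight membership conditions for $\vec{1}$; all of this is arranged by the choice $p_{1,1} = p_{1,2} = 3t$.
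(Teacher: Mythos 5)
Your proof is correct and follows essentially the same route as the paper: verify hypothesis \eqref{eq:main1} of Theorem~\ref{thm:main_result} via Theorem~\ref{thm:bound_bil_CZO_2}, verify hypothesis \eqref{eq:main2} at a constant-exponent, unit-weight endpoint using Theorem~\ref{thm:comp_bil_CZO}, and handle $[T,\vec{b}]_{\Sigma}$ by summing compact bilinear operators; the paper picks $p_{1,1}=p_{1,2}=x_1>\max\{t,2\}$ where you pick $3t$, and both choices work. One cosmetic remark: Lemma~\ref{lem:rescale} with $a=1/t$ lands in $\mathcal{A}_{(\vec{p}_1/t,q_1/t),(\vec{1}/t,\infty)}$ rather than $\mathcal{A}_{\vec{p}_1(\cdot)/t,q_1(\cdot)/t}$, so it is cleaner to just compute the characteristic of the unit weight directly (the powers of $|Q|$ cancel exactly as in your first computation).
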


\begin{proof}
The first step is to apply Theorem~\ref{thm:main_result} for the operators $[T,b_1]_{e_1}$ and $[T,b_2]_{e_2}$, noticing that $\gamma=0$.
    
Part \eqref{eq:main1} of the assumptions of Theorem~\ref{thm:main_result} for $[T,b_1]_{e_1}$ and $[T,b_2]_{e_2}$ follows from the conclusion of Theorem~\ref{thm:bound_bil_CZO_2} in the bilinear case (with an obvious change of notation).
    
Choose now a constant
\begin{equation*}
    x_1>\max\{t,2\}
\end{equation*}
Such a choice is always possible. Then, Theorem~\ref{thm:comp_bil_CZO} implies that part \eqref{eq:main2} of the assumptions of Theorem~\ref{thm:main_result} is satisfied with the particular choices $\vec{p}_1(\cdot):=(x_1,x_1)$, $p_1(\cdot):=\frac{x_1}{2}$ and $\vec{w}_1:=(1,1)$ for each of the operators $[T,b_1]_{e_1}$ and $[T,b_2]_{e_2}$.

Hence, by Theorem~\ref{thm:main_result} we deduce the desired conclusion for each of the operators $[T,b_1]_{e_1}$ and $[T,b_2]_{e_2}$. This implies immediately the desired conclusion also for $[T,\vec{b}]_{\Sigma}$, since the sum of bilinear compact operators remains compact due to \cite[Proposition 2]{BenTor2013}.
\end{proof}

We close this subsection by mentioning  
that there are several examples of the bilinear $\omega$-Calder\'on--Zygmund operators (see \cite[Subsection 5.1]{CaoOlivoYabuta2022}) which satisfy the assumptions and conclusion of Theorem \ref{thm:main_result_CZO}.

\subsection{Bilinear fractional Calder\'on--Zygmund operators}\label{subsec:fractional}

Let $m\geq2$ and $\alpha\in(0,mn)$. A function $K_{\alpha}(x,y_1,\ldots,y_m)$ on $(\R^n)^{m+1}$ defined away from the diagonal $x=y_1=\ldots=y_m$ is said to be a \emph{$m$-linear $\alpha$-fractional Calder\'{o}n--Zygmund kernel} if there exist constants $A,B>0$, such that the following \emph{size estimate} is satisfied:
\begin{equation*}
    |K_{\alpha}(x,y_1,\ldots,y_m)| \leq \frac{A}{(|x-y_1|+\cdots+|x-y_m|)^{mn-\alpha}},
\end{equation*}
as well as the following \emph{smoothness estimates} hold:
\begin{itemize}
\item It holds
\begin{equation*}
    |K_{\alpha}(x,y_1,\ldots,y_m)-K_{\alpha}(x',y_1,\ldots,y_m)| \leq \frac{B|x-x'|}{(|x-y_1|+\cdots+|x-y_m|)^{mn-\alpha+1}},
\end{equation*}
whenever $|x-x'|\leq\frac{1}{2}\max_{j=1,\ldots,m}|x-y_j|$.

\item For each $j=1,\ldots,m$, one has
\begin{equation*}
    | K_{\alpha}(x,y_1,\ldots,y_j,\ldots,y_m)-K_{\alpha}(x,y_1,\ldots,y_j',\ldots,y_m)| \leq \frac{B|y_j-y_j'|}{(|x-y_1|+\cdots+|x-y_m|)^{mn-\alpha+1}},
\end{equation*}
whenever $|y_j-y_j'|\leq\frac{1}{2}\max_{k=1,\ldots,m}|x-y_k|$.
\end{itemize}

As noted in \cite{BDMT2015} in the special case $m=2$, given such a kernel $K_{\alpha}$, the $m$-linear operator $T_{\alpha}$ defined by
\begin{equation*}
     T_{\alpha}(\vec{f})(x) := \int_{(\R^n)^m}K_{\alpha}(x,y_1,\ldots,y_m)f_1(y_1)\cdots f_m(y_m)\,\mathrm{d}y_1\cdots\mathrm{d}y_m,\quad x\in\R^n
\end{equation*}
is well defined whenever $f_j$, $j=1,\ldots,m$ are for example bounded functions with compact support on $\R^n$.

As observed in \cite[page 3]{BDMT2015} in the special case $m = 2$ (see also \cite[Theorem 5.8]{CaoOlivoYabuta2022}), an example of such an operator is the \emph{$m$-linear fractional integral operator} $\mathcal{I}_{\alpha}$, also known as the \emph{$m$-linear Riesz potential}, associated to the kernel
\begin{equation*}
    K_{\alpha}(x,y_1,\ldots,y_m) := \frac{1}{(|x-y_1|+\cdots+|x-y_m|)^{mn-\alpha}}.
\end{equation*}
In fact, for every such operator $T_{\alpha}$ one has by the size estimate of the kernel
\begin{equation}\label{trivial_bound}
    |T_{\alpha}(\vec{f})(x)| \leq A\,\mathcal{I}_{\alpha}(|f_1|,\ldots,|f_m|)(x).
\end{equation}
It was shown in \cite{KenigStein1999} that $\mathcal{I}_{\alpha}$ extends to a bounded $m$-linear operator $L^{p_1}(\R^n)\times\cdots\times L^{p_m}(\R^n)\to L^{q}(\R^n)$ whenever $1<p_1,\ldots,p_m<\infty$, $0<q<\infty$ and $\frac{\alpha}{n}=\frac{1}{p}-\frac{1}{q}$, where $\frac{1}{p}:=\sum_{j=1}^{m}\frac{1}{p_j}$. Thus, the same holds for arbitrary $T_\alpha$.

A result on the compactness of commutators of $T_{\alpha}$ on unweighted Lebesgue spaces in the bilinear case was first established in \cite[Theorem 2.1]{BDMT2015}. Subsequently, \cite[Theorem 5.8]{CaoOlivoYabuta2022} extended \cite[Theorem 2.1]{BDMT2015} to the general $m$-linear setting and (constant exponent) weighted Lebesgue spaces and explicitly allowed the target space to be only a quasi-Banach space. Let us note that while \cite[Theorem 5.8]{CaoOlivoYabuta2022} is formally stated only for $\mathcal{I}_{\alpha}$, the authors of \cite{CaoOlivoYabuta2022} observe that the statement and their proof hold for arbitrary  $T_{\alpha}$. Here we only need the unweighted version of \cite[Theorem 5.8]{CaoOlivoYabuta2022}.

\begin{theorem}[\cite{BDMT2015, CaoOlivoYabuta2022}]\label{thm:comp_bil_frac}
Let $b\in\mathrm{CMO}(\R^n)$. Let $1<p_1,\ldots,p_m<\infty$ and let $0<q<\infty$ such that $\frac{1}{p}-\frac{1}{q}=\frac{\alpha}{n}$, where $\frac{1}{p}:=\sum_{j=1}^{m}\frac{1}{p_j}$. Then, $[T_{\alpha},b]_{e_j}$ maps $L^{p_1}(\R^n)\times\cdots\times L^{p_m}(\R^n)$ into $L^{q}(\R^n)$ compactly for every $j=1,\ldots,m$.
\end{theorem}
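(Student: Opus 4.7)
The plan is to reduce, via a density argument, to the case $b\in C_c^\infty(\R^n)$, and then to verify compactness by means of the Kolmogorov--Riesz--Fr\'{e}chet characterization of precompact subsets of $L^q(\R^n)$.

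First I would exploit that $\mathrm{CMO}(\R^n)$ is by definition the $\mathrm{BMO}$-closure of $C_c^\infty(\R^n)$, and pick a sequence $(b_k)_{k\geq1}\subset C_c^\infty(\R^n)$ with $\|b_k-b\|_{\mathrm{BMO}}\to 0$. A standard sharp maximal function argument, analogous to the one outlined in the proof of Theorem~\ref{thm:bound_bil_CZO_2} but carried out in the unweighted context and for $T_\alpha$ in place of $T$, yields the a priori bound
\begin{equation*}
    \bigl\|[T_\alpha,g]_{e_j}\bigr\|_{L^{p_1}\times\cdots\times L^{p_m}\to L^q}\lesssim\|g\|_{\mathrm{BMO}},\qquad g\in\mathrm{BMO}(\R^n).
\end{equation*}
By linearity of the commutator in its symbol this gives $[T_\alpha,b_k]_{e_j}\to[T_\alpha,b]_{e_j}$ in the uniform operator topology, and since the space of compact bilinear operators is norm-closed inside the space of bounded bilinear operators (\cite[Proposition 2]{BenTor2013}), it suffices to establish the claim when $b\in C_c^\infty(\R^n)$, with $\supp b\subset B(0,R)$.

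Denote by $\mathcal{F}$ the image of the closed unit ball of $L^{p_1}\times\cdots\times L^{p_m}$ under $[T_\alpha,b]_{e_j}$. The Kolmogorov--Riesz--Fr\'{e}chet theorem reduces precompactness of $\mathcal{F}$ in $L^q(\R^n)$ to checking uniform $L^q$-boundedness, uniform decay at infinity, and equicontinuity of translations. The first is immediate from the operator bound above. For the second I would use the pointwise identity
\begin{equation*}
    [T_\alpha,b]_{e_j}(\vec{f})(x)=\int_{(\R^n)^m}K_\alpha(x,\vec{y})\bigl(b(x)-b(y_j)\bigr)\prod_{i=1}^m f_i(y_i)\dd\vec{y},
\end{equation*}
noting that the factor $b(x)-b(y_j)$ vanishes whenever both $|x|$ and $|y_j|$ exceed $R$, so that the size bound on $K_\alpha$ combined with \eqref{trivial_bound} and a Hardy--Littlewood--Sobolev type estimate yields a pointwise decay $|[T_\alpha,b]_{e_j}(\vec{f})(x)|\lesssim|x|^{-\eta}$ for $|x|\gg R$, uniformly in $\vec{f}$ from the unit ball.

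The main obstacle will be the equicontinuity of translations. Forming $[T_\alpha,b]_{e_j}(\vec{f})(x+h)-[T_\alpha,b]_{e_j}(\vec{f})(x)$ and adding and subtracting $b(x+h)T_\alpha(\vec{f})(x)$, one arrives at two integrals: one with the kernel increment $K_\alpha(x+h,\vec{y})-K_\alpha(x,\vec{y})$ against $b(x+h)-b(y_j)$, and one with $K_\alpha(x,\vec{y})$ against $b(x+h)-b(x)$. The latter is controlled by the Lipschitz continuity of $b$ together with the Kenig--Stein bound for $\mathcal{I}_\alpha$. The former I would split according to whether $\max_i|x-y_i|\leq 2|h|$, in which case one falls back on the size bound for $K_\alpha$ and the boundedness of $b$, or $\max_i|x-y_i|>2|h|$, in which case one invokes the H\"{o}lder-type smoothness estimate for $K_\alpha$ in the first variable. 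Hardy--Littlewood--Sobolev type estimates on the resulting fractional convolutions should then yield $o(1)$ as $|h|\to 0$, uniformly over the unit ball. This is the technically delicate step, since near the diagonal the smoothness estimate for $K_\alpha$ degenerates and one must carefully exploit the compact support and Lipschitz continuity of $b$ to compensate.
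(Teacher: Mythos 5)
This statement is not proved in the paper at all: it is imported verbatim from the cited references (Theorem 2.1 of B\'enyi--Dami\'an--Moen--Torres for the bilinear case and Theorem 5.8 of Cao--Olivo--Yabuta for the general $m$-linear, weighted case), so there is no in-paper argument to compare against. Your outline is essentially the proof given in those references: reduce to $b\in C_c^\infty$ by the commutator boundedness estimate $\|[T_\alpha,b]_{e_j}\|\lesssim\|b\|_{\BMO}$ plus closedness of compact operators under operator-norm limits, then verify the Fr\'echet--Kolmogorov criterion, with the decay at infinity coming from the support of $b$ and the equicontinuity from the kernel smoothness away from the diagonal and the Lipschitz bound on $b$ near it. Two points deserve more care than you give them. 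First, the target exponent $q$ may lie in $(0,1]$, so $L^q$ is only a quasi-Banach space; both the assertion that compact bilinear operators are closed under operator-norm limits (your appeal to Proposition 2 of B\'enyi--Torres, which is stated for normed spaces) and the Fr\'echet--Kolmogorov characterization must be invoked in their quasi-Banach versions --- this is exactly the point the paper highlights when it notes that Cao--Olivo--Yabuta ``explicitly allowed the target space to be only a quasi-Banach space.'' Second, in the decay step the pointwise bound $|x|^{-\eta}$ is only useful if one checks $\eta>n/q$, which requires keeping the gain $R^{n/p_j'}$ from integrating $f_j$ over $\supp b$ rather than estimating all variables symmetrically; as literally stated, a bare Hardy--Littlewood--Sobolev bound would only give the borderline power $|x|^{-n/q}$, which is not $q$-integrable at infinity. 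Both issues are repairable by the standard arguments, so the proposal is a correct reconstruction of the cited proof rather than a new route.
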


Our goal consists in generalizing Theorem~\ref{thm:comp_bil_frac} to weighted variable exponent Lebesgue spaces in the bilinear case. To achieve that, we first need a boundedness result over such spaces. For the special case of the $m$-Riesz potential, this is just \cite[Theorem 2.4]{CHSW2025}. Our proof of the general case is only a slight adaptation of \cite[Theorem 2.4]{CHSW2025}.

\begin{theorem}\label{thm:bound_bil_frac}
Let $b\in\mathrm{BMO}(\R^n)$. Set $\gamma:=\frac{\alpha}{n}$. Let $t$ be a fixed constant with $1<t<\frac{m}{\gamma}$. Let $j\in\{1,\ldots,m\}$. Then, for all proper $2$-admissible quadruples $(\vec{p}(\cdot), q(\cdot),\vec{1},\infty)$ with
\begin{equation*}
    t<\min\{(p_k)_{-}:~k=1,\ldots,m\}
\end{equation*}
and
\begin{equation*}
    \frac{1}{p(\cdot)}-\frac{1}{q(\cdot)}=\gamma,
\end{equation*}
and for all $m$-tuples of weights $\vec{w}$ with $\vec{w}\in\mathcal{A}_{\vec{p}(\cdot),q(\cdot)}$ and $\vec{w}^{t}\in\mathcal{A}_{\frac{\vec{p}(\cdot)}{t},\frac{q(\cdot)}{t}}$, it holds that $[T_{\alpha},b]_{e_j}$ maps $L^{p_1(\cdot)}(w_1)\times\cdots\times L^{p_m(\cdot)}(w_m)$ into $L^{q(\cdot)}(\nu_{\vec{w}})$ boundedly.
\end{theorem}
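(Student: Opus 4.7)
The plan is to follow the strategy of \cite[Theorem 2.4]{CHSW2025} which treats the case $T_\alpha=\mathcal{I}_\alpha$, and to modify it where the use of the Riesz potential is essential. The first step is to exploit the pointwise bound \eqref{trivial_bound} so that the required boundedness of $T_\alpha$ itself on weighted variable exponent spaces can be inherited from the corresponding boundedness of $\mathcal{I}_\alpha$ established in \cite{CHSW2025}: this already handles the non-commutator version and, crucially, is what we need to estimate the sharp maximal function of the commutator later.

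The second step is a pointwise sharp maximal function estimate: fix $0<\delta<\varepsilon<1/m$ sufficiently small (in particular, with $\varepsilon$ smaller than $1/t$ times the relevant infima so that the subsequent rescalings make sense), and establish
\begin{equation*}
M^{\#}_\delta\bigl([T_\alpha,b]_{e_j}(\vec f)\bigr)(x)\lesssim \Norm{b}{\BMO}\Bigl(M_\varepsilon\bigl(T_\alpha(\vec f)\bigr)(x)+\mathcal{M}_{\alpha,\mathrm{L(log L)}}(\vec f)(x)\Bigr).
\end{equation*}
This is the analogue for fractional kernels of the estimate used in the proofs of Theorems \ref{thm:bound_bil_CZO_2} and \cite[Theorem 2.4]{CHSW2025}. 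It is obtained by the standard splitting of the commutator into a "local" piece, which is handled by Kolmogorov's inequality and the weak type $(1,\tfrac{n}{mn-\alpha})$ endpoint for $T_\alpha$ (valid via \eqref{trivial_bound} and the known endpoint for $\mathcal{I}_\alpha$), and a "remote" piece, which is handled using the size and smoothness estimates of the fractional kernel in tandem with the John--Nirenberg inequality for $b$; this is exactly where the generalized fractional Orlicz maximal operator $\mathcal{M}_{\alpha,\mathrm{L(log L)}}$ appears.

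The third step is a Fefferman--Stein type inequality in weighted variable Lebesgue spaces of the form
\begin{equation*}
\Norm{[T_\alpha,b]_{e_j}(\vec f)}{L^{q(\cdot)}(\nu_{\vec w})}\lesssim \Norm{M^{\#}_\delta\bigl([T_\alpha,b]_{e_j}(\vec f)\bigr)}{L^{q(\cdot)}(\nu_{\vec w})}.
\end{equation*}
This is where the hypothesis $\vec w^{t}\in \mathcal{A}_{\vec p(\cdot)/t,q(\cdot)/t}$ is used: after the homogeneity rescaling of Lemma \ref{lem:homog} and Lemma \ref{lem:rescale}, the estimate reduces to the Banach-range version of Fefferman--Stein for the weighted variable Lebesgue space $L^{q(\cdot)/t}(\nu_{\vec w}^{t})$, which holds because the Hardy--Littlewood maximal operator is bounded on this space (a consequence of $\nu_{\vec w}^{t}\in\mathcal{A}_{q(\cdot)/t}$, itself obtained from $\vec w^{t}\in\mathcal{A}_{\vec p(\cdot)/t,q(\cdot)/t}$ via Lemma \ref{lem:Apqrs char.}). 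The condition $t<\tfrac{m}{\gamma}$ ensures that $q(\cdot)/t$ still satisfies $(q/t)_{-}>1$.

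Finally, we combine the three steps. The term involving $M_\varepsilon(T_\alpha(\vec f))$ is controlled by the boundedness of $M_\varepsilon$ on $L^{q(\cdot)}(\nu_{\vec w})$ followed by the weighted bound for $T_\alpha$ inherited from $\mathcal{I}_\alpha$ via \eqref{trivial_bound}. The term involving $\mathcal{M}_{\alpha,\mathrm{L(log L)}}(\vec f)$ is controlled by a multilinear Orlicz fractional maximal bound on weighted variable Lebesgue spaces, which again follows from the $\mathcal{A}_{\vec p(\cdot),q(\cdot)}$ characterization proven in \cite{CHSW2025} together with a factorization through the pointwise domination $\mathcal{M}_{\alpha,\mathrm{L(log L)}}(\vec f)\lesssim \prod_{k=1}^{m} M_{r_k}f_k$ for suitable $r_k>1$. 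The main obstacle is obtaining the pointwise sharp maximal inequality of step two with the correct $M_\varepsilon(T_\alpha(\vec f))$ dominant, since unlike the positive kernel case of $\mathcal{I}_\alpha$, one must use the general kernel smoothness and the weak-type endpoint to control the local contribution of $T_\alpha$.
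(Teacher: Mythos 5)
Your strategy is the paper's own: transfer the boundedness of $T_\alpha$ from $\mathcal{I}_\alpha$ through the pointwise bound \eqref{trivial_bound} and \cite[Theorem 2.3]{CHSW2025}, establish the sharp maximal pointwise estimate
\begin{equation*}
M^{\#}_{\delta}\bigl([T_{\alpha},b]_{e_j}(\vec{f})\bigr)\lesssim \Norm{b}{\BMO(\R^n)}\bigl(M_{\varepsilon}(T_{\alpha}(\vec{f}))+\mathcal{M}_{\alpha,\mathrm{L(log L)}}(\vec{f})\bigr),
\end{equation*}
and then run the Fefferman--Stein scheme of \cite[Theorem 2.4]{CHSW2025}. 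The only structural difference in the first two steps is that the paper simply cites \cite[Lemma 2.24]{WZ2023} for this pointwise estimate (valid for $0<\delta<\varepsilon<\tfrac{n}{mn-\alpha}$; the paper then takes $\varepsilon=\tfrac1m$), whereas you propose to rederive it via the local/remote splitting; that is legitimate, and your range $\varepsilon<\tfrac1m$ sits inside the admissible one.

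The genuine problem is in your final step. The claimed pointwise domination $\mathcal{M}_{\alpha,\mathrm{L(log L)}}(\vec f)\lesssim\prod_{k=1}^{m}M_{r_k}f_k$ cannot close the argument: the left-hand side carries the fractional factor $|Q|^{\alpha/n}$ while the right-hand side has none. Testing on $f_k=\chi_{B(0,1)}$ and letting $|x|\to\infty$ shows such a domination forces $\sum_k 1/r_k\le m-\gamma$, and even then the product $\prod_k M_{r_k}f_k$ maps $\prod_k L^{p_k(\cdot)}(w_k)$ into the diagonal space $L^{p(\cdot)}(\nu_{\vec w})$ by H\"older (the non-fractional maximal operators do not improve integrability), not into the off-diagonal target $L^{q(\cdot)}(\nu_{\vec w})$ with $\tfrac{1}{q(\cdot)}=\tfrac{1}{p(\cdot)}-\gamma$. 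The fractional gain must be retained: the standard estimate $\Vert f\Vert_{\mathrm{L(log L)},Q}\lesssim_{t}\langle|f|^{t}\rangle_{Q}^{1/t}$ for $t>1$ gives
\begin{equation*}
\mathcal{M}_{\alpha,\mathrm{L(log L)}}(\vec f)\lesssim \mathcal{M}_{t\alpha}\bigl(|f_1|^{t},\ldots,|f_m|^{t}\bigr)^{1/t},
\end{equation*}
after which one invokes the weighted variable-exponent bound for the multilinear \emph{fractional} maximal operator $\mathcal{M}_{t\alpha}:\prod_k L^{p_k(\cdot)/t}(w_k^{t})\to L^{q(\cdot)/t}(\nu_{\vec w}^{t})$; this is exactly what the hypothesis $\vec w^{t}\in\mathcal{A}_{\frac{\vec p(\cdot)}{t},\frac{q(\cdot)}{t}}$ combined with \cite[Theorem 2.2]{CHSW2025} supplies, and it is also where $t<\tfrac{m}{\gamma}$ enters (it guarantees $t\alpha<mn$, so the rescaled quadruple is admissible). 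Your assertion that $t<\tfrac{m}{\gamma}$ ensures $(q/t)_{-}>1$ does not follow from the hypotheses (take $m=2$, $p_1=p_2$ close to $t$ and $\gamma$ small) and is not the role this condition plays.
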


\begin{proof}
The proof is almost identical to \cite[Theorem 2.4]{CHSW2025}, so we explain only the necessary changes.
    
First of all, by \cite[Theorem 2.3]{CHSW2025} we have that the $m$-linear Riesz potential $\mathcal{I}_{\alpha}$ maps $L^{p_1(\cdot)}(w_1)\times\cdots\times L^{p_m(\cdot)}(w_m)$ into $L^{q(\cdot)}(\nu_{\vec{w}})$ boundedly. Thus, by \eqref{trivial_bound} we have that the same holds for arbitrary $T_{\alpha}$.
    
Let now $0<\delta<\varepsilon<\frac{n}{mn-\alpha}$. Then, by \cite[Lemma 2.24]{WZ2023} we have the pointwise estimate
\begin{equation*}
    M^{\#}_{\delta}([T_{\alpha},b]_{e_j}(\vec{f}))(x)\leq C\Vert b\Vert_{\mathrm{BMO}(\R^n)}(M_{\varepsilon}(T_{\alpha}(\vec{f}))(x)+\mathcal{M}_{\alpha,\mathrm{L(log(L))}}(\vec{f})(x)),
\end{equation*}
where the constant $C$ depends only on $n, m$, the constants of $T_{\alpha}$, $\varepsilon$ and $\delta$. We now pick $\varepsilon := \frac{1}{m}$, which is clearly a permitted choice. The rest of the proof proceeds exactly as in \cite[Theorem 2.4]{CHSW2025}; we omit the details.
\end{proof}

Applying now our abstract main result, Theorem~\ref{thm:main_result}, we deduce the following.

\begin{theorem}
Let $\vec{b}=(b_1,b_2)$ be a pair of functions in $\mathrm{CMO}(\R^n)$. Set $\gamma:=\frac{\alpha}{n}$. Let $t$ be a fixed constant with $1<t<\frac{2}{\gamma}$. Then, for all proper $2$-admissible quadruples $(\vec{p}(\cdot),q(\cdot),\vec{1},\infty)$ with
\begin{equation*}
    q_{-} > 1,
\end{equation*}
\begin{equation*}
    t<\min\{(p_j)_{-}:~j=1,2\}
\end{equation*}
and
\begin{equation*}
    \frac{1}{p(\cdot)}-\frac{1}{q(\cdot)}=\gamma
\end{equation*}
and for all pairs of weights $\vec{w}$ with $\vec{w}\in\mathcal{A}_{\vec{p}(\cdot),q(\cdot)}$ and $\vec{w}^{t}\in\mathcal{A}_{\frac{\vec{p}(\cdot)}{t},\frac{q(\cdot)}{t}}$, it holds that $[T_{\alpha},b_1]_{e_1}$, $[T_{\alpha},b_2]_{e_2}$ and $[T_{\alpha},\vec{b}]_{\Sigma}$ map $L^{p_1(\cdot)}(w_1)\times L^{p_2(\cdot)}(w_2)$ into $L^{q(\cdot)}(\nu_{\vec{w}})$ compactly.
\end{theorem}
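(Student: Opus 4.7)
The plan is to apply the abstract extrapolation Theorem~\ref{thm:main_result} separately to each of the operators $T := [T_\alpha, b_j]_{e_j}$ for $j = 1, 2$, with $\gamma := \alpha/n$. Once compactness of both coordinate commutators is established, the compactness of the sum commutator $[T_\alpha, \vec{b}]_{\Sigma}$ follows immediately from \cite[Proposition 2]{BenTor2013}, since a sum of compact bilinear operators is compact. It therefore suffices to verify the two hypotheses of Theorem~\ref{thm:main_result} in the present situation.

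For hypothesis~(1), I would invoke Theorem~\ref{thm:bound_bil_frac} directly: since $b_j \in \mathrm{CMO}(\R^n) \subseteq \mathrm{BMO}(\R^n)$, the weighted variable exponent boundedness of $[T_\alpha, b_j]_{e_j}$ furnished by Theorem~\ref{thm:bound_bil_frac} is exactly the quantified statement required, namely that for every proper $2$-admissible quadruple $(\vec{p}_0(\cdot), q_0(\cdot), \vec{1}, \infty)$ with $(q_0)_{-} > 1$, $t < \min_k (p_{0,k})_{-}$, $1/p_0(\cdot) - 1/q_0(\cdot) = \gamma$, and every weight vector $\vec{w}_0$ satisfying both $\vec{w}_0 \in \mathcal{A}_{\vec{p}_0(\cdot), q_0(\cdot)}$ and $\vec{w}_0^{t} \in \mathcal{A}_{\vec{p}_0(\cdot)/t, q_0(\cdot)/t}$, the operator $[T_\alpha, b_j]_{e_j}$ maps $L^{p_{0,1}(\cdot)}(w_{0,1}) \times L^{p_{0,2}(\cdot)}(w_{0,2})$ into $L^{q_0(\cdot)}(\nu_{\vec{w}_0})$ boundedly.

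For hypothesis~(2), I would produce a single base point formed from constant exponents and trivial weights. Using the hypothesis $1 < t < 2/\gamma$ together with the trivial inequality $2/(1+\gamma) < 2/\gamma$, I pick a constant $x_1 \in (\max\{t, 2/(1+\gamma)\},\, 2/\gamma)$, set $\vec{p}_1 := (x_1, x_1)$ and define $q_1$ by $1/q_1 := 2/x_1 - \gamma$. Then $(p_{1,j})_{-} = x_1 > t > 1$, $(q_1)_{-} = q_1 > 1$, $q_1 < \infty$ and the $\mathrm{LH}$ condition is trivial for constant exponents, so $(\vec{p}_1, q_1, \vec{1}, \infty)$ is a proper $2$-admissible quadruple with $1/p_1 - 1/q_1 = \gamma$. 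Taking $\vec{w}_1 := (1, 1)$, a direct computation from \eqref{eq:Apqrs constant} shows that the resulting $|Q|$-exponent collapses to zero precisely by the relation $1/p_1 - 1/q_1 = \gamma$, yielding $[\vec{w}_1]_{\mathcal{A}_{\vec{p}_1, q_1}} = 1$; the analogous computation for the $t$-scaled data gives $[\vec{w}_1^{t}]_{\mathcal{A}_{\vec{p}_1/t, q_1/t}} = 1$. Since $\nu_{\vec{w}_1} = 1$, Theorem~\ref{thm:comp_bil_frac} (applied with $p_{1,1} = p_{1,2} = x_1$, target exponent $q_1$, and parameter $\alpha$) supplies the required compactness $[T_\alpha, b_j]_{e_j}: L^{x_1}(\R^n) \times L^{x_1}(\R^n) \to L^{q_1}(\R^n)$. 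The main obstacle is precisely the arithmetic of this base point: one needs a single constant $x_1$ simultaneously satisfying $x_1 > t$, $x_1 > 2/(1+\gamma)$ and $x_1 < 2/\gamma$, and it is exactly the hypothesis $1 < t < 2/\gamma$ that keeps the admissible interval nonempty. Once the base point is fixed, everything else is a direct appeal to Theorem~\ref{thm:main_result}.
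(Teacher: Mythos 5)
Your proposal is correct and follows essentially the same route as the paper's own proof: verify hypothesis (1) of Theorem~\ref{thm:main_result} via Theorem~\ref{thm:bound_bil_frac}, build the unweighted constant-exponent base point with $x_1\in\bigl(\max\{t,\tfrac{2}{1+\gamma}\},\tfrac{2}{\gamma}\bigr)$, $q_1$ determined by $\tfrac{1}{q_1}=\tfrac{2}{x_1}-\gamma$ and $\vec{w}_1=(1,1)$, invoke Theorem~\ref{thm:comp_bil_frac} for hypothesis (2), and conclude for the sum commutator via \cite[Proposition 2]{BenTor2013}. Your additional verification that the $\mathcal{A}_{\vec{p}_1,q_1}$ and $\mathcal{A}_{\vec{p}_1/t,q_1/t}$ characteristics of the trivial weight equal $1$ is a correct elaboration of a step the paper leaves implicit.
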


\begin{proof}
We first apply Theorem~\ref{thm:main_result} for the operators $[T_{\alpha},b_1]_{e_1}$ and $[T_{\alpha},b_2]_{e_2}$, noting that $\gamma<\frac{2}{t}$.
    
Part \eqref{eq:main1} of the assumptions of Theorem~\ref{thm:main_result} for $[T_{\alpha},b_1]_{e_1}$ and $[T_{\alpha},b_2]_{e_2}$ is just the conclusion of Theorem~\ref{thm:bound_bil_frac} in the bilinear case (with different notation).
    
Pick now a constant
\begin{equation*}
    d_1 \in \left(\max\left\{t,\frac{2}{1+\gamma}\right\},\frac{2}{\gamma}\right).
\end{equation*}
Such a choice is clearly possible. Then, Theorem~\ref{thm:comp_bil_frac} yields that part \eqref{eq:main2} of the assumptions of Theorem~\ref{thm:main_result} is satisfied with the particular choices $\vec{p}_1(\cdot):=(d_1,d_1)$, $q_1(\cdot):=\frac{1}{\frac{2}{d_1}-\gamma}$ and $\vec{w}_1:=(1,1)$ for each of the operators $[ T_{\alpha},b_1]_{e_1}$ and $[T_{\alpha},b_2]_{e_2}$.

    Thus, by Theorem~\ref{thm:main_result} we deduce the desired conclusion for each of the operators $[T_{\alpha},b_1]_{e_1}$ and $[T_{\alpha},b_2]_{e_2}$. This yields immediately the desired conclusion also for $[T_{\alpha},\vec{b}]_{\Sigma}$, since the sum of bilinear compact operators remains compact per \cite[Proposition 2]{BenTor2013}.
\end{proof}

\subsection{Bilinear Fourier multipliers}\label{subsec:fourier}

Let $s\in\N$ and $m\geq 2$. We consider a bounded function $\mathfrak{m}\in\mathrm{C}^{s}(\R^{nm}\setminus\{0\})$. Let $\Phi$ be a Schwarz function on $\R^{nm}$ satisfying the conditions
\begin{equation*}
    \mathrm{supp}(\Phi) \subseteq \left\{(\xi_1,\ldots,\xi_{m})\in(\R^{n})^{m}:~\frac{1}{2}\leq\sum_{j=1}^{m}|\xi_j|\leq 2\right\}
\end{equation*}
and
\begin{equation*}
    \sum_{k\in\Z}\Phi(2^{-j}\xi)=1,\quad\forall\; \xi = (\xi_1,\ldots,\xi_m)\in\R^{nm}\setminus\{0\}.
\end{equation*}
Consider the usual Sobolev space $W^{s}(\R^{nm})$ with norm
\begin{equation*}
    \Vert f\Vert_{W^{s}(\R^{nm})} := \left(\int_{\R^{nm}}(1+|\xi|^2)^{s}|\widehat{f}(\xi)|^2\,\mathrm{d}\xi\right)^{1/2},
\end{equation*}
where $\widehat{f}$ denotes the Fourier transform of $f$ on all the variables in $\R^{nm}$. Define
\begin{equation*}
    \mathfrak{m}_{k}(\xi) := \Phi(\xi)\mathfrak{m}(2^{k}\xi),\quad \xi\in\R^{nm}
\end{equation*}
for each $k\in\Z$. We say that $\mathfrak{m}\in\mathcal{W}^{s}(\R^{nm})$ if
\begin{equation*}
    \Vert\mathfrak{m}\Vert_{\mathcal{W}^{s}(\R^{nm})} := \sup_{k\in\Z}\Vert\mathfrak{m}_{k}\Vert_{W^{s}(\R^{nm})}<\infty.
\end{equation*}
Now consider the $m$-linear Fourier multiplier $T_{\mathfrak{m}}$ associated to the symbol $\mathfrak{m}$ defined by
\begin{equation*}
    T_{\mathfrak{m}}(f_1,\ldots,f_m)(x) := \int_{(\R^n)^m}\mathfrak{m}(\xi)e^{2\pi i x\cdot(\xi_1+\cdots+\xi_m)}\widehat{f}_1(\xi_1)\cdots\widehat{f}_m(\xi_m)\,\mathrm{d}\xi,\quad x\in\R^n
\end{equation*}
for Schwarz functions $f_1,\ldots,f_m$ on $\R^n$.

The compactness properties of the commutator $[T_{\mathfrak{m}},b]_{e_j}$, $j=1,\ldots,m$ on (constant exponent) weighted Lebesgue spaces were first considered in the bilinear case in \cite{Hu2017} and later extended to the general $m$-linear case in \cite[Theorem 5.9]{CaoOlivoYabuta2022}. Here we need only the unweighted version of \cite[Theorem 5.9]{CaoOlivoYabuta2022}.

\begin{theorem}[{\cite[Theorem 5.9]{CaoOlivoYabuta2022}}]
\label{thm:comp_FourMult_com_unw}
Assume $\frac{mn}{2}<s\leq mn$ and $\mathfrak{m}\in\mathcal{W}^{s}(\R^{nm})$. Let $1\leq t_1,\ldots, t_m <2$ with
\begin{equation*}
    \frac{s}{n} = \sum_{j=1}^{m}\frac{1}{t_j}.
\end{equation*}
Let $b\in\mathrm{CMO}(\R^{n})$. Then, for all $1<p_1,\ldots,p_m<\infty$ with $t_j<p_j$ for each $j=1,\ldots,m$ and
\begin{equation*}
    \frac{1}{p} := \sum_{j=1}^{m}\frac{1}{p_j},
\end{equation*}
we have that $[T_{\mathfrak{m}},b]_{e_j}$ maps $L^{p_1}(\R^n)\times\cdots\times L^{p_m}(\R^n)$ into $L^{p}(\R^n)$ compactly for each $j=1,\ldots,m$.
\end{theorem}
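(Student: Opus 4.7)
The plan is to prove compactness in three stages: first establish the corresponding boundedness result with operator norm controlled by $\|b\|_{\BMO}$; then use density of $C_c^\infty$ in $\CMO$ to reduce to the case of smooth compactly supported $b$; and finally verify compactness for such $b$ by means of a multilinear Fr\'{e}chet--Kolmogorov--Riesz compactness criterion. For the first stage, I would rely on the available multilinear Fourier multiplier theory: the assumption $\mathfrak{m}\in\mathcal{W}^s(\R^{nm})$ with $s>mn/2$ yields bi/multilinear boundedness of $T_\mathfrak{m}$ on the relevant $L^{p_1}\times\cdots\times L^{p_m}\to L^p$ range via a Coifman--Meyer/Tomita-type argument combined with the Sobolev embedding encoded in $\mathcal{W}^s$. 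Combining this with the standard Coifman--Rochberg--Weiss scheme then yields the commutator boundedness with norm $\lesssim\|b\|_{\BMO}$.

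Given this boundedness, the reduction is as follows. Pick $b_k\in C_c^\infty(\R^n)$ with $\|b-b_k\|_{\BMO}\to0$, which is possible since $b\in\CMO$. Since $b\mapsto[T_\mathfrak{m},b]_{e_j}$ is linear, the operator norm bound gives
\begin{equation*}
\bigl\|[T_\mathfrak{m},b]_{e_j}-[T_\mathfrak{m},b_k]_{e_j}\bigr\|_{L^{p_1}\times\cdots\times L^{p_m}\to L^p}\lesssim\|b-b_k\|_{\BMO}\xrightarrow{k\to\infty}0.
\end{equation*}
Because compactness is preserved under operator-norm limits in the multilinear setting (a direct consequence of the equivalent characterizations of compactness in \cite{BenTor2013}), it is enough to show that $[T_\mathfrak{m},b]_{e_j}$ is compact whenever $b\in C_c^\infty(\R^n)$.

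For fixed $b\in C_c^\infty$, I would employ a multilinear Fr\'{e}chet--Kolmogorov--Riesz criterion in $L^p$: the image $\bigl\{[T_\mathfrak{m},b]_{e_j}(\vec f):\|f_i\|_{p_i}\le1\bigr\}$ is precompact provided it is (i) uniformly bounded in $L^p$, (ii) $L^p$-equicontinuous under translations, and (iii) uniformly small outside a large ball. Item (i) follows from the first stage. For (ii) and (iii), one writes
\begin{equation*}
[T_\mathfrak{m},b]_{e_j}(\vec f)(x)=\int_{(\R^n)^m}\bigl(b(x)-b(y_j)\bigr)K(x,\vec y)\prod_{i=1}^m f_i(y_i)\,d\vec y,
\end{equation*}
where $K$ is the distributional kernel of $T_\mathfrak{m}$. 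The factor $b(x)-b(y_j)$ is controlled by $\|\nabla b\|_\infty|x-y_j|$, supplying the gain needed to tame the singularity of $K$, while the compact support of $b$ forces decay at infinity for (iii).

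The main obstacle will be item (ii), the equicontinuity: one must show an estimate of the form
\begin{equation*}
\bigl\|\tau_h[T_\mathfrak{m},b]_{e_j}(\vec f)-[T_\mathfrak{m},b]_{e_j}(\vec f)\bigr\|_{L^p}\lesssim|h|^\delta\prod_{i=1}^m\|f_i\|_{L^{p_i}},\qquad|h|\to0,
\end{equation*}
for some $\delta>0$, uniformly in the $f_i$. This is delicate because $\mathfrak{m}\in\mathcal{W}^s$ only provides Sobolev-type (rather than H\"{o}lder-type) smoothness of $K$, so a Littlewood--Paley decomposition $\mathfrak{m}=\sum_k\mathfrak{m}_k$ is natural: for low frequencies the regularity of $b$ yields a clean gain in $|h|$, while for high frequencies one uses the uniform $W^s$-control of $\mathfrak{m}_k$ via Sobolev embedding, splitting the integral into a close-range part (handled by the smoothness of $b$) and a far-range part (handled by the kernel estimates). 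Summing the dyadic contributions, interpolation between these two bounds yields the desired $|h|^\delta$ gain and completes the verification.
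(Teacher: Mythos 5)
This theorem is not proved in the paper at all: it is imported verbatim from \cite[Theorem 5.9]{CaoOlivoYabuta2022} (with the bilinear case going back to \cite{Hu2017}), and the present paper only uses its unweighted form as a black box to feed into Theorem~\ref{thm:main_result_limited_range}. So there is no in-paper proof to compare against; what can be assessed is whether your outline would actually deliver the result. Your three-stage scheme --- commutator boundedness with operator norm $\lesssim\Norm{b}{\BMO}$, reduction to $b\in C_c^\infty$ via $\CMO$-approximation and stability of compactness under operator-(quasi-)norm limits, then a Fr\'echet--Kolmogorov--Riesz verification --- is exactly the strategy used in the cited sources, and the reduction step is sound (note only that $p$ may be less than $1$ here, so one works with the quasi-norm on $L^p$; the $\eps$-net argument survives this).

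The gap is that the two genuinely hard verifications are announced rather than carried out, and both hinge on quantitative kernel estimates that must be extracted from the hypothesis $\mathfrak{m}\in\mathcal{W}^{s}(\R^{nm})$ with $s>mn/2$; this is precisely where the restriction $t_j<p_j$, $\sum_j 1/t_j=s/n$ enters, and your sketch never uses it. For item (iii), when $x$ is far from $\supp b$ the commutator reduces to $-T_{\mathfrak{m}}(f_1,\ldots,bf_j,\ldots,f_m)(x)$ with only the $j$-th input compactly supported, so the required decay must come from off-support bounds on the dyadic kernels $K_k=(\mathfrak{m}_k)^{\vee}$; Plancherel plus the uniform $W^{s}$ control gives $\Norm{(1+2^{k}|\cdot|)^{s}K_k}{L^2}\lesssim 2^{kmn/2}\Norm{\mathfrak{m}}{\mathcal{W}^s}$, and since $s$ may be only barely larger than $mn/2$ the resulting spatial decay is borderline --- summing the dyadic pieces and closing the $L^p$ estimate at infinity is where the limited range $t_j<p_j$ is actually consumed. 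The same dyadic bounds are what must power the translation-continuity estimate in (ii). Your proposal correctly identifies the Littlewood--Paley decomposition as the right tool but does not exhibit the low-/high-frequency bounds whose interpolation is supposed to produce the $|h|^{\delta}$ gain, nor verify that the dyadic sum converges on the stated exponent range. As it stands the proposal is a faithful road map of the known proof, not a proof.
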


Our goal consists in proving a version of \cite[Theorem 5.9]{CaoOlivoYabuta2022} for variable exponent weighted Lebesgue spaces in the bilinear case. To achieve that, we first to study the boundedness over such spaces. To that end, we adapt the proof of \cite[Theorem 2.4]{CHSW2025}, using some estimates from \cite{Li_Sun_2013}. We first need to study the boundedness of $T_{\mathfrak{m}}$ itself.

\begin{theorem}\label{thm:bound_FourMult}
Assume $\frac{mn}{2}<s\leq mn$ and $\mathfrak{m}\in\mathcal{W}^{s}(\R^{nm})$. Then, there exists a constant $p_0\in\left(\frac{mn}{s},\infty\right)$ depending only on $m,n$ and $s$, such that the following holds. For all $r_0\in[p_0,\infty)$, for all proper $m$-admissible quadruples $(\vec{p}(\cdot),\vec{p}(\cdot),\vec{r},\infty)$ with $r_j := r_0$, $j = 1, \ldots, m$, and for all $\vec{w}\in\mathcal{A}_{(\vec{p}(\cdot),\vec{p}(\cdot)),(\vec{r},\infty)}$, we have that $T_{\mathfrak{m}}$ maps $L^{p_1(\cdot)}(w_1)\times\cdots\times L^{p_m(\cdot)}(w_m)$ into $L^{p(\cdot)}(\nu_{\vec{w}})$ boundedly.
\end{theorem}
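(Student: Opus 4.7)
The plan is to adapt the sharp maximal function strategy used for Theorems \ref{thm:bound_bil_CZO_1} and \ref{thm:bound_bil_frac}, substituting the Li--Sun pointwise bound in place of the Lu--Zhang or Wang--Zhang estimates. The scheme is: pointwise control of $M_\delta^\#(T_{\mathfrak m}(\vec f))$ by a product of rescaled Hardy--Littlewood maximal functions, a Fefferman--Stein inequality on $L^{p(\cdot)}(\nu_{\vec w})$, generalized H\"older in the variable exponent Lebesgue scale, and finally the factorization of the weight class via Lemma~\ref{lem:Apqrs char.}, reducing to scalar weighted boundedness of $M$.

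\textbf{Step 1 (pointwise bound).} I would invoke the key estimate of Li--Sun \cite{Li_Sun_2013}: there exists $p_0\in(mn/s,\infty)$ depending only on $m,n,s$ such that, for any $0<\delta<1/m$ and any $r_0\ge p_0$,
\begin{equation*}
    M_\delta^{\#}(T_{\mathfrak m}(\vec f))(x)\lesssim \prod_{j=1}^{m}M_{r_0}(f_j)(x),
\end{equation*}
for all bounded, compactly supported $\vec f=(f_1,\ldots,f_m)$. The threshold $p_0$ is dictated by the Sobolev regularity $s$; the condition $r_0\ge p_0$ is compatible with proper $m$-admissibility because $r_0=r_j<(p_j)_-$ and we may enlarge $r_0$ freely.

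\textbf{Step 2 (Fefferman--Stein and H\"older).} From Lemma~\ref{lem:Apqrs char.} combined with Lemma~\ref{lem:rescale}, the assumption $\vec w\in\mathcal A_{(\vec p(\cdot),\vec p(\cdot)),(\vec r,\infty)}$ yields both $\nu_{\vec w}\in\mathcal A_{p(\cdot),(r,\infty)}$ and $w_j\in\mathcal A_{p_j(\cdot),(r_0,\infty)}$ for every $j$. The former grants a Fefferman--Stein inequality
\begin{equation*}
    \|T_{\mathfrak m}(\vec f)\|_{L^{p(\cdot)}(\nu_{\vec w})}\lesssim \|M_\delta(T_{\mathfrak m}(\vec f))\|_{L^{p(\cdot)}(\nu_{\vec w})}\lesssim \|M_\delta^{\#}(T_{\mathfrak m}(\vec f))\|_{L^{p(\cdot)}(\nu_{\vec w})},
\end{equation*}
whose left-hand side is a priori finite for truncated $\vec f$ by the classical unweighted $L^{q_1}\times\cdots\times L^{q_m}\to L^{q}$ mapping property of $T_{\mathfrak m}$ under $\mathfrak m\in\mathcal W^s$. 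Combining with Step~1 and then applying the generalized H\"older inequality (Lemma~\ref{Gen. Holder's ineq.}) with respect to the identity $\tfrac{1}{p(\cdot)}=\sum_{j}\tfrac{1}{p_j(\cdot)}$ produces
\begin{equation*}
    \|T_{\mathfrak m}(\vec f)\|_{L^{p(\cdot)}(\nu_{\vec w})}\lesssim \prod_{j=1}^{m}\|M_{r_0}(f_j)\|_{L^{p_j(\cdot)}(w_j)}.
\end{equation*}

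\textbf{Step 3 (reduction to scalar $M$).} By the homogeneity identity of Lemma~\ref{lem:homog},
\begin{equation*}
    \|M_{r_0}(f_j)\|_{L^{p_j(\cdot)}(w_j)}=\|M(|f_j|^{r_0})\|_{L^{p_j(\cdot)/r_0}(w_j^{r_0})}^{1/r_0}.
\end{equation*}
Since Lemma~\ref{lem:rescale} transforms $w_j\in\mathcal A_{p_j(\cdot),(r_0,\infty)}$ into $w_j^{r_0}\in\mathcal A_{p_j(\cdot)/r_0}$, and $p_j(\cdot)/r_0\in\mathscr P\cap\mathrm{LH}$ (proper $m$-admissibility yields $(p_j)_->r_0$, while log-H\"older is preserved under positive scaling), the Cruz-Uribe--Fiorenza--Neugebauer theorem gives $M:L^{p_j(\cdot)/r_0}(w_j^{r_0})\to L^{p_j(\cdot)/r_0}(w_j^{r_0})$. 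Unwinding the rescaling returns $\prod_{j}\|f_j\|_{L^{p_j(\cdot)}(w_j)}$, and a standard density argument removes the truncation.

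\textbf{Main obstacle.} The non-routine point is extracting from \cite{Li_Sun_2013} the pointwise bound exactly in the form above, with a threshold $p_0$ depending only on the structural parameters $m,n,s$ (and not on the variable exponents or the weights). A secondary technicality is setting up the Fefferman--Stein inequality in the weighted variable Lebesgue scale so that its a priori finiteness assumption is verified via the unweighted multilinear boundedness of $T_{\mathfrak m}$ and a truncation argument; however, both issues are familiar from the proofs of \cite[Theorem 2.4]{CHSW2025} and Theorem~\ref{thm:bound_bil_CZO_2}, so no new ideas beyond careful bookkeeping of constants should be required.
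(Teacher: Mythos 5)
Your overall architecture (sharp maximal function, Fefferman--Stein, rescaling by $r_0$) matches the paper's, but Steps 1--3 contain a genuine gap at the point where you replace the multilinear maximal operator by a \emph{product of scalar} maximal functions. The Li--Sun estimate gives
\begin{equation*}
    M^{\#}_{\delta}(T_{\mathfrak m}(\vec f))\lesssim \mathcal{M}(|f_1|^{p_0},\ldots,|f_m|^{p_0})^{1/p_0},
\end{equation*}
and the paper keeps this multilinear operator intact: after a Jensen step to pass from $p_0$ to $r_0$ and the rescaling of Lemma~\ref{lem:homog}/Lemma~\ref{lem:rescale}, it invokes the bound for $\mathcal{M}$ on $\prod_j L^{p_j(\cdot)/r_0}(w_j^{r_0})\to L^{p(\cdot)/r_0}(\nu_{\vec w}^{r_0})$ from \cite[Theorem 2.4]{CG2020}, which is a theorem about exactly the multilinear class $\mathcal{A}_{\vec p(\cdot)/r_0}$. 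You instead dominate by $\prod_j M_{r_0}(f_j)$, split the norm with generalized H\"older, and then need the \emph{scalar} maximal operator to be bounded on each $L^{p_j(\cdot)/r_0}(w_j^{r_0})$, i.e.\ you need $w_j\in\mathcal{A}_{p_j(\cdot),(r_0,\infty)}$ for every $j$. This does not follow from $\vec w\in\mathcal{A}_{(\vec p(\cdot),\vec p(\cdot)),(\vec r,\infty)}$: Lemma~\ref{lem:Apqrs char.} only yields $w_j\in\mathcal{A}_{p_j(\cdot),(r_j,\sigma_j)}$ with $\tfrac{1}{\sigma_j}=\tfrac{1}{r_0}-\tfrac{m}{r_0}<0$, which is a strictly weaker condition than membership in $\mathcal{A}_{p_j(\cdot),(r_0,\infty)}$ (by H\"older on a cube one has $\mathcal{A}_{p_j(\cdot),(r_0,\infty)}\subseteq\mathcal{A}_{p_j(\cdot),(r_0,\sigma_j)}$, not the reverse). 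This is the standard multilinear phenomenon going back to \cite{LOPTT09}: a tuple in the multilinear Muckenhoupt class need not have components in the corresponding scalar classes, so your Step 3 breaks down for general admissible $\vec w$.

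The repair is precisely to not discard the multilinear structure: retain $\mathcal{M}(|f_1|^{r_0},\ldots,|f_m|^{r_0})^{1/r_0}$ after Jensen, rescale the whole expression at once, and apply the multilinear maximal function theorem of \cite{CG2020} under $\vec w^{r_0}\in\mathcal{A}_{\vec p(\cdot)/r_0}$, which \emph{is} guaranteed by Lemma~\ref{lem:rescale}. Your Step 2 (the Fefferman--Stein inequality with a priori finiteness via the unweighted boundedness of $T_{\mathfrak m}$ and truncation) is fine and is handled the same way in the paper by reference to the proof of \cite[Theorem 2.3]{CHSW2025}; also note the admissible range for $\delta$ is $0<\delta<p_0/m$ rather than $0<\delta<1/m$.
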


\begin{proof}
By \cite[Lemma 2.6]{Li_Sun_2013} we have that there exists a constant $p_0\in(1,\infty)$ depending only on $m, n$ and $s$, such that for all bounded compactly supported functions $f_1,\ldots,f_m$ one has
\begin{equation*}
    M^{\#}_{\delta}(T_{\mathfrak{m}}(\vec{f})) \lesssim_{\delta} \mathcal{M}(|f_1|^{p_0},\ldots,|f_m|^{p_0})^{1/p_0},
\end{equation*}
for all $0<\delta<\frac{p_0}{m}$.
    
Now we fix such a constant $0<\delta<\frac{p_0}{m}$. Let $r_0\in[p_0,\infty)$, $(\vec{p}(\cdot),\vec{p}(\cdot),\vec{r},\infty)$ be a proper $m$-admissible quadruple with $r_j := r_0$, $j = 1, \ldots, m$, and let $\vec{w}\in\mathcal{A}_{(\vec{p}(\cdot),\vec{p}(\cdot)),(\vec{r},\infty)}$. Set $w:=\nu_{\vec{w}}$. Since $p_0\leq r_0$, by Jensen's inequality we have
\begin{align*}
    \mathcal{M}(|f_1|^{p_0},\ldots,|f_m|^{p_0})^{1/p_0} \leq \mathcal{M}(|f_1|^{r_0},\ldots,|f_m|^{r_0})^{1/r_0}.
\end{align*}
Thus, using Lemma \ref{lem:homog} we can estimate
\begin{align*}
    &\Vert \mathcal{M}(|f_1|^{p_0},\ldots,|f_m|^{p_0})^{1/p_0}\Vert_{L^{p(\cdot)}(w)}
    \leq \Vert \mathcal{M}(|f_1|^{r_0},\ldots,|f_m|^{r_0})^{1/r_0}\Vert_{L^{p(\cdot)}(w)}\\
    &= \Vert \mathcal{M}(|f_1|^{r_0},\ldots,|f_m|^{r_0})^{1/r_0}w\Vert_{L^{p(\cdot)}}
    = \Vert \mathcal{M}(|f_1|^{r_0},\ldots,|f_m|^{r_0})w^{r_0}\Vert_{L^{p(\cdot)/r_0}}^{1/r_0}\\
    &= \Vert \mathcal{M}(|f_1|^{r_0},\ldots,|f_m|^{r_0})\Vert_{L^{p(\cdot)/r_0}(w^{r_0})}^{1/r_0}.
\end{align*}
Since $\vec{w}\in\mathcal{A}_{(\vec{p}(\cdot),\vec{p}(\cdot)),(\vec{r},\infty)}$, by Lemma~\ref{lem:rescale} we have $\vec{w}^{r_0}\in\mathcal{A}_{(\vec{p}(\cdot)/r_0,\vec{p}(\cdot)/r_0),(\vec{r}/r_0,\infty)}$, that is $\vec{w}^{r_0}\in\mathcal{A}_{\vec{p}(\cdot)/r_0}$. Thus, using the bound for $\mathcal{M}$ from \cite[Theorem 2.4]{CG2020} coupled with \cite[Remark 2.7]{CG2020} we deduce
\begin{align*}
    \Vert \mathcal{M}(|f_1|^{r_0},\ldots,|f_m|^{r_0})\Vert_{L^{p_j(\cdot)/r_0}(w^{r_0})}^{1/r_0} \lesssim \prod_{j=1}^{m}\Vert |f_j|^{r_0}\Vert_{L^{p_j(\cdot)/r_0}(w_j^{r_0})}^{1/r_0}
    =\prod_{j=1}^{m}\Vert f_j\Vert_{L^{p(\cdot)}(w_j)}.
\end{align*}
The rest of the proof proceeds exactly as in the proof of \cite[Theorem 2.3]{CHSW2025}; we omit the details.
\end{proof}

\begin{remark}
According to \cite[Lemma 2.6]{Li_Sun_2013}, the constant $p_0$ can have the following form:
\begin{equation*}
    p_0 = t\cdot \frac{mn}{s},
\end{equation*}
    where the constant $t$ satisfies
\begin{equation*}
    1 < t < \min\left\{t_1,\ldots,t_{m}, \frac{s}{s-1}, \frac{2s}{mn}\right\}
\end{equation*}
for some $1<t_1,\ldots,t_m<\infty$ that can be chosen without any additional restrictions.
\end{remark}

Next, we obtain the following weighted boundedness of the commutators $[T_{\mathfrak{m}},b]_{e_j}$.

\begin{theorem}\label{thm:bound_FourMult_com}
Let $b\in\mathrm{BMO}(\R^n)$. Assume $\frac{mn}{2}<s\leq mn$ and $\mathfrak{m}\in\mathcal{W}^{s}(\R^{nm})$. Consider the constant $p_0\in\left(\frac{mn}{s},\infty\right)$ depending only on $m,n$ and $s$ from Theorem~\ref{thm:bound_FourMult}. Then, the following holds. For all $r_0\in(p_0,\infty)$, for all proper $m$-admissible quadruples $(\vec{p}(\cdot),\vec{p}(\cdot),\vec{r},\infty)$ with $r_j := r_0$, $j = 1, \ldots, m$, and for all $\vec{w}\in\mathcal{A}_{(\vec{p}(\cdot),\vec{p}(\cdot)),(\vec{r},\infty)}$, we have that $[T_{\mathfrak{m}},b]_{e_j}$ maps $L^{p_1(\cdot)}(w_1)\times\cdots\times L^{p_m(\cdot)}(w_m)$ into $L^{p(\cdot)}(\nu_{\vec{w}})$ boundedly, for all $j=1,\ldots,m$.
\end{theorem}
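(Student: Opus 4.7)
The plan is to adapt the strategy used in Theorem~\ref{thm:bound_bil_frac} (and ultimately \cite[Theorem 2.4]{CHSW2025}) to the Fourier-multiplier setting, using the analogue of Lemma 2.6 of \cite{Li_Sun_2013} for the commutator. Concretely, I would first invoke a pointwise sharp maximal estimate of the form
\begin{equation*}
    M^{\#}_{\delta}([T_{\mathfrak{m}},b]_{e_j}(\vec{f}))(x) \lesssim \|b\|_{\mathrm{BMO}(\R^n)}\Big(M_{\varepsilon}(T_{\mathfrak{m}}(\vec{f}))(x) + \mathcal{M}_{\mathrm{L(\log L)}}(|f_1|^{p_0},\ldots,|f_m|^{p_0})(x)^{1/p_0}\Big),
\end{equation*}
valid for any $0<\delta<\varepsilon$ sufficiently small compared to $p_0/m$; such an estimate is in the same spirit as the ones in \cite{Li_Sun_2013,Hu2017}, with the constant $p_0$ being exactly the one already produced by Theorem~\ref{thm:bound_FourMult}.

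Second, I would exploit a Fefferman--Stein-type inequality in the weighted variable exponent setting to pass from $[T_{\mathfrak{m}},b]_{e_j}(\vec{f})$ to its sharp maximal function. The point is that thanks to $\vec{w}\in\mathcal{A}_{(\vec{p}(\cdot),\vec{p}(\cdot)),(\vec{r},\infty)}$, the weight $\nu_{\vec{w}}$ lies in an appropriate $A_{\infty}$-type class for $L^{p(\cdot)}$, so one can bound $\|g\|_{L^{p(\cdot)}(\nu_{\vec{w}})}\lesssim \|M^{\#}_{\delta}g\|_{L^{p(\cdot)}(\nu_{\vec{w}})}$ for $g=[T_{\mathfrak{m}},b]_{e_j}(\vec{f})$ (this type of inequality is what was used in \cite[Theorem 2.4]{CHSW2025}, and can be adapted by the standard approximation/truncation scheme in the variable exponent framework).

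Third, I would bound the two resulting summands separately. For $M_{\varepsilon}(T_{\mathfrak{m}}(\vec{f}))$, choose $\varepsilon>0$ so small that $M_{\varepsilon}$ is bounded on $L^{p(\cdot)}(\nu_{\vec{w}})$ (this amounts to $\nu_{\vec{w}}^{\varepsilon}\in\mathcal{A}_{p(\cdot)/\varepsilon}$, which follows from Lemma~\ref{lem:rescale} and the definition of $\mathcal{A}_{(\vec{p}(\cdot),\vec{p}(\cdot)),(\vec{r},\infty)}$ after possibly shrinking $\varepsilon$), and then invoke Theorem~\ref{thm:bound_FourMult} to absorb $T_{\mathfrak{m}}$. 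For $\mathcal{M}_{\mathrm{L(\log L)}}(|f_1|^{p_0},\ldots,|f_m|^{p_0})^{1/p_0}$, I would rescale by $p_0$ using Lemma~\ref{lem:homog}, then note (as in Theorem~\ref{thm:bound_FourMult}) that since $r_0 > p_0$, a Jensen-type comparison bounds $\mathcal{M}_{\mathrm{L(\log L)}}(\cdot)$ by a small power-bump of $\mathcal{M}(\cdot)$, reducing matters to the multilinear maximal operator on $L^{p(\cdot)/r_0}(\nu_{\vec{w}}^{r_0})$, which is controlled via $\vec{w}^{r_0}\in\mathcal{A}_{\vec{p}(\cdot)/r_0}$ together with the bound for $\mathcal{M}$ from \cite[Theorem 2.4]{CG2020}.

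The main obstacle will be establishing the pointwise commutator estimate in a form adapted to our $p_0$ and verifying that the $L(\log L)$ maximal operator is indeed controlled by a slightly stronger power-bump of $\mathcal{M}$ in the weighted variable-exponent norm under the $(\vec{r},\infty)$-rescaled Muckenhoupt condition; the strict inequality $r_0>p_0$ in the statement precisely provides the gap needed to perform this comparison. Once that is handled, assembling the bounds via H\"older's inequality in variable exponents (Lemma~\ref{Gen. Holder's ineq.}) delivers the desired conclusion.
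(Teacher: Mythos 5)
Your proposal follows essentially the same route as the paper: the pointwise sharp maximal estimate for the commutator from \cite{Li_Sun_2013}, a Fefferman--Stein passage in the weighted variable exponent setting as in \cite[Theorem 2.4]{CHSW2025}, Theorem~\ref{thm:bound_FourMult} to absorb $T_{\mathfrak{m}}$, and the rescaling Lemmata~\ref{lem:homog}, \ref{lem:Apqrs char.}, \ref{lem:rescale} together with the $\mathcal{M}$-bound of \cite[Theorem 2.4]{CG2020}. Two details need adjustment. First, the pointwise estimate you quote, with $\mathcal{M}_{\mathrm{L(\log L)}}(|f_1|^{p_0},\ldots,|f_m|^{p_0})^{1/p_0}$, is a \emph{stronger} majorization than what \cite[Lemma 2.7]{Li_Sun_2013} provides; the available estimate controls $M^{\#}_{\delta}([T_{\mathfrak{m}},b]_{e_j}(\vec{f}))$ by $\Vert b\Vert_{\BMO}\bigl(M_{\varepsilon}(T_{\mathfrak{m}}(\vec f))+\mathcal{M}(|f_1|^{r_0},\ldots,|f_m|^{r_0})^{1/r_0}\bigr)$ for $0<\delta<\varepsilon<p_0/m$ and any $r_0>p_0$ --- which is exactly the weaker consequence you end up using anyway, so you should quote that form directly and skip the $L(\log L)$ detour. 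Second, for the term $M_{\varepsilon}(T_{\mathfrak{m}}(\vec f))$ you propose to \emph{shrink} $\varepsilon$ until $\nu_{\vec w}^{\varepsilon}\in\mathcal{A}_{p(\cdot)/\varepsilon}$; but the hypothesis $\vec w\in\mathcal{A}_{(\vec p(\cdot),\vec p(\cdot)),(\vec r,\infty)}$ places $\nu_{\vec w}$ (via Lemma~\ref{lem:Apqrs char.} and Lemma~\ref{lem:rescale}) in the class at level $r_0/m$, not at smaller levels, and the pointwise lemma forces $\varepsilon<p_0/m<r_0/m$, so descending would require an unproven monotonicity of the variable weight classes. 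The paper instead goes \emph{up}: by Jensen, $M_{\varepsilon}(f)\leq M_{r_0/m}(f)$, and then $\Vert M_{r_0/m}(f)\Vert_{L^{p(\cdot)}(\nu_{\vec w})}=\Vert M(|f|^{r_0/m})\Vert^{m/r_0}_{L^{mp(\cdot)/r_0}(\nu_{\vec w}^{r_0/m})}\lesssim\Vert f\Vert_{L^{p(\cdot)}(\nu_{\vec w})}$ using $\nu_{\vec w}^{r_0/m}\in\mathcal{A}_{mp(\cdot)/r_0}$. With these two corrections your argument coincides with the paper's.
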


\begin{proof}
Fix $j\in\{1,\ldots,m\}$. By \cite[Lemma 2.7]{Li_Sun_2013} we have that for all bounded compactly supported functions $f_1,\ldots,f_m$ one has
\begin{equation*}
    M^{\#}_{\delta}([T_{\mathfrak{m}},b]_{e_j}(\vec{f})) \lesssim_{\delta,\varepsilon,r_0}\Vert b\Vert_{\mathrm{BMO}(\R^n)}(M_{\varepsilon}(T_{\mathfrak{m}}(\vec{f}))+\mathcal{M}(|f_1|^{r_0},\ldots,|f_m|^{r_0})^{1/r_0}),
\end{equation*}
for all $0<\delta<\varepsilon<\frac{p_0}{m}$ and all $r_0>p_0$.
    
Let us now fix such constants $0<\delta<\varepsilon<\frac{p_0}{m}$. Let $r_0\in(p_0,\infty)$, $(\vec{p}(\cdot),\vec{p}(\cdot),\vec{r},\infty)$ be a proper $m$-admissible quadruple with $r_j := r_0$, $j = 1, \ldots, m$, and let $\vec{w}\in\mathcal{A}_{(\vec{p}(\cdot),\vec{p}(\cdot)),(\vec{r},\infty)}$. Let $f$ be any ``nice'' function. Since $\varepsilon < \frac{r_0}{m}$, Jensen's inequality yields
\begin{equation*}
    M_{\varepsilon}(f) \leq M_{r_0/m}(f).
\end{equation*}
Thus, using Lemma \ref{lem:homog} we estimate
\begin{align*}
    \Vert M_{\varepsilon}(f)\Vert_{L^{p(\cdot)(w)}} &\leq \Vert M_{r_0/m}(f)\Vert_{L^{p(\cdot)(w)}}=\Vert M(|f|^{r_0/m})w^{r_0/m}\Vert^{m/r_0}_{L^{mp(\cdot)/r_0}}\\
    &= \Vert M(|f|^{r_0/m})\Vert^{m/r_0}_{L^{mp(\cdot)/r_0}(w^{r_0/m})}.
\end{align*}
By Lemma~\ref{lem:Apqrs char.} we have
\begin{equation*}
    w \in \mathcal{A}_{(p(\cdot), p(\cdot)),(r_0/m,\infty)},
\end{equation*}
therefore by Lemma~\ref{lem:rescale} we deduce
\begin{equation*}
    w^{r_0/m} \in \mathcal{A}_{(mp(\cdot)/r_0, mp(\cdot)/r_0),((mr_0)/(mr_0),\infty)},
\end{equation*}
that is
\begin{equation*}
    w^{r_0/m} \in \mathcal{A}_{mp(\cdot)/r_0}.
\end{equation*}
Therefore, we obtain
\begin{equation*}
    \Vert M_{r_0/m}(f)\Vert_{L^{p(\cdot)(w)}}
    \lesssim \Vert |f|^{r_0/m}\Vert^{m/r_0}_{L^{mp(\cdot)/r_0}(w^{r_0/m})}
    =\Vert f\Vert_{L^{p(\cdot)}(w)}.
\end{equation*}
Moreover, in the exact same way as in the proof of Theorem~\ref{thm:bound_FourMult} above we have
\begin{equation*}
    \Vert \mathcal{M}(|f_1|^{r_0},\ldots,|f_m|^{r_0})^{1/r_0}\Vert_{L^{p(\cdot)}(w)} \lesssim \prod_{j=1}^{m}\Vert f_j\Vert_{L^{p(\cdot)}(w_j)}.
\end{equation*}
The rest of the proof proceeds exactly as in the proof of \cite[Theorem 2.4]{CHSW2025}; we omit the details.
\end{proof}

\begin{remark}
Both in \cite[Lemma 2.6]{Li_Sun_2013} and \cite[Lemma 2.7]{Li_Sun_2013} the symbol $\mathfrak{m}$ is assumed to satisfy a slightly stronger condition than $\mathfrak{m}\in\mathcal{W}^{s}(\R^{mn})$. However, an examination of the proofs of \cite[Lemma 2.6]{Li_Sun_2013} and \cite[Lemma 2.7]{Li_Sun_2013} shows that the condition $\mathfrak{m}\in\mathcal{W}^{s}(\R^{mn})$ is enough.
\end{remark}

Applying now our second abstract main result, Theorem~\ref{thm:main_result_limited_range}, we obtain the following.

\begin{theorem}\label{thm:compact_FourMult_com}
Let $\vec{b}=(b_1,b_2)$ be a pair of function in $\mathrm{CMO}(\R^n)$. Assume $n<s\leq 2n$ and $\mathfrak{m}\in\mathcal{W}^{s}(\R^{2n})$. Consider the constant $p_0\in\left(\frac{2n}{s},\infty\right)$ depending only on $n$ and $s$ from Theorems~\ref{thm:bound_FourMult} and~\ref{thm:bound_FourMult_com} (for $m=2$). Then, the following holds. For all $r_0\in(p_0,\infty)$, for all proper $2$-admissible quadruples $(\vec{p}(\cdot),\vec{p}(\cdot),\vec{r},\infty)$ with $r_j := r_0$, $j = 1, 2$, and for all $\vec{w}\in\mathcal{A}_{(\vec{p}(\cdot),\vec{p}(\cdot)),(\vec{r},\infty)}$, we have that $[T_{\mathfrak{m}},b_1]_{e_1}$ and $[T_{\mathfrak{m}},b_2]_{e_2}$ map $L^{p_1(\cdot)}(w_1)\times\cdots\times L^{p_m(\cdot)}(w_m)$ into $L^{p(\cdot)}(\nu_{\vec{w}})$ compactly, for all $j=1,2$. In particular, $[T_{\mathfrak{m}},\vec{b}]_{\Sigma}$ maps $L^{p_1(\cdot)}(w_1)\times\dots\times L^{p_m(\cdot)}(w_m)$ into $L^{p(\cdot)}(\nu_{\vec{w}})$ compactly.
\end{theorem}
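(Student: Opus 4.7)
The plan is to mimic the structure of the proofs for the commutators of bilinear $\omega$-Calder\'on--Zygmund operators and bilinear fractional integrals, but now invoking the limited range extrapolation Theorem~\ref{thm:main_result_limited_range} instead of Theorem~\ref{thm:main_result}. We apply Theorem~\ref{thm:main_result_limited_range} separately to each of the operators $[T_{\mathfrak{m}},b_1]_{e_1}$ and $[T_{\mathfrak{m}},b_2]_{e_2}$, with the parameters $\gamma=0$, $\vec{r}=(r_0,r_0)$ and $s=\infty$. The compactness of $[T_{\mathfrak{m}},\vec{b}]_{\Sigma}$ then follows at once, since the sum of bilinear compact operators is bilinear compact by \cite[Proposition 2]{BenTor2013}.

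For the boundedness hypothesis~\eqref{eq:main1_limited_range} of Theorem~\ref{thm:main_result_limited_range}, we simply quote Theorem~\ref{thm:bound_FourMult_com} (for $m=2$), which asserts exactly that $[T_{\mathfrak{m}},b_j]_{e_j}$ is bounded $L^{p_{0,1}(\cdot)}(w_{0,1})\times L^{p_{0,2}(\cdot)}(w_{0,2})\to L^{q_0(\cdot)}(\nu_{\vec{w}_0})$ for every proper 2-admissible quadruple $(\vec{p}_0(\cdot),q_0(\cdot),\vec{r},\infty)$ with $\frac{1}{p_0(\cdot)}-\frac{1}{q_0(\cdot)}=0=\gamma$ and every $\vec{w}_0\in\mathcal{A}_{(\vec{p}_0(\cdot),q_0(\cdot)),(\vec{r},\infty)}$. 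Note that the additional requirement $(q_0)_{-}>1$ in Theorem~\ref{thm:main_result_limited_range} is not used to invoke Theorem~\ref{thm:bound_FourMult_com}, but of course it is not violated either, so the hypothesis is verified.

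For the compactness hypothesis~\eqref{eq:main2_limited_range}, we have to exhibit a single proper 2-admissible quadruple $(\vec{p}_1(\cdot),q_1(\cdot),\vec{r},\infty)$ and a single pair of weights $\vec{w}_1$ satisfying all the requirements. We will take $\vec{w}_1:=(1,1)$, constant exponents $p_{1,1}(\cdot)=p_{1,2}(\cdot)=d_1$ and $q_1(\cdot)=d_1/2$, where $d_1$ is to be chosen. Since $n<s\leq 2n$, the relation $\frac{s}{n}=\frac{1}{t_1}+\frac{1}{t_2}$ is solvable with $1\leq t_1,t_2<2$, so fix such $t_1,t_2$. Pick then
\[
d_1>\max\{r_0,\,2,\,t_1,\,t_2\},
\]
which is clearly possible. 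The condition $d_1>r_0$ makes the quadruple 2-admissible; the condition $d_1>2$ ensures $(q_1)_{-}=d_1/2>1$; and $\gamma=0$ is automatic. A direct computation using $\|\chi_Q\|_{p}=|Q|^{1/p}$ for constant $p$ shows
\[
|Q|^{-2/r_0}\|\chi_Q\|_{d_1/2}\prod_{j=1}^{2}\|\chi_Q\|_{(1/r_0-1/d_1)^{-1}}=|Q|^{-2/r_0+2/d_1+2(1/r_0-1/d_1)}=1,
\]
so $(1,1)\in\mathcal{A}_{(\vec{p}_1(\cdot),q_1(\cdot)),(\vec{r},\infty)}$. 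Finally, the choices $d_1>t_1,t_2$ and $d_1>2$ bring us exactly into the hypotheses of Theorem~\ref{thm:comp_FourMult_com_unw} (for $m=2$, $p_1=p_2=d_1$), which yields the compactness of $[T_{\mathfrak{m}},b_j]_{e_j}:L^{d_1}(\R^n)\times L^{d_1}(\R^n)\to L^{d_1/2}(\R^n)$.

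With both hypotheses of Theorem~\ref{thm:main_result_limited_range} in place, we conclude that $[T_{\mathfrak{m}},b_j]_{e_j}$ is compact $L^{p_1(\cdot)}(w_1)\times L^{p_2(\cdot)}(w_2)\to L^{p(\cdot)}(\nu_{\vec{w}})$ for every admissible data as in the statement, and a final appeal to \cite[Proposition 2]{BenTor2013} delivers the same conclusion for $[T_{\mathfrak{m}},\vec{b}]_{\Sigma}$. The main delicate point is the bookkeeping in the previous paragraph: one must choose $d_1$ (and $t_1,t_2$) so that \emph{all} the structural constraints of Theorem~\ref{thm:main_result_limited_range} (admissibility, $(q_1)_{-}>1$) and of the unweighted compactness input Theorem~\ref{thm:comp_FourMult_com_unw} ($t_j<d_1<\infty$, $d_1>2$) are simultaneously satisfied; everything else is a direct quotation.
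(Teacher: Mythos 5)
Your proposal is correct and follows essentially the same route as the paper's own proof: verify hypothesis~\eqref{eq:main1_limited_range} of Theorem~\ref{thm:main_result_limited_range} via Theorem~\ref{thm:bound_FourMult_com}, verify hypothesis~\eqref{eq:main2_limited_range} at a single constant-exponent, unweighted point supplied by Theorem~\ref{thm:comp_FourMult_com_unw}, and handle the sum commutator by \cite[Proposition 2]{BenTor2013}. The only (harmless) differences are that you verify the $\mathcal{A}_{(\vec{p}_1(\cdot),q_1(\cdot)),(\vec{r},\infty)}$ condition for the weight $(1,1)$ explicitly and impose $d_1>t_1,t_2$ directly, whereas the paper takes $t_1=t_2=2n/s$ and notes $r_0>p_0>2n/s$.
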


\begin{proof}
The assertion of the last sentence follows from the previous assertions, since the sum of bilinear compact operators remains compact per \cite[Proposition 2]{BenTor2013}. Thus, we concentrate on proving the rest of the theorem.

Fix $j\in\{1,2\}$, $r_0\in(p_0,\infty)$, a proper 2-admissible quadruple $(\vec{p}(\cdot),\vec{p}(\cdot),\vec{r},\infty)$ with $r_j := r_0$, $j = 1, 2$, and $\vec{w}\in\mathcal{A}_{(\vec{p}(\cdot),\vec{p}(\cdot)),(\vec{r},\infty)}$. To show that $[T_{\mathfrak{m}},b]_{e_j}$ maps $L^{p_1(\cdot)}(w_1)\times\cdots\times L^{p_m(\cdot)}(w_m)$ into $L^{p(\cdot)}(\nu_{\vec{w}})$ compactly, we apply Theorem~\ref{thm:main_result_limited_range}.

Assumption \eqref{eq:main1_limited_range} of Theorem~\ref{thm:main_result_limited_range} for $[T_{\mathfrak{m}},b]_{e_j}$ is just the content of Theorem~\ref{thm:bound_FourMult_com} for $m=2$ (with different notation).
    
We check assumption \eqref{eq:main2_limited_range} of Theorem~\ref{thm:main_result_limited_range}. Set
\begin{equation*}
    t_1 = t_2 := t_0 := \frac{2n}{s}.
\end{equation*}
Then, we have $1\leq t_1, t_2<2$ and
\begin{equation*}
    \frac{1}{t_1}+\frac{1}{t_2} = \frac{s}{n}.
\end{equation*}
Observe that $r_0 > p_0 > t_0\geq1$. Pick $d_0 > \max\{2,r_0\}$.
Then, Theorem~\ref{thm:comp_FourMult_com_unw} yields that part \eqref{eq:main2_limited_range} of the assumptions of Theorem~\ref{thm:main_result_limited_range} is satisfied with the particular choices $\vec{p}_1(\cdot):=(d_0,d_0)$, $p_1(\cdot):=\frac{d_0}{2}$ and $\vec{w}_1:=(1,1)$ for each of the operators $[ T_{m},b_1]_{e_1}$ and $[T_{\mathfrak{m}},b_2]_{e_2}$ .

An application of Theorem~\ref{thm:main_result_limited_range} concludes the proof.
\end{proof}

\printbibliography

\end{document}